\theoremstyle{definition}
\newtheorem{thm}{Theorem}
\newtheorem{lem}{Lemma}
\newtheorem{dfn}{Definition}
\newtheorem{exm}{Example}
\newtheorem{prp}{Proposition}
\newtheorem{cnj}{Conjecture}
\newcommand{\Z}{\mathbb{Z}}
\newcommand{\R}{\mathbb{R}}
\newcommand{\im}{\text{im}}
\author{Cole Hugelmeyer} 
\title{Diagram Systems and Generalized Finite Type Theories}
\begin{document}

\maketitle 

\begin{abstract}
We present a category theoretical generalization of the Goussarov theorem for finite type invariants, relating generating sets for generalized finite type theories with diagrams systems for the corresponding topological objects. We will demonstrate this correspondence through a few examples including the standard finite type theory and its relationship with clasp diagrams, the finite type theory of delta moves and a new diagram system called looms, and the finite type theory of combinatorial structures we call virtual transverse knots. The finite type theory of delta moves may have applications to unknotting number, and the theory of virtual transverse knots leads to many interesting and difficult conjectures. 
\end{abstract}

\section{Introduction}

In this paper, we establish a relationship between generalized finite type theories of knot-like structures and diagrammatic combinatorial systems for representing those structures. By inventing suitable new combinatorial diagram systems for representing knots and knot-like structures, we can compute the universal abelian groups of generalized finite type theories. In the existing literature, there are two prominent examples of this correspondence. Gauss diagrams correspond to the finite type theory of virtual knots, and clasp diagrams correspond to the classical finite type theory of ordinary knots \cite{virt} \cite{clasp}. In addition to building the beginnings of a generalized framework for this correspondence, we will add two new items to this list.
 
A \emph{delta move} is a transformation of knots where a strand passes through a clasp. The finite type theory of delta moves is a generalized finite type theory which is similar to the classical finite type theory of ordinary knots, except that delta moves play the role of crossing changes. Variations of the finite type theory of delta moves that we develop in this paper have been studied in the existing literature. In \cite{band}, it was shown that any delta-finite type invariant of rank $n$ which is additive under connect sum is also an ordinary finite type invariant of rank $2n$, and in \cite{doubdel}, a variation called doubled delta moves were studied, and the finite type theory was shown to not be finitely generated, as even the rank zero doubled delta move invariants fully classify knots up to $S$-equivalence. We will show that the finite type theory of delta moves relates to a diagram system we call \emph{looms}. We will prove that the universal abelian group of delta move finite type invariants of rank $n$ is finitely generated, and we will also make conjectures about the potential applications of these new invariants. In particular, looms are naturally graded by unknotting number, and this induces a natural filtration by unknotting number on the universal abelian group of rank $n$ delta move finite type invariants. If this filtration turns out to be nontrivial, these invariants could yield lower bounds for unknotting number.  
  
Second, we will introduce the notion of a \emph{virtual transverse knot} and we will construct a finite type theory for these objects using a representation system called \emph{braided Gauss diagrams}. Virtual transverse knots are puzzling because many of the simplest questions one may ask about them are extremely difficult to answer. For instance, we cannot yet distinguish any nontrivial pair of transverse knots as virtual transverse knots. One notable aspect of the finite type theory of virtual transverse knots is that the natural presentation is not finite, as there are an infinite number of braided Gauss diagrams with a given number of chords. We will give a different presentation which is finite. With a computer program, we calculate the dimensions over various finite fields for the universal vector space of virtual transverse finite type invariants up to rank 5.

\section{Diagram systems and generalized finite type theories}

We will begin by presenting a category theoretical framework that describes the relationship between diagram systems and finite type theories. We introduce the notion of the universal finite type group for a cubical complex, and then we introduce the notion of a diagram system for a cubical complex. We prove that given a diagram system, we obtain a simplified, and often finite, set of generators for the rank $n$ finite type group.

Let $[n]$ denote the set $\{1,...,n\}$. The combinatorial $n$-cube, $C_n$, is the set of all functions $[n]\to\{0,1\}$, which we call binary sequences. For a binary sequence $b$, we write $|b|$ to denote the total number of $1$s within it.  

We define the \emph{cube category}, $\mathbf{Cb}$, to be the category where the objects are the combinatorial $n$-cubes for all $n\geq 0$, and the arrows are the functions $a: C_m\to C_n$ for which there exists an ordered pair, $(f,s)$, where $f:[m]\to[n]$ injectively, $s: [n]\setminus \im(f) \to \{0,1\}$, and which has the following two properties:
\begin{itemize}
\item[1)]  For any $b\in C_m$, and any $i\in [m]$, we have $b(i) = a(b)(f(i))$.
\item[2)]  For any $b\in \im(a)$, and any $i\in [n]\setminus \im(f)$, we have $b(i) = s(i)$.
\end{itemize}

 \begin{figure}[h]
\caption{The cube map $C_2\to C_3$ with $f(1) = 1, f(2) = 3$, and $s(2) = 1$.}
\centering
\includegraphics[scale = 0.7]{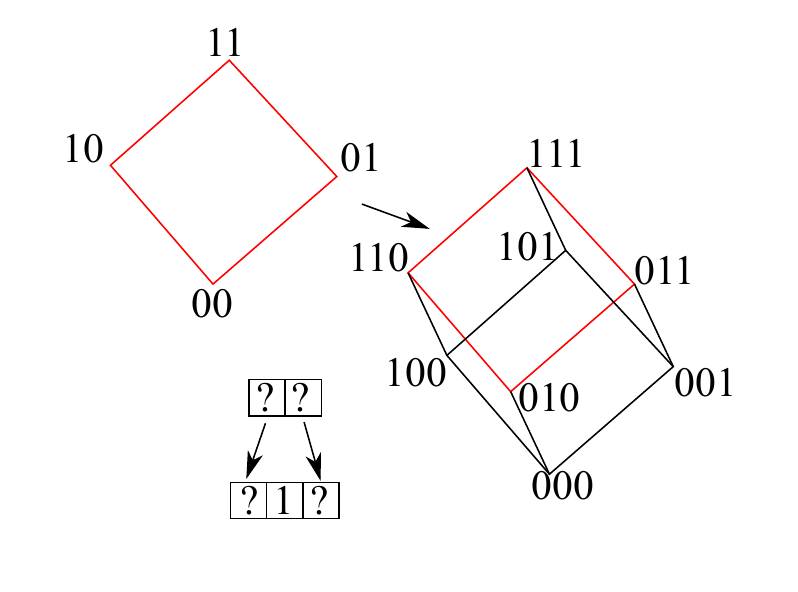}
\end{figure}

A \emph{cubical complex} is defined to be a contravariant functor $\mathbf{Cb}^{op}\to \mathbf{Set}$. Given a cubical complex $X$, we call $X(C_n)$ the set of $n$-cells of the complex, and we write it as $X_n$. 

It should be noted that this is not quite the standard definition of a cubical complex, as cells have directed edges and these directions must be preserved by glueing maps, but for our purposes this definition is the most convenient. We will not be interested in the topology of a cubical complex. Rather, we are concerned with their finite type theories. 

\begin{dfn}
Let $X$ be a cubical complex. We define the \emph{rank $n$ finite type group} of $X$ to be the abelian group $U_n(X)$ with the following presentation: 
\begin{itemize}
\item[] The generators are given by the $0$-cells of $X$.
\item[] The relations are indexed by $c\in X_{n+1}$, and are given by the following formula. $$\sum_{f\in \hom(C_0,C_{n+1})} (-1)^{|f(\varnothing)|}\cdot X(f^{op})(c)$$
\end{itemize}
Geometrically, this sum is the alternating sum over the corners of any $(n+1)$-cube. For $x\in X_0$, we will use $x$ interchangibly to denote the 0-cell, and the element of $U_n(X)$ corresponding to that 0-cell. 
\end{dfn}

If $X$ and $Y$ are cubical complexes, and $\eta$ is a natural transformation from $X$ to $Y$, then $\eta(C_0)$ is a map $X_0\to Y_0$. This induces a homomorphism $U_n(X)\to U_n(Y)$ because the relations of $U_n(X)$ are mapped to relations of $U_n(Y)$ by $\eta(C_{n+1})$. Thus, we can think of $U_n$ as a functor from the category of cubical complexes, with natural transformations as morphisms, to the category of abelian groups.

\begin{exm}
We write $K^{cc}$ to denote the cubical complex of crossing changes for knots. An element of $K_n^{cc}$ is an isotopy class of singular knots with $n$ self-intersections which are labeled $1$ through $n$, along with a choice of bijection $\sigma_i: \{0,1\}\to\{1,-1\}$ for each singularity. For a map $a:C_m \to C_n$ with corresponding ordered pair $(f,s)$, the map $K^{cc}(a^{op})$ resolves the singularities corresponding to a label $i\in [n]\setminus \im(f)$ into crossings with sign given by $\sigma_i(s(i))$, and for singularities with label $i\in \im(f)$, it replaces the label with $f^{-1}(i)$. Checking functoriality of $K^{cc}: \mathbf{Cb}^{op}\to \mathbf{Set}$ is fairly trivial, so this has been left to the reader.

We then have that $U_n(K^{cc})$ is the universal abelian group for rank $n$ finite type invariants of knots. The abelian group of rank $n$ finite type invariants of knots with coefficients in an abelian group $G$ is naturally isomorphic to the group of homomorphisms from $U_n(K^{cc})$ to $G$. The group $U_n(K^{cc})$ is the natural target space for the universal rank $n$ finite type invariant. 

It should be noted that, even if we used a fixed choice of $\sigma_i$, rather than letting it depend on the singularity, we would still get the same finite type group $U_n(K^{cc})$. The reason we include these extra bits of information is that they are necessary for us to be able to find a diagram system for the cubical complex, which is a notion we will soon define. There may be many topologically distinct cubical complexes with the same finite type theory, as when there are multiple cubes with the same corners, it will be the same as if there was only one such cube from the perspective of the abelian group quotient. 
\end{exm}

\begin{exm}
Here, we define something we will call \emph{the cubical complex of a generating set} of a monoid. We will build upon this example further in this section to help explain the concepts we introduce. To begin, let $M$ be a monoid, and let $S$ be a generating set of $M$. We then define a cubical complex $Y$ where the cubes correspond to inserting or deleting generators within a word.  More precisely, an $n$-cell of $Y_n$ consists of a function $C_n\to M$ of the form $$ b\mapsto x_0y_1^{b(\sigma(1))}x_1y_2^{b(\sigma(2))}x_2...y_n^{b(\sigma(n))}x_n$$ where $x_0,...,x_n$ are elements of $M$, $y_1,...,y_n$ are elements of $S$, and $\sigma: [n]\to[n]$ is a permutation. To define how this maps the morphisms, we say that if $f:C_m\to M$ is of the above form, and $a: C_n\to C_m$ is a cube map, then $Y(a^{op})(f) = f\circ a$.

In this case, we have that $U_n(Y)$ is isomorphic to the quotient of the monoid ring given by $\Z[M]/J^{n+1}$, where $J$ is the two-sided ideal generated by elements of the form $1-y$, for $y\in S$. To see why this is the case, we observe that when we expand the product $$x_0(1-y_1)x_1(1-y_2)x_2...(1-y_{n+1})x_{n+1}$$ we get an alternating sum of the corners of an $(n+1)$-cell of $Y$. Thus, the generators of $J^{n+1}$ coincide with the relations of $U_n(Y)$.
\end{exm}

Let $\mathbf{Inj}$ be the category of injective functions between finite sets. 

\begin{dfn}

A \emph{diagram category} is a category $D$ equipped with a functor $F_D:D\to \mathbf{Inj}$ such that the following three properties hold.

\begin{itemize}
\item[1)] $F_D$ is faithful.
\item[2)] If $a$ is an object of $D$, then for every subset $S\subseteq F_D(a)$, there is an arrow $f: b\to a$ of $C$ such that $\im(F_D(f)) = S$.
\item[3)] If $f_1: b_1\to a$ and $f_2: b_2\to a$ are such that $\im(F_D(f_1)) = \im(F_D(f_2))$, then there is an isomorphism $g: b_1\to b_2$ such that $f_1 = f_2 g$.
\end{itemize}

The objects of a diagram category are called \emph{diagrams}, and for a diagram $a$, the cardinality $|F_D(a)|$ is called the order of $a$ and is written $|a|$. This will always be a finite number because the objects of $\mathbf{Inj}$ are required to be finite sets. 

Whenever $S\subseteq F_D(a)$, we will use $a_S$ to denote the object of $D$, defined up to isomorphism, for which there exists an arrow $f:a_S\to a$ with $\im(F_D(f)) = S$. We call $a_S$ the subdiagram of $a$ corresponding to $S$. We will write $\iota_S$ to denote the arrow $a_S\to a$, which is defined up to isomorphism in the overcategory of $a$.

\end{dfn}

\begin{exm}
A Gauss diagram is a set of chords on a circle, each of which has a specified direction and a specified sign. They represent virtual knots, where the chords correspond to crossings, the directions of the chords designate over-crossings and under-crossings, and the signs of the chords designate the signs of the crossings. Gauss diagrams form a diagram category $GD$ where the arrows are subdiagram inclusions and rotational symmetries, and $F_{GD}$ takes a Gauss diagram to its set of chords.
\end{exm}

\begin{exm}
Given an alphabet, $A$, of symbols, words in that alphabet can be given the structure of a digram category which we call $W(A)$. An object of $W(A)$ is just a sequence of symbols from $A$, and $F_{W(A)}$ takes any word to the set of symbols that comprise it, where repeated symbols of the same kind are considered different elements of the set. A morphism $a: w_1\to w_2$ of $W(A)$ is a way to map $w_1$ into $w_2$ as a subword, where the letters of $w_1$ appear in order within $w_2$, but not necessarily consecutively.
\end{exm}

\begin{lem}
If $D$ is a diagram category and $a$ is an object of $D$, and if $H\subseteq S \subseteq F_D(a)$, then the arrow $\iota_H$ lifts uniquely up $\iota_S$ to give us a map $\iota_{(H,S)}: a_H\to a_S$ with $\iota_S\iota_{(H,S)} = \iota_H$.
\end{lem}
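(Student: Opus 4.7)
The plan is to handle uniqueness first using faithfulness, and then construct existence by applying property 2 inside $a_S$ followed by property 3 to identify the source of the resulting arrow with $a_H$.

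For uniqueness, suppose $\alpha, \beta : a_H \to a_S$ both satisfy $\iota_S\alpha = \iota_S\beta = \iota_H$. Applying $F_D$ gives $F_D(\iota_S)F_D(\alpha) = F_D(\iota_S)F_D(\beta)$ in $\mathbf{Inj}$. Since every arrow of $\mathbf{Inj}$ is injective as a function, $F_D(\iota_S)$ is left-cancellable, so $F_D(\alpha) = F_D(\beta)$. By property 1 (faithfulness of $F_D$), this forces $\alpha = \beta$.

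For existence, I would first transfer the inclusion $H \subseteq S$ into $F_D(a_S)$. The arrow $F_D(\iota_S) : F_D(a_S) \to F_D(a)$ is an injection with image $S$, so it restricts to a bijection $F_D(a_S) \to S$. Let $T := F_D(\iota_S)^{-1}(H) \subseteq F_D(a_S)$, which is mapped bijectively onto $H$ by $F_D(\iota_S)$. Property 2 applied to $a_S$ and $T$ produces an arrow $g : b \to a_S$ with $\im(F_D(g)) = T$. The composite $\iota_S g : b \to a$ then has image $F_D(\iota_S)(T) = H$ in $F_D(a)$, which agrees with the image of $\iota_H$. Property 3, applied to the pair $\iota_H$ and $\iota_S g$, yields an isomorphism $\psi : a_H \to b$ with $\iota_S g \psi = \iota_H$. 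Setting $\iota_{(H,S)} := g\psi$ gives an arrow $a_H \to a_S$ satisfying the required factorization.

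The only subtlety, and the thing one has to watch out for rather than any real obstacle, is the fact that $a_S$, $a_H$, and $b$ are each defined only up to (a canonical) isomorphism. Once representatives of $a_H$ and $a_S$ are fixed, the uniqueness argument above — reinterpreted in the overcategory of $a$, where $\iota_S$ and $\iota_H$ live — shows that any two candidate maps $a_H \to a_S$ over $a$ coincide, so the whole construction is independent of the choice of $b$, $g$, and $\psi$. The argument itself is a direct application of the three defining properties of a diagram category in sequence; there is no combinatorial obstruction to overcome.
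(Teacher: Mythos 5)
Your proof is correct and follows essentially the same route as the paper: pull $H$ back along the injection $F_D(\iota_S)$ to get $T\subseteq F_D(a_S)$, apply property 2 inside $a_S$, and use property 3 to identify the source with $a_H$, exactly as the paper does with its $Q=(F_D(\iota_S))^{-1}H$ and $(a_S)_Q$. Your uniqueness argument via left-cancellability of the injection $F_D(\iota_S)$ plus faithfulness is in fact spelled out a bit more carefully than the paper's one-line appeal to the axioms, but it is the same underlying observation.
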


\begin{proof}
Let $Q\subseteq F_D(a_S)$ be the inverse image $Q = (F_D(\iota_S))^{-1}H$. Then we have an arrow $\iota_Q: (a_S)_Q\to a_S$, and we see that $\iota_S\iota_Q: (a_S)_Q\to a$ and has $\im(F_D(\iota_S\iota_Q)) = H = \im(F_D(\iota_H))$. Therefore, by axiom 3 of diagram categories, we have an isomorphism $g: a_H (a_S)_Q$ such that we have $\iota_H = \iota_S\iota_Qg$. We may then simply set $\iota_{(H,S)} = \iota_Qg$ to get the desired lifting. This lifting will be unique by axiom 2, as any such lifting will have $Q$ as its image under $F_D$.
\end{proof}

Given a diagram category $D$, we construct a cubical complex, denoted $D^\Box$. Intuitively speaking, a cube of this complex is given by taking all diagrams in-between a given diagram and one of its subdiagrams. More formally, we define the elements of $D^\Box_n$ to be the set of all isomorphism classes of ordered pairs $(g,\phi)$ where $g:p\to q$ is an arrow of $D$, and $\phi$ is a bijection $F_D(q) \setminus \im(F_D(g))\to [n]$. An isomorphism between two such pairs is an isomorphism of arrows between the first components that preserves the labelings from the second components. That is to say, if we have $(g_1,\phi_1)$ with $g_1: p_1\to q_1$ and $(g_2,\phi_2)$ with $g_2:p_2\to q_2$, then an isomorphism between these pairs is a pair of isomorphisms $h_p: p_1\to p_s$ and $h_q:q_1\to q_2$ with $g_2h_p = h_qg_1$ and $\phi_2F_D(h_q) = \phi_1$. Next, we need to specify how a map $a:C_m \to C_n$ acts on a pair $(g,\phi)$ with $g:p\to q$. Suppose that   the ordered pair corresponding to $a$, as in the definition of the cube category, is $(f,s)$. We then define $D^\Box (a^{op})(g,\phi)$ by the following formula. $$D^\Box (a^{op})(g,\phi)  = (\iota_{ (\im(F_D(g)) \cup \phi^{-1}s^{-1}\{1\},F_D(q) \setminus \phi^{-1}s^{-1}\{0\})}, f^{-1}\phi|_{\phi^{-1}(\im(f))} F_D(\iota_{F_D(q) \setminus \phi^{-1}s^{-1}\{0\}}))$$ This formula might be somewhat difficult to unpack, so it may not be obvious that it is functorial. A proof of functoriality is given below.

Since the isomorphism classes of objects of $D$ are naturally in bijective correspondence with the elements of $D_0^\Box$, we will usually abuse notation and write $a$ to refer to the 0-cell corresponding to the isomorphism class of $(1_a,\varnothing)$, whenever $a$ is an object of $D$.

\begin{prp}
For a diagram category $D$, the map $D^\Box: \mathbf{Cb}^{op}\to \mathbf{Set}$ is functorial.
\end{prp}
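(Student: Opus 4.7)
The plan is to verify the two functoriality axioms: (i) $D^\Box(\mathrm{id}_{C_n}^{op})$ is the identity on $D^\Box_n$, and (ii) for composable cube maps $a: C_m \to C_n$ and $b: C_n \to C_p$ with associated data $(f_a, s_a)$ and $(f_b, s_b)$, the equality $D^\Box(a^{op}) \circ D^\Box(b^{op}) = D^\Box((ba)^{op})$ holds. The identity case is immediate: with $(f, s) = (\mathrm{id}_{[n]}, \varnothing)$, we have $\phi^{-1}s^{-1}\{0\} = \phi^{-1}s^{-1}\{1\} = \varnothing$, so the subdiagram passed to is $a_{(\im F_D(g),\, F_D(q))}$, which equals $q$ up to the canonical isomorphism from axiom 3, and the relabeling is just $\phi$.

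For the composition axiom, I would first compute the pair $(f, s)$ attached to $ba$: namely $f = f_b \circ f_a$, while $s: [p] \setminus \im f \to \{0,1\}$ agrees with $s_b$ on $[p] \setminus \im f_b$ and with $s_a \circ f_b^{-1}$ on $\im f_b \setminus \im f$. I would then take a generic $(g: p \to q, \phi)$ and chase it through both sides. Applying $D^\Box(b^{op})$ passes to the subdiagram $q' = q_{F_D(q) \setminus \phi^{-1}s_b^{-1}\{0\}}$ with its image-part enlarged by $\phi^{-1}s_b^{-1}\{1\}$ and with a new labeling $\phi'$ derived from $\phi$ via $F_D$ of the inclusion $q' \to q$. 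Applying $D^\Box(a^{op})$ to the result passes further to a subdiagram of $q'$.

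The crucial technical tool is the preceding lemma, which lets one replace a nested subdiagram of $q'$ with a direct subdiagram of $q$. Tracking which elements of $F_D(q) \setminus \im F_D(g)$ are removed, which are added to the image-part, and which survive at each step, one sees that the doubly-nested subdiagram is canonically isomorphic to $q_{F_D(q) \setminus \phi^{-1}s^{-1}\{0\}}$ with image-part $\im F_D(g) \cup \phi^{-1}s^{-1}\{1\}$, which is exactly what $D^\Box((ba)^{op})$ produces. The relabeling comparison then amounts to verifying that $f_a^{-1}$ composed with $f_b^{-1}\phi$, once restricted to the correct surviving coordinates and precomposed with $F_D$ of the intermediate inclusion, equals $(f_b f_a)^{-1}\phi$ restricted to those same coordinates; this follows from the uniqueness clause of the lifting lemma together with associativity of function composition.

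The main obstacle will be bookkeeping rather than conceptual difficulty: every object is defined only up to isomorphism, so each equality must be read modulo the canonical isomorphisms guaranteed by axiom 3, and one must carefully invoke the uniqueness statement of the lifting lemma to see that the two nested inclusion arrows literally compose to the single direct inclusion. Once the subdiagram inclusions and their images under $F_D$ are tracked explicitly alongside the label bijections, the verification reduces to a routine but intricate unraveling.
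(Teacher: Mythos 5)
Your plan follows essentially the same route as the paper's proof: compute the data $(f,s)$ of the composite cube map (your piecewise formula for the composed $s$ is correct), then chase a generic cell $(g,\phi)$ through both sides and match the two coordinates of the defining formula, with the lifting lemma handling the nested-subdiagram identifications and the relabeling comparison. The paper performs exactly this coordinate-by-coordinate verification (it silently treats the up-to-isomorphism identifications you flag, and omits the identity check, which as you note is immediate), so the two arguments coincide.
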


\begin{proof}
Let $a: C_m\to C_n$ with corresponding ordered pair $(f_a,s_a)$, and let $n: C_k\to C_m$ with corresponding ordered pair $(f_b,s_b)$. We wish to establish that whenever $(g,\phi)$ is as above with $g:p\to q$, we have $D^\Box((ab)^{op})(g,\phi) = D^\Box(b^{op})D^\Box(a^{op})(g,\phi)$. We will to this term by term. For the first term of the ordered pair, we see that the pair $(f_{ab},s_{ab})$ corresponding to $ab$ has $f_{ab} = f_af_b$ and $s_{ab}$ is defined piecewise as $s_a$ in $[n]\setminus \im(f_a)$ and $s_bf_a^{-1}$ in $f_a([m]\setminus \im(f_b))$. Therefore, the inverse images $s_{ab}^{-1} \{i\}$ for $i\in \{0,1\}$ are equal to $\phi^{-1}s_{a}^{-1}\{i\}\cup\phi^{-1}f_as_b^{-1}\{i\}$. Thus, we have  $$((D^\Box((ab)^{op})(g,\phi))_1 = \iota_{(\im(F_D(g))\cup\phi^{-1}s_{ab}^{-1}\{1\}, F_D(q) \setminus \phi^{-1}s_{ab}^{-1}\{0\})} $$ $$ = \iota_{(\im(F_D(g))\cup\phi^{-1}s_{a}^{-1}\{1\}\cup\phi^{-1}f_as_b^{-1}\{1\}, F_D(q) \setminus (\phi^{-1}s_{a}^{-1}\{0\}\cup\phi^{-1}f_as_b^{-1}\{0\}))} = (D^\Box(b^{op})D^\Box(a^{op})(g,\phi))_1$$

Next, we check the second coordinate. We have $$((D^\Box((ab)^{op})(g,\phi))_2 = f_{ab}^{-1}\phi|_{\phi^{-1}\im(f_{ab})} F_D(\iota_{F_D(q)\setminus \phi^{-1}s_{ab}^{-1}{0}})$$ $$ = f_b^{-1}f_a^{-1} \phi|_{\phi^{-1}\im(f_{ab})} F_D(\iota_{F_D(q)\setminus (\phi^{-1}s_{a}^{-1}\{0\}\cup\phi^{-1}f_as_b^{-1}\{0\})})$$
and $$f_a^{-1}\phi|_{\phi^{-1}\im(f_{ab}) }=\phi'|_{\phi'^{-1}\im(f_b)}$$ where $\phi' = f_a^{-1}\phi|_{\phi^{-1}(\im(f_a))} F_D(\iota_{F_D(q) \setminus \phi^{-1}s_a^{-1}\{0\}})$. 

Therefore, since $\phi' = (D^\Box(a^{op})(g,\phi))_2$, we have $$ ((D^\Box((ab)^{op})(g,\phi))_2 =   f_b^{-1}\phi'|_{\phi'^{-1}(\im(f_b))} F_D(\iota_{F_D(q)\setminus (\phi^{-1}s_{a}^{-1}\{0\}\cup\phi^{-1}f_as_b^{-1}\{0\})}) $$ $$= (D^\Box(b^{op})D^\Box(a^{op})(g,\phi))_2 $$
\end{proof}

\begin{dfn}
We define a \emph{diagram system}, $(D,\varepsilon)$, for a cubical complex $X$ to be a diagram category $D$, along with a natural transformation $\varepsilon: D^\Box\to X$, such that $\varepsilon(C_0): D^\Box_0\to X_0$ is a surjection. We write $\varepsilon_n = \varepsilon(C_n)$.
\end{dfn}

\begin{exm}
Let $M$ be a monoid with generating set $S$, and let $Y$ be the cubical complex of this generating set. We have a diagram system for $Y$ given by $(W(S),\varepsilon)$, where $\varepsilon: W(S)^\Box\to Y$ is given by letting $\varepsilon_n(g,\phi)(b)$ be obtained by taking the word that is the codomain of $g$, and removing the letters in the set $\phi^{-1}b^{-1}\{0\}$. In this way, words from the generating set form a diagram system for the cubical complex of that generating set.
\end{exm}

\begin{thm}
If $X$ is a cubical complex with a diagram system $(D,\varepsilon)$, then $U_n(X)$ is generated by elements of the form $\varepsilon_0(a)$, where $|a| \leq n$.
\end{thm}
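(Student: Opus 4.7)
The plan is to proceed by induction on the order $|a|$. Since $\varepsilon(C_0)$ is surjective, every generator of $U_n(X)$ has the form $\varepsilon_0(a)$ for some object $a$ of $D$; when $|a|\leq n$ there is nothing to prove, so assume $|a|\geq n+1$, and inductively assume the conclusion is known for all diagrams of smaller order. It suffices to exhibit an $(n+1)$-cell of $X$ whose associated relation rewrites $\varepsilon_0(a)$ as an integer combination of elements $\varepsilon_0(b)$ with $|b|<|a|$.

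To build such a cell, first construct one in $D^\Box$ and then push it through $\varepsilon$. Pick a subset $T\subseteq F_D(a)$ of size $n+1$, let $g\colon a_{F_D(a)\setminus T}\to a$ be the arrow supplied by axiom 2 of a diagram category, and choose any bijection $\phi\colon T\to [n+1]$. The pair $(g,\phi)$ represents an element $c\in D^\Box_{n+1}$. Unpacking the formula for $D^\Box(f^{op})$ on a morphism $f\colon C_0\to C_{n+1}$ corresponding to a binary sequence $s\in C_{n+1}$, and noting that for a $0$-cube the second coordinate of the pair is forced to be empty (so the resulting arrow is an isomorphism and its iso-class is just a diagram), one sees that the corner of $c$ at $s$ is the isomorphism class of
\[
a_{(F_D(a)\setminus T)\,\cup\,\phi^{-1}s^{-1}\{1\}}.
\]
The all-ones corner $s\equiv 1$ recovers $a$, while every other $s$ contributes a proper subdiagram of $a$, hence a diagram of order strictly less than $|a|$.

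Now apply the natural transformation $\varepsilon$: the image $\varepsilon_{n+1}(c)$ is an $(n+1)$-cell of $X$, and by naturality of $\varepsilon$ its corners under the maps $f\colon C_0\to C_{n+1}$ are precisely $\varepsilon_0$ applied to the corners of $c$. The relation in $U_n(X)$ indexed by $\varepsilon_{n+1}(c)$ then reads
\[
(-1)^{n+1}\varepsilon_0(a)\;+\;\sum_{s\in C_{n+1},\,s\neq \mathbf{1}} (-1)^{|s|}\,\varepsilon_0\!\bigl(a_{(F_D(a)\setminus T)\,\cup\,\phi^{-1}s^{-1}\{1\}}\bigr)\;=\;0,
\]
which expresses $\varepsilon_0(a)$ as an integer linear combination of $\varepsilon_0(b)$ for diagrams $b$ with $|b|<|a|$. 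The inductive hypothesis then expresses each such term as a combination of $\varepsilon_0(c)$ with $|c|\leq n$, completing the argument. The only delicate point is the corner identification, which is just a careful unwinding of the (admittedly forbidding) formula defining $D^\Box$ on morphisms, together with the uniqueness clause in the preceding lemma to confirm that the first coordinate of the resulting pair is well-defined up to isomorphism; once this is in hand the induction is immediate.
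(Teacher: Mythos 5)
Your proposal is correct and follows essentially the same argument as the paper: choose a size-$(n+1)$ subset of $F_D(a)$, use axiom 2 and a bijection to $[n+1]$ to build an $(n+1)$-cell of $D^\Box$, push it through $\varepsilon$, and use the resulting relation to rewrite $\varepsilon_0(a)$ in terms of strictly smaller diagrams, finishing by induction on the order. The only difference is that you unwind the corner formula for $D^\Box$ explicitly, which the paper leaves implicit.
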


\begin{proof}
Let $x\in X_0$. We wish to show that $x$ is equivalent modulo the rank $n$ relations to a linear combination of elements of the form $\varepsilon_0(a)$ with $|a| \leq n$. By surjectivity, we may choose an object of the diagram category, $r$, so that $\varepsilon_0(r) = x$. Let $m = |r|$. If $m \leq n$, then we are done. For the case $m>n$, we proceed by induction. Suppose that every element of the form $\varepsilon_0(b)$ with $|b| \leq m-1$ can be expressed as a linear combination of elements of the form $\varepsilon_0(a)$ with $|a| \leq n$. Then, we just need to show that $x$ can be expressed as a linear combination of elements of the form $\varepsilon_0(b)$ with $|b| \leq m-1$. To do this, let $S$ be any subset of $F_D(r)$ with $|S| = n+1$, which must exist because $m>n$. Then, let $g: q\to r$ be an arrow of the diagram category with $\im(F_D(g)) = F_D(r) \setminus S$, and let $\phi: S\to [n+1]$ be a bijection. We then have an element $c$ of $D_{n+1}^\Box$ corresponding to $(g,\phi)$. The relation corresponding to $\varepsilon_{n+1}(c)$ will then have one of its terms equal to $\varepsilon_0(r)$, and all of its other terms will be of the form $\varepsilon_0(b)$ with $|b| \leq m-1$. This gives us the desired linear combination.
\end{proof}

The idea behind this theorem is that one good way to compute the finite type theory of a cubical complex is to find a diagram system for that cubical complex. This will yield a nice set of generators. For example, in the case of the cubical complex of a generating set for a monoid and its diagram system of words, the above theorem corresponds to the fact that $\Z[M]/J^{n+1}$ is generated as an abelian group by words of length at most $n$. For more complicated cubical complexes, like $K^{cc}$, this reduction in the space of generators is extremely useful. 

We also have the following useful fact about the finite type theories of diagram categories. 

\begin{thm}[Goussarov's theorem]
Let $D$ be a diagram category, and let $Ab(D_{\leq n})$ be the free abelian group on isomorphism classes of diagrams of order at most $n$. There is an isomorphism $$s: U_n(D^\Box) \to Ab(D_{\leq n})$$ given by mapping a diagram to the sum of all subdiagrams of order at most $n$. That is to say, 
$$ s(a) = \sum_{S\subseteq F_D(a), |S| \leq n} a_S $$

Note that one isomorphism class of object may appear multiple times in this sum if it is a subobject in multiple ways. 
\end{thm}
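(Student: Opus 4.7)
The plan is to build an explicit two-sided inverse to $s$ via Möbius inversion on the Boolean lattice of subsets of $F_D(a)$, using the preceding theorem to reduce all checks on $U_n(D^\Box)$ to the finitely many 0-cells of order at most $n$.

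\textbf{Step 1: $s$ is well defined.} First I would verify that the formula $a\mapsto \sum_{S\subseteq F_D(a),\,|S|\leq n} a_S$, extended linearly from all isomorphism classes of $D$, annihilates each rank-$n$ relation. An $(n+1)$-cell is represented by $(g,\phi)$ with $g:p\to q$ and $\phi: F_D(q)\setminus \im(F_D(g))\to [n+1]$; unpacking the formula for $D^\Box(f^{op})$ when $f:C_0\to C_{n+1}$ corresponds to a pair with $s$-component $b\in C_{n+1}$, and using that $\phi^{-1}b^{-1}\{0\}$ and $\phi^{-1}b^{-1}\{1\}$ partition $F_D(q)\setminus I$ with $I:=\im(F_D(g))$, the two arguments of the $\iota$ in the first coordinate coincide, so the corner at $b$ is just the 0-cell $q_{I\cup \phi^{-1}b^{-1}\{1\}}$. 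Writing $T := F_D(q)\setminus I$ and viewing $b$ as a function $T\to\{0,1\}$ via $\phi$, applying $s$ and swapping the two sums shows that each subdiagram $q_R$ with $|R|\leq n$ appears with coefficient
\[ \sum_{\substack{b: T\to \{0,1\}\\ b|_{R\cap T}\equiv 1}} (-1)^{|b|} \;=\; (-1)^{|R\cap T|}\,(1-1)^{|T\setminus R|}, \]
which vanishes because $|T\setminus R|\geq (n+1)-n = 1$.

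\textbf{Step 2: Define the candidate inverse.} On a basis element $a$ of $Ab(D_{\leq n})$ set
\[ t(a) \;:=\; \sum_{T\subseteq F_D(a)} (-1)^{|a|-|T|}\, a_T \;\in\; U_n(D^\Box), \]
and extend linearly. Every $a_T$ has order at most $|a|\leq n$, so the right-hand side is a legitimate finite sum of 0-cells, and dependence only on the isomorphism class of $a$ is automatic because subdiagrams are defined up to isomorphism.

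\textbf{Step 3: Both compositions are identities.} For $st$ on a generator $a$ of $Ab(D_{\leq n})$, swapping sums gives
\[ s(t(a)) \;=\; \sum_{R\subseteq F_D(a)} a_R \sum_{R\subseteq T\subseteq F_D(a)} (-1)^{|a|-|T|}, \]
and the inner sum is $[R = F_D(a)]$ by the binomial identity $(1-1)^k = [k=0]$, leaving $a_{F_D(a)} = a$. For $ts$, by the theorem just proved (applied to the tautological diagram system $(D,\mathrm{id})$ for $D^\Box$) the group $U_n(D^\Box)$ is generated by 0-cells $a$ with $|a|\leq n$, and on such an $a$ the check $t(s(a)) = a$ is completely symmetric to the one for $st$.

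\textbf{Main obstacle.} The one substantive calculation is Step 1: one must patiently unpack the cube-map bookkeeping in the definition of $D^\Box$ to see that the corners of an $(n+1)$-cube are exactly the $2^{n+1}$ diagrams $q_{I\cup \phi^{-1}b^{-1}\{1\}}$, after which the alternating sum visibly becomes a vanishing inclusion-exclusion. Steps 2 and 3 are then routine Möbius inversion, once the formula for $t$ has been guessed.
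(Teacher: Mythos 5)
Your proof is correct, and for three of its four ingredients it coincides with the paper's: the vanishing of $s$ on the cube relations (your binary-sequence bookkeeping is just a more explicit unpacking of the paper's indexing of relations by pairs $(Q,a)$ with $|F_D(a)\setminus Q|=n+1$), the inverse $t$ (identical to the paper's $s_-$, since $(-1)^{|a|-|T|}=(-1)^{|F_D(a)\setminus T|}$), and the computation of $s\circ t=\mathrm{id}$. The one genuine divergence is the direction $t\circ s=\mathrm{id}$: the paper verifies $s_-s(a)=a$ for an \emph{arbitrary} 0-cell $a$, handling the truncation in $s$ by observing that for $|H|>n$ the full inclusion--exclusion sum $\sum_{S\subseteq H}(-1)^{|H\setminus S|}a_S$ is itself a relation and may be freely added, after which the untruncated double sum collapses; you instead invoke the preceding generation theorem with the tautological diagram system $(D,\mathrm{id})$ to reduce to 0-cells of order at most $n$, where the truncation is vacuous and the check mirrors the $st$ computation. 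Your reduction is legitimate and non-circular (Theorem 1 is proved independently, and $\mathrm{id}:D^\Box\to D^\Box$ trivially satisfies the surjectivity requirement), and it trades the paper's relation-insertion trick for a cleaner symmetric calculation; what the paper's route buys in exchange is a self-contained proof of the identity on every 0-cell, which also exhibits directly how $s^{-1}s$ acts on diagrams of large order without appealing to the generation statement.
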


\begin{proof}
We wish to prove that $s$ is a well-defined homomorphism of abelian groups, and that it is an isomorphism. To prove that it is a homomorphism, we need to prove that it maps relations to zero. Relations for $U_n(D^\Box)$ can be indexed by pairs $(Q,a)$ with $Q\subseteq F_D(a)$ and $|F_D(a) \setminus Q| = n+1$, and are given by the following formula. $$R_{(Q,a)} = \sum_{Q\subseteq H \subseteq F_D(a)} (-1)^{|H|} a_H$$
Applying $s$, we get $$ s(R_{(Q,a)}) = s\left(\sum_{Q\subseteq H \subseteq F_D(a)} (-1)^{|H|} a_H\right) =  \sum_{Q\subseteq H \subseteq F_D(a)} (-1)^{|H|} \sum_{S\subseteq H, |S| \leq n} a_S$$
Swapping the sums, this becomes $$ \sum_{S\subseteq F_D(a), |S| \leq n} \sum_{S\cup Q \subseteq H \subseteq F_D(a)} (-1)^{|H|}a_S $$

However, $\sum_{S\cup Q \subseteq H \subseteq F_D(a)} (-1)^{|H|}a_S$ can only be nonzero if $S\cup Q = F_D(a)$, but we know that $|F_D(a) \setminus Q| = n+1$ and $|S| \leq n$, so this is impossible. Thus, $s(R_{(Q,a)}) = 0$, so the relations map to zero. 

To prove that $s$ is an isomorphism, we construct an inverse $s_-$. We define $$ s_-(a) = \sum_{S\subseteq F_D(a)} (-1)^{|F_D(a) \setminus S|} a_S $$
Which gives us a homomorphism $Ab(D_{\leq n})\to U_n(D^\Box)$. It is now just a matter of checking that this is indeed an inverse for $s$. We will check that both $ss_-$ and $s_-s$ are identities. 

$$ss_-(a) = s\left( \sum_{H\subseteq F_D(a)} (-1)^{|F_D(a) \setminus H|} a_H \right) =  \sum_{H\subseteq F_D(a)} (-1)^{|F_D(a) \setminus H|} \sum_{S\subseteq H} a_S $$

Swapping the sums, we get

$$ ss_-(a) = \sum_{S\subseteq F_D(a)} \sum_{S\subseteq H \subseteq F_D(a)}  (-1)^{|F_D(a) \setminus H|} a_S $$ 

But $\sum_{S\subseteq H \subseteq F_D(a)}  (-1)^{|F_D(a) \setminus H|} a_S$ is zero unless $S = F_D(a)$, so  $ss_-(a) = a_{F_D(a)} = a$.\\

Finally, we check $s_-s(a)$. 

$$s_-s(a) = s_-\left(\sum_{H\subseteq F_D(a), |H| \leq n} a_H\right) = \sum_{H\subseteq F_D(a), |H| \leq n} \sum_{S\subseteq H} (-1)^{|H \setminus S|} a_S$$ 

We are working in $U_n(D^\Box)$, so we may add in terms as long as they are relations. Furthermore, if $|H|> n$, then $\sum_{S\subseteq H}(-1)^{|H\setminus S|} a_S$ is a relation. Therefore, we have $$s_-s(a) = \sum_{H\subseteq F_D(a), |H| \leq n} \sum_{S\subseteq H} (-1)^{|H \setminus S|} a_S + \sum_{H\subseteq F_D(a), |H| > n} \sum_{S\subseteq H} (-1)^{|H \setminus S|} a_S $$
$$= \sum_{H\subseteq F_D(a)} \sum_{S\subseteq H} (-1)^{|H \setminus S|} a_S $$

Swapping the sums, we get 

$$s_-s(a) = \sum_{S\subseteq F_D(a)} \sum_{S\subseteq H \subseteq F_D(a)} (-1)^{|H \setminus S|} a_S $$

But $\sum_{S\subseteq H \subseteq F_D(a)} (-1)^{|H \setminus S|} a_S$ can only be nonzero if $S = F_D(a)$, so we have $s_-s(a) = a$.
\end{proof}

Thus, when $X$ is a cubical complex, and $(D,\varepsilon)$ is a diagram system for $X$, we only need to compute the subspace $$s(\ker(U_n(\varepsilon_0)))\subseteq Ab(D_{\leq n})$$ and this will give us a presentation for $U_n(X)$, as we have an isomorphism $$ Ab(D_{\leq n})/ s(\ker(U_n(\varepsilon_0))) \simeq U_n(X)$$

This presentation can usually be computed if we have some system of equivalence moves for our diagrams for which $X_0$ is the moduli space. In the case of Gauss diagrams, these equivalence moves are just Reidemeister moves, and the relations are then just the subdiagram sums of Reidemeister moves. Things get more complicated when the equivalence moves cannot be easily stated in terms of local modifications of the diagrams, as is the case with the diagram system of looms that we will later define.

\begin{exm}\label{claspdiag}
A clasp diagram consists of the following data:
\begin{itemize}
\item[1)] A chord diagram with a finite set $C$ of unoriented chords on the circle.
\item[2)] A total ordering on $C$ called the height ordering.
\item[3)] A function $s: C\to \{1,-1\}$ called the sign function.
\end{itemize}

Clasp diagrams form a diagram category $CL$ where the morphisms are subdiagram inclusions, and $F_{CL}$ takes a diagram to its set of chords. There is a map from clasp diagrams to knots, which represents a knot by starting with a circular unknot, then adding clasps. We add the clasps along the chords of the diagram, with relative heights given by the height ordering, and clasp sign given by the sign function. This map induces a functor $k: CL^\Box\to K^{cc}$ so that $(CL,k)$ is a diagram system for $K^{cc}$. There are equivalence moves for clasp diagrams, modulo which we get the set of knots. A presentation for $U_n(K^{cc})$ can then be computed by analyzing these equivalence moves, as was done in \cite{clasp}.
\end{exm}

\begin{figure}[h]
\caption{An example of a clasp diagram and the corresponding knot.}
\centering
\includegraphics[scale = 0.8]{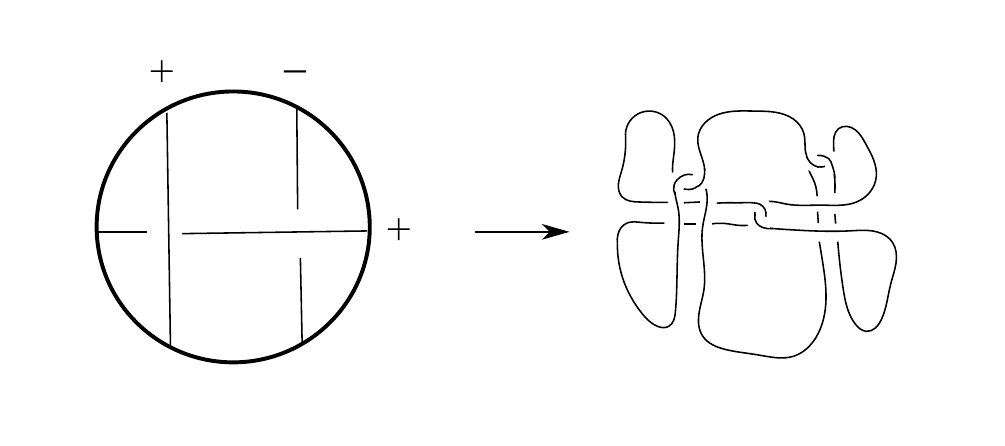}
\end{figure}

\begin{exm}
Gauss diagrams form a diagram system for the cubical complex of virtual knots, where the cubes consist of ways to switch a set of virtual crossings to real crossings. For a treatment of this the resulting finite type theory, see \cite{virt}. 
\end{exm}

Finally, it is worth noting that if we have a diagram category $D$, as well as a surjective function $\pi: D_0^\Box \to S$ from isomorphism classes of objects of $D$ to some set $S$ whose elements we wish to understand, then we can construct a cubical complex $X$ by $X_0 = S$ and $X_n =D_n^\Box$ for $n>0$, where for $a: C_0\to C_n$, we let $X(a^{op}) = \pi D^\Box(a^{op})$. We then have invariants for elements of $S$ in $U_n(X)$ for all $n$. Using Theorem 2 we can deduce that $U_n(X)$ is isomorphic to $Ab(D_{\leq n}) / R$, where $R$ is defined to be the subgroup of $Ab(D_{\leq n})$ generated by elements of the form $s(x) - s(y)$ for which $\pi(x) = \pi(y)$. If we have some system of equivalence moves on isomorphism classes of objects of $D$ for which $S$ is the moduli space, then we can find a generating set for $R$ indexed by those equivalence moves.

\section{Looms and the finite type theory of delta moves}

A \emph{loom} is a sequence of symbols from the alphabet $\{ +_i^j, -_i^j, 0_i^j, |_\pm\}_{i, j\in \Z_{>0}, \pm\in \{+,-\}}$ with the following properties.
\begin{itemize}
\item[1)] If there are $n$ symbols from $\{|_\pm\}_{ \pm\in \{+,-\}}$, then each symbol of the form $\sigma_i^j$ with $\sigma\in \{+,-,0\}$ has $1 \leq i \leq n$. 
\item[2)] If there are $m$ symbols from $\{ +_i^j, -_i^j, 0_i^j\}_{i, j\in \Z_{>0}}$, then for every $j$ from $1$ to $m$, there is exactly one symbol with superscript $j$. 
\end{itemize}

The symbols of the form $s_i^j$ with $s\in \{+,-,0\}$ are called \emph{thread symbols}, the subscript is called the \emph{target}, and the supersctipt is called the \emph{height}. The symbols $|_{\pm}$ are called \emph{bar symbols}, and their subscript is called their sign. 

Looms encode knots. To construct a knot from a loom, we start with a circular unknot and then we add $n$ vertical parallel clasps, corresponding to the bar symbols, with the appropriate signs. Then, we add in loops around these clasps, coming up from the bottom, corresponding to the thread symbols. The target determines which clasp the loop goes around, the kind of loop we add depends on the thread symbol, and the height of the loop depends on the height. The height also determines how high up along the clasp the loop goes. See the example below. 

\begin{figure}[h]
\caption{The knot corresponding to the loom $-_2^1\;+_2^4\;|_+\;0_2^3\;-_1^2\;|_-\;0_1^5$.}
\centering
\includegraphics[scale = 0.7]{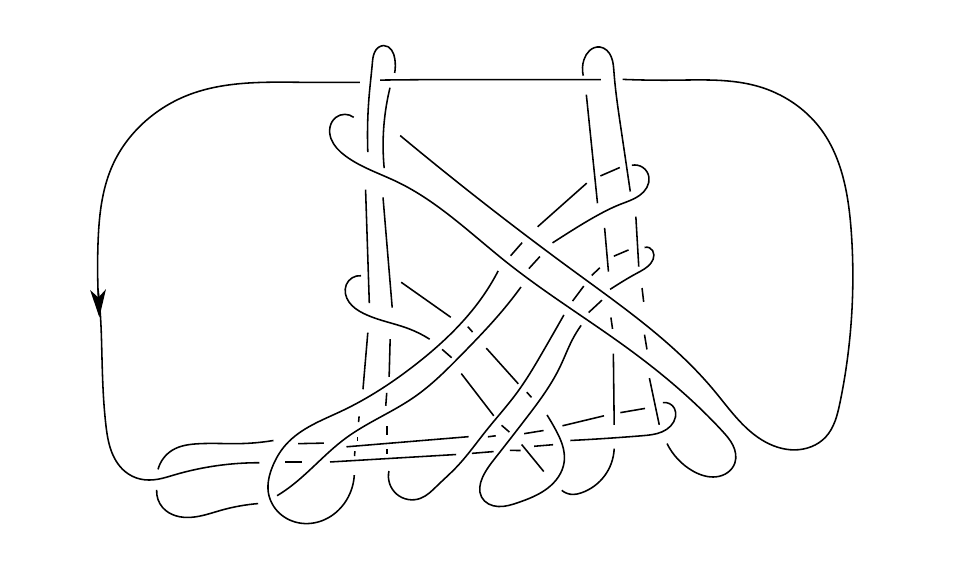}
\end{figure}

As we can see in the figure, the loops corresponding to $+$ symbols have a positive twist, the loops corresponding to the $-$ symbols have a negative twist, and the loops corresponding to $0$ symbols have no twist. The relative heights of the loops, and how high up they reach along the bars, is determined by their height numbering. 

For a loom $\ell$, we write $t(\ell)$ to denote the set of thread symbols of $\ell$. Given $s\in t(\ell)$, we can create a new loom, denoted $\ell\setminus \{s\}$, by deleting the thread symbol $s$ and decrementing all heights greater than the height of $s$ by one. Given a subset $S\subseteq t(\ell)$, we can create a loom $\ell\setminus S$ by deleting each thread symbol in $S$ and decrementing the remaining heights appropriately. 

We write $k(\ell)$ to denote the knot represented by $\ell$. 

\begin{dfn}
Given a loom, $\ell$, we define four new looms, the left and right, positive and negative stabilizations: $\ell|_-,\; \ell|_+,\; |_-\ell_{+1},\; |_+\ell_{+1}$, where $\ell_{+1}$ denotes increasing every numerical subscript by one. It is easy to check that all four of these are indeed looms. We say two looms are stabilization equivalent if one can be transformed to the other by a sequence of stabilizations and destabilizations. We write $\simeq$ to denote stabilization equivalence, and we write $LM$ to denote the set of looms up to stabilization equivalence. We give $LM$ the structure of a diagram category as follows: A morphism $a: \ell_1\to \ell_2$ is a specified set $S_a\subset t(\ell_2)$ such that $\ell_1\simeq \ell_2\setminus S_a$. The map $t$ then induces our functor $F_{LM} = t:LM\to \mathbf{Inj}$. 
\end{dfn}

\begin{figure}[h]
\caption{The stabilization $0_1^1\;|_-\;0_1^2\;\;\to \;\;0_1^1\;|_-\;0_1^2\;|_+$.}
\centering
\includegraphics[scale = 0.68]{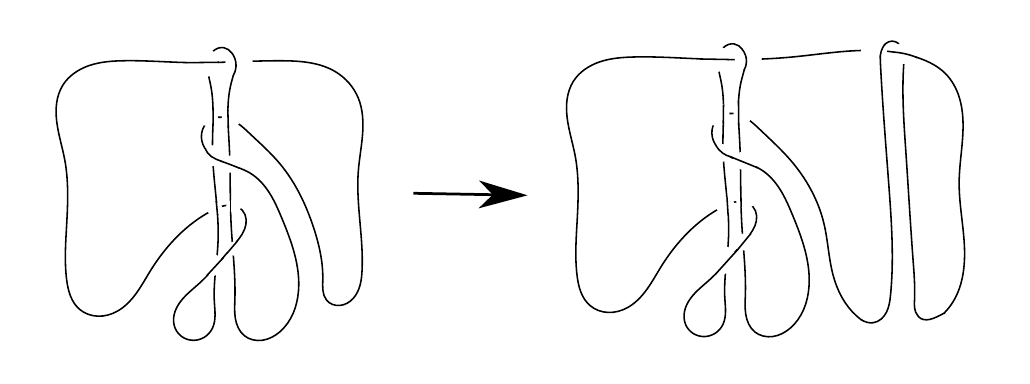}
\end{figure}

It is clear that stabilizations do not modify the resulting knot in any way, as they just add a reducible loop to the side of the knot diagram. Generally, when we talk about a ``loom'', we really mean a stabilization equivalence class of looms. We will make the distinction when it is relevant.

\begin{dfn}
A delta move is a transformation of knots where we pass a stand over a clasp. It can also be thought of as the ``forbidden'' Reidemeister move where three strands cross over each other in a cyclically symmetric way. See the figure.

\begin{figure}[h]
\caption{A delta move.}
\centering
\includegraphics[scale = 0.8]{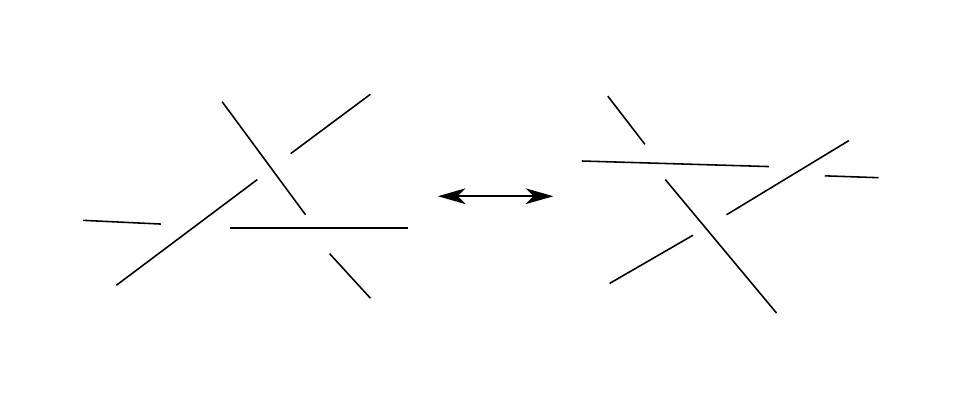}
\end{figure}
 
 We will now define $K^\Delta$, the cubical complex of delta moves. Our preferred definition for this cubical complex is not as obvious as one might think. If we just consider the 2-cells of the complex, there are two kinds of commutative behaviors for delta moves which we wish to include in the complex, depicted in the figure below. 
 
 \begin{figure}[h]
\caption{Two kinds of 2-cell.}
\centering
\includegraphics[scale = 0.8]{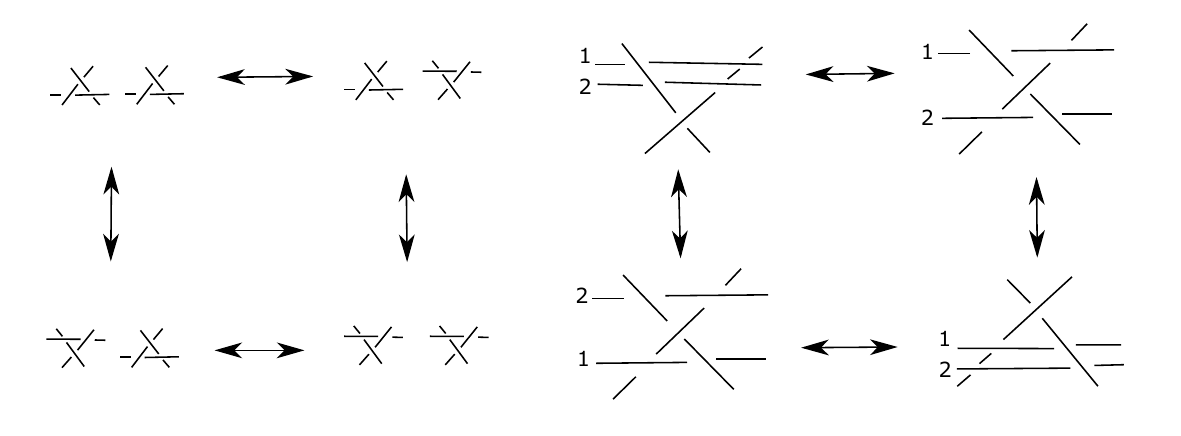}
\end{figure}
 
 To define $K^\Delta$, we first define a \emph{caterpillar band} on a knot. If we have a knot $\gamma: S^1\to \R^3$, a rank $n$ caterpillar band is a smooth embedding $\beta: ([0,1]\times[0,n+1])\to \R^3$ with the property that $\beta^{-1}(\im(\gamma)) = [0,1]\times\{0,...,n+1\}$. That is to say, it is a band which the knot passes through laterally $n$ times. See the figure below. 
 
 \begin{figure}[h]
\caption{A rank 6 caterpillar band.}
\centering
\includegraphics[scale = 0.6]{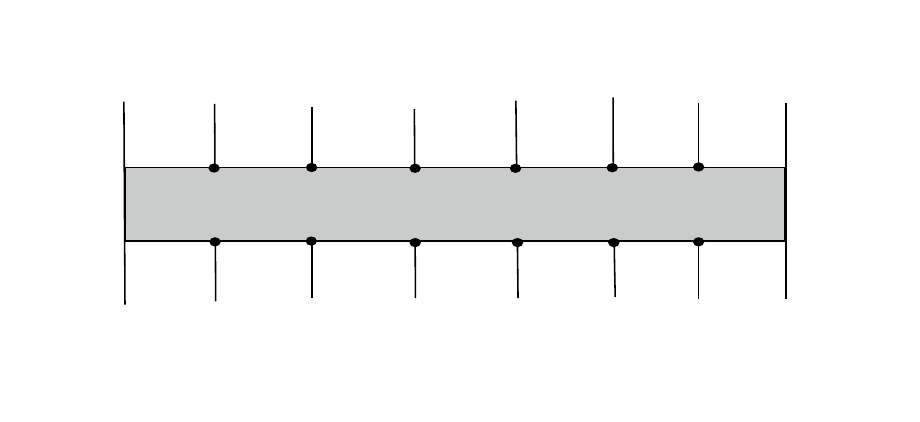}
\end{figure}

We require that the orientation of $\gamma$ is consistent with the counterclockwise orientation of the boundary of $\beta$ where it coincides with that boundary at the pair of arcs $\beta([0,1]\times\{0,n+1\})$.

We define a caterpillar knot to be a knot equipped with several disjoint caterpillar bands. The \emph{band crossings} of a caterpillar knot are the places where the knot passes through the interior of one of the caterpillar bands. We see that the total number of band crossings is equal to the sum of the ranks of the caterpillar bands. We define a \emph{labeled caterpillar knot} to be a caterpillar knot where the band crossings are labeled $1$ through $n$, and each caterpillar band has a specified sign in $\{+1,-1\}$. The total number of band crossings, $n$, is called the rank of the labeled caterpillar knot.

Finally, we define $K^\Delta_n$ to be the set of rank $n$ labeled caterpillar knots up to isotopy, such that every caterpillar band has rank at least $1$. The corners of these $n$-cells will then correspond to the knots obtained by applying clasp surgery to the bands based on their sign, and then pushing the strands that go through the band crossings off of the clasp in all $2^n$ possible ways. This gives us an $n$-cube of knots where the edges correspond to delta moves. To define this complex through our categorical language, let $a: C_m\to C_n$ be a cube map with corresponding ordered pair $(f,s)$. We define $K^\Delta(a^{op})$ to be the transformation on labeled caterpillar knots which is specified as follows.
\begin{itemize}
\item[1)] Band crossings with label $i\in [n]\setminus\im(f)$ are pushed off of their corresponding bands in the direction of their band's normal vector if $s(i) = 0$, and in the direction opposite to their band's normal vector if $s(i) = 1$. See below. (We use the standard convention that the normal vector of a surface points towards us when the surface's orientation is counterclockwise.)
 \begin{figure}[h]
\caption{Band crossings are pushed off of their band based on the value of $s$. }
\centering
\includegraphics[scale = 0.8]{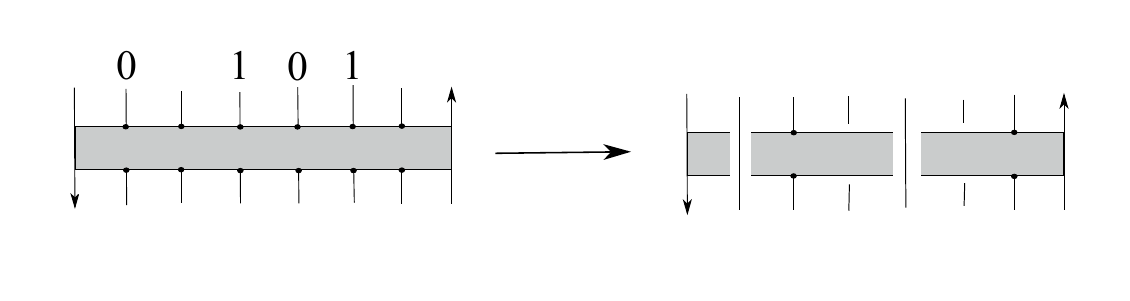}
\end{figure}
\item[2)] Band crossings with label $i\in\im(f)$ have their label replaced with $f^{-1}(i)$.
\item[3)] If after (1) and (2), there are any caterpillar bands of rank zero, replace those bands with clasps of the corresponding sign, as depicted below.
 \begin{figure}[h]
\caption{Rank zero bands get replaced with clasps where the crossings of the clasp have their sign equal to that of the band.}
\centering
\includegraphics[scale = 0.8]{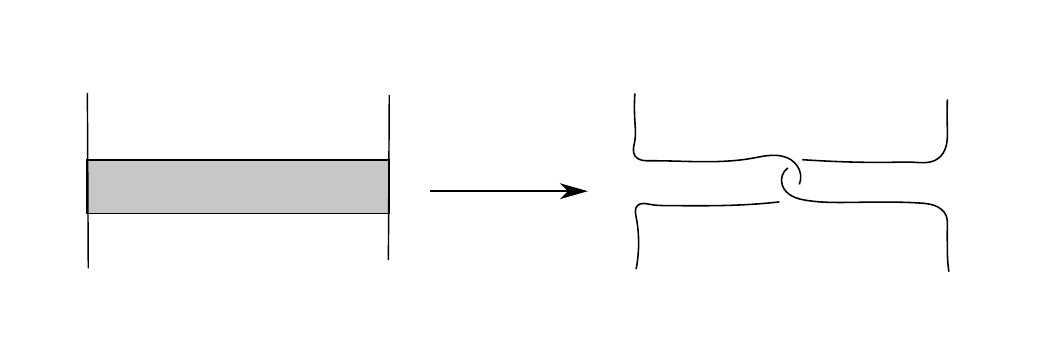}
\end{figure}
\end{itemize}

This describes our cubical complex $K^\Delta$.
\end{dfn}

\begin{lem}
The map $k$ from looms to knots can be extended to a natural transformation $$k: LM^\Box\to K^\Delta$$
\end{lem}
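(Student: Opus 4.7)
The plan is to define the component $k_n \colon LM^\Box_n \to K^\Delta_n$ for each $n$ and then verify naturality. For a pair $(g, \phi) \in LM^\Box_n$ with $g \colon p \to q$ and $\phi \colon t(q) \setminus \im(t(g)) \to [n]$, I will construct a rank $n$ labeled caterpillar knot as follows. Choose a representative loom for $q$ and draw its knot via the usual loom construction. For each bar symbol $c$ of $q$, let $T_c \subseteq t(q) \setminus \im(t(g))$ be the set of distinguished thread symbols targeting $c$. If $T_c$ is nonempty, replace the clasp $c$ together with the local pictures of the distinguished loops at $c$ by a caterpillar band of rank $|T_c|$ and sign matching $c$: the two long edges of the band occupy the two strands of the former clasp, the band crossings appear at the heights formerly occupied by the threads of $T_c$ and inherit their labels from $\phi$, and any non-distinguished threads at $c$ are redrawn as loops on the outside of the band, positioned so that collapsing the band to a clasp restores the original loops of the loom around $c$. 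This assembles $k_n(g,\phi)$.

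Well-definedness on isomorphism classes of pairs and on stabilization equivalence of looms is straightforward: stabilization attaches a detached $\pm$-clasp carrying no thread symbols and thus produces no caterpillar band, while isomorphisms of subdiagrams yield diffeomorphic local pictures. The substantive content is naturality. Given a cube map $a \colon C_m \to C_n$ with associated pair $(f, s)$, I need $k_m \circ LM^\Box(a^{op}) = K^\Delta(a^{op}) \circ k_n$. On the loom side, $LM^\Box(a^{op})$ absorbs the distinguished threads indexed by $\phi^{-1}s^{-1}\{1\}$ back into the image of the new $g'$ (so they re-enter the loom as ordinary loops), deletes those indexed by $\phi^{-1}s^{-1}\{0\}$, and relabels the remaining distinguished threads via $f^{-1}\phi$. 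On the caterpillar-knot side, $K^\Delta(a^{op})$ pushes off the band crossings with labels in $[n] \setminus \im(f)$, in the normal direction when $s(i) = 0$ and in the anti-normal direction when $s(i) = 1$, relabels the band crossings whose labels lie in $\im(f)$, and then collapses any rank zero bands to clasps of matching sign. Fixing the convention that pushing in the anti-normal direction places the strand on the outside of the band (so that after collapse it forms a loop around the new clasp) makes the correspondence term by term tautological: $s(i) = 1$ matches retention of the thread as a loop, $s(i) = 0$ matches deletion, and the relabelings agree on the nose.

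The main obstacle I expect is verifying that the collapse of a rank zero band to a clasp faithfully reproduces the clasp-plus-loops configuration prescribed by $LM^\Box(a^{op})(g,\phi)$ when a single clasp $c$ of $q$ carries several distinguished threads that are resolved in mixed directions. This requires matching the vertical ordering of band crossings along the caterpillar band to the height ordering of the corresponding loom thread symbols, and checking that the non-distinguished loops, which sit outside the band, remain correctly linked with the new clasp and with any threads retained via $s(i) = 1$. The prototype is the second kind of $2$-cell from the earlier figure, in which two distinguished threads at a common clasp must commute; once this case is settled, the general verification reduces inductively to single-crossing pushes and single relabelings, each of which is a purely local move matching the loom-side operation by construction.
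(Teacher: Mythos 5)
Your construction of $k_n$ (bands only at bar symbols targeted by distinguished threads, band crossings at the distinguished loops with labels from $\phi$, signs inherited from the bar symbols) and your naturality check (deletion for $s(i)=0$, retention as an ordinary loop for $s(i)=1$, relabeling along $f$, collapse of rank-zero bands) is essentially the paper's own proof, stated at a comparable level of geometric informality. The only organizational difference — you reduce naturality to elementary pushes and relabelings rather than comparing the general formula for $LM^\Box(a^{op})(g,\phi)$ directly as the paper does — is harmless given functoriality of both complexes, so the proposal is correct and matches the paper's approach.
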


\begin{proof}
It is clear how $k$ acts on the 0-cells, but we need to specify how $k$ maps $LM^\Box_n\to K^\Delta_n$. To do this, let $(g,\phi)$ be a pair representing an element of $LM^\Box_n$. We have $g: \ell_1\to \ell_2$ so that $S_g$ is a set of $n$ thread symbols, and we have that $\phi: S_g\to [n]$ is a bijection. Let $B$ be the set of bar symbols of $\ell_2$ such that some thread symbol of $S_g$ has that band symbol as a target. Then, we construct a labeled caterpillar knot $k(g,\phi)$ by replacing the clasps corresponding to the bar symbols in $B$ with bands, and making the loops corresponding to the thread symbols in $S_g$ go through those bands as band crossings.

 \begin{figure}[h]
\caption{An example of $k$ applied to a cube corresponding to the loom map from the loom  $-_2^1\; |_+\;-_1^2\;|_- \;0_1^3$ to the loom $-_2^1\;+_2^4\;|_+\;0_2^3\;-_1^2\;|_-\;0_1^5$. This yields a labeled caterpillar knot with one band. }
\centering
\includegraphics[scale = 0.6]{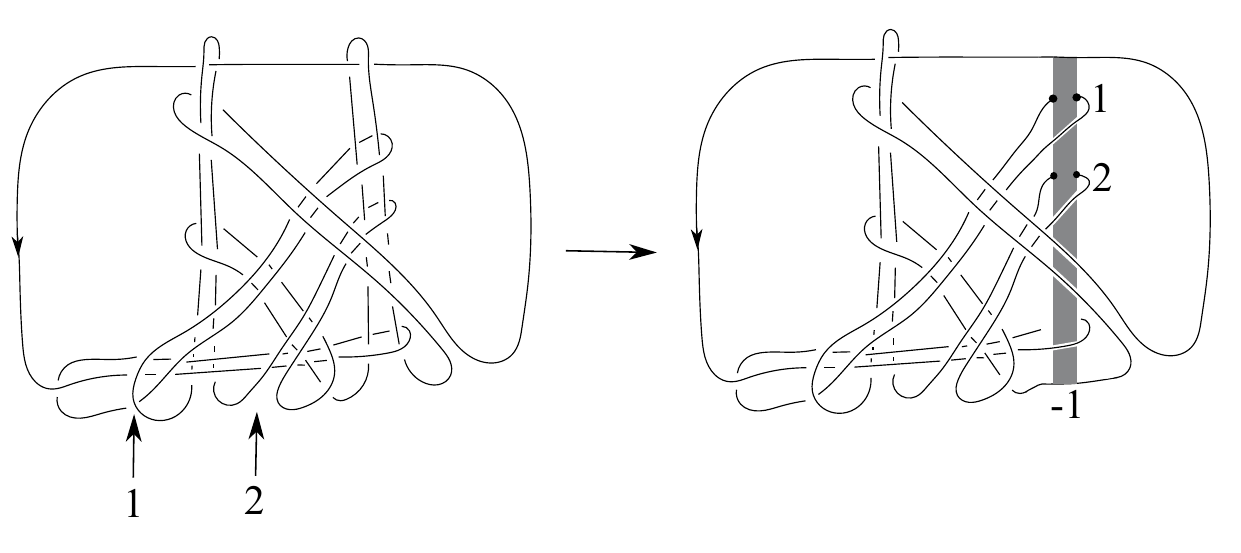}
\end{figure}

To verify the naturality of this map, let $a: C_m\to C_n$ be a cube map with corresponding ordered pair $(f,s)$. We want to show that $ kLM^\Box_n(a^{op}) = K_n^\Delta(a^{op})$. Using the definition of the cubical complex of a diagram category, we have that $LM^\Box_n(a^{op})(g,\phi)$ is equal to the isomorphism class of the following ordered pair.$$ (\iota_{ (\im (t(g)) \cup (s\phi)^{-1}\{1\},t(\ell_2) \setminus (s\phi)^{-1}\{0\})}, f^{-1}\phi|_{\phi^{-1}(\im(f))} t(\iota_{t(\ell_2) \setminus (s\phi)^{-1}\{0\}})) $$
Applying $k$ to this, we get a labeled caterpillar knot which differs from $k(g,\phi)$ in the following ways:
\begin{itemize}
\item[1)]  The loops corresponding to thread symbols in $S_g$ with label $i$ such that $s(i) = 0$ are no longer present.
\item[2)] The loops corresponding to thread symbols in $S_g$ with label $i$ such that $s(i) = 1$ no longer represent band crossings, and instead go around the corresponding band or clasp as they normally would.
\item[3)] The bands corresponding to bar symbols for which no thread symbol in $\phi^{-1}\im(f)$ has that bar symbol as a target are now clasps instead of bands.
\item[4)] Any label in the image of $f$ has been replaced by its inverse image under $f$.
\end{itemize}

We can now see that the result is exactly the labeled caterpillar knot $K_n^\Delta(a^{op})(k(g,\phi))$. Thus, $k$ is a natural transformation. 

\end{proof}

\begin{thm}
$(LM,k)$ is a diagram system for $K^\Delta$. 
\end{thm}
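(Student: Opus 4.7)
The plan is as follows. The category $LM$ is a diagram category by construction, and the preceding lemma exhibits $k:LM^\Box\to K^\Delta$ as a natural transformation, so the only remaining condition in the definition of a diagram system is the surjectivity of $k_0:LM^\Box_0\to K_0^\Delta$. Since a rank-zero labeled caterpillar knot has no bands, $K_0^\Delta$ is just the set of isotopy classes of knots, and the task reduces to showing that every knot is isotopic to $k(\ell)$ for some loom $\ell$.

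My approach would be to invoke the theorem of Murakami and Nakanishi that every knot can be reduced to the unknot by a finite sequence of delta moves. Reading such an unknotting sequence for a knot $K$ in reverse exhibits $K$ as the result of performing finitely many inverse delta moves on the unknot, each of which inserts a clasp or routes a strand through an existing clasp. I would then assemble a loom $\ell$ along this reversed sequence: whenever a new clasp appears I would insert a bar symbol $|_\pm$ with the matching sign, and each inserted strand would contribute a thread symbol whose target is the corresponding bar, whose type in $\{+,-,0\}$ is determined by the twist of the inserted loop around the clasp, and whose height is the next available superscript, so that the height ordering reflects the chronological order of the moves. By construction, $\ell$ records precisely the combinatorial data of the clasps and threaded strands introduced by the inverse moves, and the knot-building procedure described in the paper, applied to $\ell$, produces a knot isotopic to $K$.

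The main obstacle is the geometric normalization. A loom insists that all clasps be vertical parallel bars arranged in a horizontal row and that all loops emerge from below the base unknot, whereas a generic unknotting sequence produces clasps of arbitrary shape, position, and orientation. To bridge this gap I would use an ambient isotopy of each intermediate knot to straighten the clasps into vertical bars, shepherd them into a common row, and reroute each loop so that it exits the base circle from below and encircles its bar at the appropriate height; stabilization equivalence can be used throughout to insert or delete trivial bars to accommodate the rearrangement. A careful write-up would verify that this normalization can always be carried out, either by induction on the length of the unknotting sequence or, alternatively, by starting from a clasp diagram representation of $K$ as in Example \ref{claspdiag} and dragging the two endpoints of each chord together on the base circle so that the chord lifts to a loop entering from below a canonical vertical bar.
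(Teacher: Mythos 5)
Your reduction is the same as the paper's: $LM$ is a diagram category by construction, the preceding lemma gives naturality of $k$, and the whole content of the theorem is the surjectivity of $k_0\colon LM^\Box_0\to K^\Delta_0$, i.e.\ that every knot is $k(\ell)$ for some loom $\ell$. But your argument for surjectivity has a genuine gap, in two places. First, the translation of a delta-move unknotting sequence into a loom does not work as described: a delta move passes a strand through a clasp formed by two strands of the knot itself, and neither the move nor its inverse ``inserts a clasp,'' so there is no well-defined rule attaching a bar symbol to a step of the reversed Murakami--Nakanishi sequence. In a loom the bar symbols play the role of clasps produced by crossing changes (clasp surgeries) that unknot the knot --- undoing the targeted clasps unknots $k(\ell)$ --- while it is the thread symbols that carry the delta-move structure (under $k$ they become band crossings of caterpillar bands). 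The paper accordingly starts from a crossing-change unknotting sequence: Lemma 7 follows the homotopy and attaches a band at each self-intersection time, yielding a parallel-banded unknot with $u(K)$ bands on which clasp surgery returns $K$. Your delta-move bookkeeping records data of the wrong kind, and it also would not yield the bar-count statement (Theorem 4) that the paper needs later.

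Second, the step you defer --- ``a careful write-up would verify that this normalization can always be carried out'' --- is precisely the hard core of the theorem, not routine isotopy bookkeeping. Rerouting everything so that the clasps are parallel vertical bars and all remaining strands are loops entering from below at prescribed heights cannot be done move-by-move while staying in loom format: sliding a loop past another loop, around a bend, or through a crossing forces correction terms (extra $0,\pm$ thread symbols with shifted heights), which is exactly the content of the paper's Lemmas 3--6, and organizing those corrections is what the machinery of comb diagrams and $W$-twisted looms (Theorems 6 and 7) is built for. Your alternative of dragging the chord endpoints of a clasp diagram together faces the same obstruction: clasp diagram chords may interleave and have arbitrary height order, whereas a loom's bars are parallel and all interaction with them must come from below, and nothing in the sketch explains how to resolve the interleaving. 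So the reduction is right, but the surjectivity proof as proposed does not go through.
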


To prove this theorem, all we need to do is prove that every knot can be represented by a loom. The proof of this is quite complex, however, so it has been left to the end of this section. We will actually prove the following stronger theorem. 

\begin{thm}
If a knot has unknotting number $n$, then it can be represented by a loom with exactly $n$ bar symbols. 
\end{thm}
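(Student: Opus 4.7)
The plan is to convert a minimal unknotting sequence for $K$ into a loom with $n$ bar symbols. The first step is to re-present $K$ as a clasp configuration. An unknotting number $n$ realization of $K$ produces, via the standard correspondence from Example \ref{claspdiag}, a clasp-diagram presentation of $K$ with exactly $n$ clasps: $K$ is obtained from a round horizontal unknot $U\subset\R^2\times\{0\}$ by attaching $n$ pairwise disjoint signed clasps $C_1,\dots,C_n$, each a local two-crossing gadget that accounts for one of the chosen crossing changes.

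The second step is an isotopy into loom-standard form. Since the clasps are disjoint, small, and localized away from most of $U$, an ambient isotopy of $\R^3$ rearranges them as $n$ parallel vertical ``staples'' above the $xy$-plane, occupying disjoint narrow vertical columns at mutually distinct heights along $z$. This isotopy drags portions of $U$ along with it, so $U$ is no longer planar: between the points where $U$ enters and exits the clasp columns, $U$ makes excursions into the upper half-space that wind around the columns.

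The third step cuts $U$ into loom symbols. Arcs of $U$ that remain in the horizontal plane contribute no symbols. Each upward excursion of $U$ is a proper arc in the complement of $n$ vertical lines in the upper half-space, whose fundamental group rel the plane is free of rank $n$, generated by meridians of the columns. After a further isotopy of $U$ in its complement, each excursion pinches into a concatenation of elementary meridional detours, each wrapping a single clasp once. Each such detour contributes one thread symbol $\sigma_i^j$, with target $i$ recording the wrapped clasp, label $\sigma\in\{+,-,0\}$ determined by the framing of the detour, and height $j$ encoding its relative vertical position. Reading the resulting bar and thread symbols in cyclic order around $U$ yields a loom with exactly $n$ bar symbols representing $K$.

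The main technical obstacle lies in the third step, and specifically in the twist bookkeeping: one must check that each elementary meridional detour can be encoded as a single thread symbol in the restricted alphabet $\{+,-,0\}$ rather than requiring arbitrary integer framings. The fix is to absorb large twists by stacking several thread symbols for the same target at distinct heights, and to use the stabilization equivalence $\simeq$ to introduce trivial auxiliary bars whenever needed so that the heights can be assigned consistently and framing discrepancies reconciled. With these combinatorial moves available, the isotopy-to-loom conversion described above goes through and produces the claimed loom with exactly $n$ bar symbols.
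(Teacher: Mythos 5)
Your steps 1 and 2 are, read charitably, the same reduction the paper makes: Lemma 7 converts a minimal unknotting sequence into clasp surgery on an unknot with $n$ bands, and the first half of the proof of Theorem 7 standardizes the bands by an ambient isotopy at the cost of letting the unknot tangle arbitrarily around them. (Note, though, that your appeal to the ``standard correspondence'' of the clasp-diagram example is misleading: clasp diagrams attach clasps in a fixed standardized pattern along chords of a disk, and a knot of unknotting number $n$ need not admit such a presentation with $n$ chords; what you actually get from an unknotting sequence is $n$ arbitrarily embedded bands on an unknot, which is precisely Lemma 7.)

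The genuine gap is your step 3, which is where all of the real work in the paper lives. Your justification is that $\pi_1$ of the complement of the $n$ vertical columns is free on meridians, so each excursion ``pinches into a concatenation of elementary meridional detours'' after a further isotopy. That is a homotopy-level statement and cannot deliver the isotopy you need: the excursions are embedded, mutually disjoint arcs that can be knotted and linked with each other and with the rest of $U$, and an isotopy in the complement of the clasps cannot rewrite each excursion independently into standard meridional loops---doing so would in general change the isotopy class of the configuration. A loom only permits loops that rise from the bottom, encircle a single bar once at a prescribed height, and interact with each other solely through the height ordering; converting an arbitrary tangled picture into that normal form is exactly the content of the paper's comb-diagram and $W$-twisted-loom machinery. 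There, the tangling of $U$ is recorded as a comb diagram $W$, and Lemmas 3--6 show that thread loops can be slid past crossings, maxima, minima, and other loops only at the cost of inserting specific compensating thread symbols (not merely framing corrections), with Theorem 6 then reducing $W$ to the trivial comb diagram by induction over the comb-diagram moves. Your proposed fix---absorbing twists by stacking thread symbols and introducing ``trivial auxiliary bars'' via stabilization---does not address this interaction problem, and inserting extra bars would also conflict with the conclusion that the loom has exactly $n$ bar symbols, since stabilization only adds bars at the ends of the word. So the skeleton of your argument matches the paper's, but the passage from the standardized-clasp picture to an actual loom is asserted rather than proved, and the $\pi_1$ argument you offer in its place does not fill that hole.
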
 

For now, though, we will focus on using this fact to prove the following theorem. 

\begin{thm}
For all $n$, the group $U_n(K^\Delta)$ is finitely generated. 
\end{thm}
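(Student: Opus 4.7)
The plan is to apply Theorem 1 to the diagram system $(LM, k)$ for $K^\Delta$ established in the previous theorem. Theorem 1 immediately gives that $U_n(K^\Delta)$ is generated by the 0-cells $k_0(\ell)$ as $\ell$ ranges over isomorphism classes of looms with $|\ell| \leq n$, that is, looms with at most $n$ thread symbols. Since generators are indexed by 0-cells of $K^\Delta$, which are knot isotopy classes, finite generation of $U_n(K^\Delta)$ reduces to proving that the set of knots $\{k(\ell) : |\ell| \leq n\}$ is finite.

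To bound this set, the key structural observation is that a bar symbol of a loom $\ell$ which is not the target of any thread symbol contributes an isolated vertical clasp to $k(\ell)$ through which no loop passes. Such a clasp is a trivial local feature of the underlying circular unknot and can be removed by ambient isotopy without changing the isotopy class of $k(\ell)$. After iteratively deleting such bare bars and correspondingly reindexing the target subscripts of the remaining threads, any loom $\ell$ with $|\ell| \leq n$ can be replaced by a loom $\ell'$ with $k(\ell') = k(\ell)$ in which every bar is the target of at least one thread. Such a reduced $\ell'$ then has at most $n$ bars and at most $n$ threads, so every target subscript and every height superscript appearing in $\ell'$ lies in $\{1,\dots,n\}$. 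Consequently $\ell'$ is a sequence of at most $2n$ symbols drawn from the finite alphabet $\{+_i^j, -_i^j, 0_i^j : 1 \leq i, j \leq n\} \cup \{|_+, |_-\}$, and there are only finitely many such sequences.

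The main obstacle in carrying out this plan is justifying the topological triviality of a bar symbol with no threads targeting it. This requires unpacking the construction of $k(\ell)$ from the loom symbols and verifying that the vertical clasp associated to an untargeted bar is unlinked from the rest of the diagram, hence undoable by Reidemeister moves, and that the renumbering of bar targets after such a deletion preserves the knot isotopy class. Once this reduction is in hand, the finite enumeration of reduced looms of order at most $n$ produces a finite generating set for $U_n(K^\Delta)$, completing the proof.
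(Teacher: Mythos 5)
Your argument is correct in outline, but it reaches the finite generating set by a genuinely different route than the paper. The paper deduces Theorem 5 from the quantitative representability result (Theorem 4: a knot of unknotting number $m$ is $k(\ell)$ for a loom $\ell$ with exactly $m$ bar symbols) together with the easy converse that a loom with at most $n$ thread symbols represents a knot of unknotting number at most $n$; this produces the filtration $A_{n,1}\subseteq A_{n,2}\subseteq\cdots$ by images of $U_n(L_m^\Box)$ and the identity $A_{n,n}=U_n(K^\Delta)$. You instead bypass Theorem 4 at this step and prune each generator directly: delete untargeted bar symbols from a loom of order at most $n$ without changing $k(\ell)$, leaving at most $n$ bars and $n$ threads, hence finitely many words. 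Both arguments land on essentially the same finite generating set (looms with at most $n$ threads and $n$ bars), and both still rest on Theorem 3 (surjectivity of $k_0$), which the paper only obtains via Theorem 4, so the twisted-loom machinery is not avoided overall. What the paper's detour buys is precisely the unknotting-number filtration $A_{n,m}$, which is the point of Conjecture 1; what your route buys is a shorter, more self-contained count of generators. One caution on the step you flag as the main obstacle: ``unlinked, hence undoable by Reidemeister moves'' is too quick as stated, since a clasp tangle is not trivial rel boundary of a ball containing only its two fingers --- the unhooking genuinely uses the free tips of the fingers and the fact that, in the loom construction, a thread's loop passes between the fingers only of its \emph{target} clasp and nothing runs inside the fingers of an untargeted one. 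Granting that routing property of the construction (which is visible in the figures and is the same geometric fact the paper uses implicitly when it asserts that undoing the targeted clasps of a loom ``completely undoes the knot''), your reduction is sound, including the check that the reindexed word is again a loom.
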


\begin{proof}[Proof of Theorem 5.]
Let $L_m$ be the full subcategory of $LM$ consisting of only those looms which are stabilization equivalent to a loom with at most $m$ bar symbols. This is a diagram category as well. From Theorem 2, we can deduce that the sequence of abelian group homomorphisms
$$ U_n(L_1^\Box)\to U_n(L_2^\Box) \to U_n(L_3^\Box) \to ... $$
is a sequence of injective maps that form a filtration for $U_n(LM^\Box)$. Furthermore, there are only finitely many stabilization equivalence classes of looms in $L_m$ with at most $n$ thread symbols. Therefore, Theorem 1 tells us that $U_n(L_m)$ is finitely generated for all $n$ and $m$. Thus, we have constructed a filtration for $U_n(LM^\Box)$ consisting of finitely generated free abelian groups. Let $A_{n,m}$ denote the image of $U_n(L_m^\Box)$ in $U_n(K^\Delta)$ under $k$. This produces a sequence of finitely generated abelian groups $A_{n,1}\subseteq A_{n,2} \subseteq A_{n,3} \subseteq... U_n(K^\Delta)$ such that $\bigcup_{m = 1}^\infty A_{n,m} = U_n(K^\Delta)$. Therefore, given any knot $x\in K^\Delta_0$, we must have that $x$ is in $A_{n,m}$ for some $m$. In fact, we claim that $x$ is in $A_{n,u(x)}$ where $u(x)$ is the unknotting number of $x$. To see why this is true, we can apply Theorem 4. Any knot $x$ can be represented by a loom with $u(x)$ bar symbols, and therefore must be in $A_{n,u(x)}$. Furthermore, Theorem 1 tells us that $U_n(K^\Delta)$ is generated by knots that can be represented by looms with at most $n$ thread symbols. Therefore, to prove that $A_{n,n} = U_n(K^\Delta)$, it suffices to prove that every knot represented by a loom with at most $n$ thread symbols has unknotting number at most $n$. This is easy to prove, because in a loom with at most $n$ thread symbols, there are at most $n$ bar symbols which are the target of some thread symbol. By undoing the clasps corresponding to those bar symbols, it completely undoes the knot, giving the unknot in at most $n$ crossing changes. Thus, we have shown that $A_{n,n} = U_n(K^\Delta)$, so $U_n(K^\Delta)$ is finitely generated. More precisely we have shown that $U_n(K^\Delta)$ is generated by knots that can be represented by a loom with at most $n$ thread symbols and at most $n$ bar symbols, of which there are finitely many. This gives a concrete, but very large, upper bound on the dimension of $U_n(K^\Delta)$.
\end{proof}

Explicitly computing the abelian groups $U_n(K^\Delta)$ is an active area of research being pursued by the author. There is no obvious system of local equivalence moves for looms, so we have to study nonlocal operations. It is difficult to find a method to calculate these abelian groups which is computationally plausible, especially since the number of looms with $n$ thread symbols and $n$ bar symbols grows \emph{extremely} quickly in $n$.  For instance, when $n=2$, the number of such looms is already $1728$. However, computing $U_n(K^\Delta)$ for small $n$ does not seem to be an intractable problem. It is a promising avenue for future research, especially considering the following observations.

In our proof that $U_n(K^\Delta)$ is finitely generated, we defined a filtration $A_{n,1}\subseteq A_{n,2}\subseteq ...$ and we proved that $A_{n,n} = U_n(K^\Delta)$. It should be noted that if this filtration is nontrivial in the sense that $A_{n,1}\neq U_n(K^\Delta)$, then $U_n(K^\Delta)$ is not generated by knots of unknotting number $1$. Given the structure of looms, it seems highly likely that the filtration will indeed be nontrivial. This would mean that delta move finite type invariants would give us lower bounds on unknotting number. We make the following conjecture.

\begin{cnj}
For some $n$, we have that $A_{n,1}\neq U_n(K^\Delta)$. Therefore, there exists a nontrivial delta move finite type invariant that vanishes on knots of unknotting number 1. Thus, delta move finite type invariants yield nontrivial lower bounds for unknotting number. 
\end{cnj}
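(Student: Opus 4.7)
The natural strategy is computational: pick the smallest $n$ for which one can feasibly compute both $U_n(K^\Delta)$ and the subgroup $A_{n,1}$, and then compare them. The plan is to start with $n=2$, since Theorem~5 tells us $U_2(K^\Delta)$ is generated by knots represented by looms with at most $2$ thread symbols and $2$ bar symbols (the figure $1728$ quoted by the author suggests this is near the edge of what is tractable by hand and well within computer range). The subgroup $A_{2,1}$ is generated by classes of knots coming from looms with a single bar symbol; if the quotient $U_2(K^\Delta)/A_{2,1}$ turns out to be nontrivial we are done, and otherwise we move to $n=3$.

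To carry this out, one first enumerates stabilization equivalence classes of looms of the appropriate size, grouped by their number of bar symbols, to produce the generating sets of $A_{n,m}$ for each $m\leq n$. The hard part is then assembling a complete set of relations. By Theorem~2 applied to the diagram system $(LM,k)$, $U_n(K^\Delta)$ is isomorphic to $Ab(LM_{\leq n})/s(\ker U_n(k_0))$, so the task becomes identifying all pairs of looms whose associated knots agree and translating each such coincidence $k(\ell_1)=k(\ell_2)$ into the element $s(\ell_1)-s(\ell_2)$ of $Ab(LM_{\leq n})$. Once a presentation is in hand, computing the quotient by the subgroup $A_{n,1}$ is routine linear algebra.

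The principal obstacle, as the author emphasizes, is that looms have no known system of local equivalence moves playing the role of Reidemeister moves for Gauss diagrams or the clasp moves of \cite{clasp}. Finding the relations therefore reduces to an a priori undecidable knot recognition problem. I would attack this in two complementary ways: (i) convert each candidate loom to a standard PD code and use a battery of classical invariants (HOMFLY, Khovanov, signature, hyperbolic volume) together with targeted Reidemeister search to certify isotopy in one direction and non-isotopy in the other; and (ii) prove ``loom moves'' ad hoc by exhibiting explicit isotopies of the caterpillar-knot construction, for instance band slides across clasps and exchanges of commuting thread symbols, with the hope that a finite list of such moves suffices in low rank. Completeness of the relation set can then be checked by verifying that the putative $U_n(K^\Delta)$ maps consistently to every known delta finite type invariant.

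A parallel, and perhaps shorter, route is to bypass the full computation and simply exhibit a single nontrivial delta finite type invariant that vanishes on every unknotting number $1$ knot. Using \cite{band}, every additive delta invariant of rank $n$ is an ordinary Vassiliev invariant of rank $2n$, so one can scan known low-order Vassiliev invariants for candidates whose values on $\pm 1$-surgery descriptions of the unknot (i.e.\ knots of unknotting number $1$) are forced to vanish, possibly after orthogonalizing against the span of unknotting-number-$1$ knots within the universal Vassiliev group. Either route, if successful, would not only settle the conjecture but produce an explicit formula for an unknotting number lower bound, so the real obstacle is not conceptual but rather the combinatorial explosion and the absence of local moves that would make loom equivalence mechanical.
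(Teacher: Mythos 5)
The statement you were asked to prove is stated in the paper as a \emph{conjecture}: the author gives no proof, and in fact the surrounding discussion explicitly flags the computation of $U_n(K^\Delta)$ and of the filtration $A_{n,1}\subseteq A_{n,2}\subseteq\cdots$ as open, active research, precisely because looms admit no known complete system of equivalence moves. Your submission reflects this honestly, but it is a research plan, not a proof: no $n$ is treated, no relation set for $s(\ker U_n(k_0))$ is produced (even for $n=2$), and no invariant vanishing on unknotting-number-$1$ knots is exhibited. The central missing ingredient --- a complete, finite, verifiable set of relations among looms of bounded size, or some other certified presentation of $U_n(K^\Delta)$ --- is exactly the obstacle the paper identifies, and your proposal defers it to heuristic knot-recognition and ad hoc ``loom moves'' whose completeness you would only check by consistency against known invariants, which cannot certify that the computed group really is $U_n(K^\Delta)$ rather than a proper quotient. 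An incomplete relation set would only give an upper bound on $U_n(K^\Delta)$, so apparent inequality $A_{n,1}\neq U_n(K^\Delta)$ obtained that way proves nothing; apparent equality proves nothing either.

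Your second route has an additional logical gap. The result you cite from \cite{band} goes in the wrong direction: it says an \emph{additive} delta finite type invariant of rank $n$ is an ordinary Vassiliev invariant of rank $2n$, not that Vassiliev invariants vanishing on unknotting-number-$1$ knots yield delta finite type invariants. Scanning low-order Vassiliev invariants therefore does not by itself produce an element of $\hom(U_n(K^\Delta),\Z)$ killing $A_{n,1}$; you would still need to verify the delta finite type condition independently, and you would also need the vanishing to hold on \emph{all} knots of unknotting number $1$, a statement that is itself not known for any nontrivial Vassiliev invariant. In short, the conjecture remains unproved both in the paper and in your proposal; what you have written is a reasonable outline of how one might attack it computationally, but none of its steps are carried out and the key difficulty (a provably complete presentation of $U_n(K^\Delta)$, or an explicit certified invariant) is left untouched.
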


The remainder of this section is devoted to proving Theorems 3 and 4. To give a rough sketch of the argument, we define something called a $W$-twisted loom, which is a system for representing knots that depends on a choice of \emph{comb diagram}, a kind of knot diagram like structure which is similar to a Morse link presentation. We prove that the set of knots representable by $W$-twisted looms does not depend on the choice of comb diagram $W$. Then, we prove that every knot can be represented by a $W$-twisted loom for some choice of $W$. Therefore, we know that every knot can be represented by a loom, because we can reproduce the theory of ordinary looms from the theory of $E$-twisted looms for the trivial comb diagram $E$.

\begin{dfn}
A comb diagram of rank $n$ consists of a sequence of symbols from the alphabet $$\{\subset_{i}, \supset_{i}, x_i, x_i^{-1}, |_i \}_{i\in \Z_{>0}}$$ where we think of the symbols $\subset_i$ as a creation operators, creating strands in position $i$ and $i+1$, and we think of the symbols $\supset_i$ as an annihilation operators, cobording away the strands in positions $i$ and $i+1$. We think of $x_i$ as a crossing where the strand in position $i$ crosses up over the strand in position $i+1$ as we go from left to right, and $x_i^{-1}$ as the opposite crossing. The symbols $|_i$ should be thought of as representing a band attached to the $i$-th strand, going under all the strands above it and up to infinity. We have various requirements on what constitutes a valid sequence for a comb diagram. They are as follows. 

\begin{itemize}
\item[1)] The sequence must be valid as a Morse link presentation, where we start with a single strand in position 1, and we end with a single strand in position 1. Furthermore, the resulting 1-manifold must have a single connected component. We orient it from its left endpoint to its right endpoint. 
\item[2)] There must be exactly $n$ bar symbols, and they must appear on strands that are oriented from left to right, rather than those that are oriented backwards.
\item[3)] There must be a sequence of substring modifications from the following list that reduces our word to the one consisting of $|_1$ repeated $n$ times. Here, $\varnothing$ denotes the empty sequence, and $2_{i\geq j}$ denotes $2$ if $i\geq j$ and $0$ otherwise. 
\end{itemize}
 $$ x_i\supset_i \leftrightarrow \supset_i,\;\; \subset_ix_i \leftrightarrow \subset_i,\;\; \subset_i\supset_{i+1} \leftrightarrow\varnothing,\;\; \subset_{i+1}\supset_i \leftrightarrow \varnothing $$
$$ x_ix_{i+1}x_i \leftrightarrow x_{i+1}x_ix_{i+1},\;\; x_ix_i^{-1} \leftrightarrow \varnothing,\;\; x_i^{-1}x_i \leftrightarrow \varnothing,\;\; |_{i+1}x_i \leftrightarrow x_i|_{i},\;\; |_ix_i^{-1} \leftrightarrow x_i^{-1}|_{i+1} $$
$$ x_i\supset_{i+1} \leftrightarrow x_{i+1}^{-1}\supset_i,\;\; x_i^{-1}\supset_{i+1} \leftrightarrow x_{i+1}\supset_i,\;\; \subset_{i+1}x_i \leftrightarrow \subset_ix_{i+1}^{-1},\;\; \subset_{i+1}x_i^{-1} \leftrightarrow \subset_ix_{i+1} $$
$$ x_ix_j \leftrightarrow x_jx_i \;\;\; \text{if} \;\;\; j\not\in\{i-1,i,i+1\},\;\;\;\;\;  x_i|_j \leftrightarrow |_jx_i  \;\;\; \text{if} \;\;\; j\not\in \{i,i+1\}$$
$$ x_i \subset_j \leftrightarrow \subset_{j} x_{i+ 2_{i\geq j}}\;\;\; \text{if} \;\;\; j\neq i+1,\;\;\;\;\;  \supset_j x_i \leftrightarrow x_{i + 2_{i\geq j}}\supset_j\;\;\; \text{if} \;\;\; j\neq i+1$$
$$ |_i\subset_j \leftrightarrow \subset_j |_{i + 2_{i\geq j}}, \;\;  \supset_j|_i \leftrightarrow |_{i + 2_{i\geq j}}\supset_j,\;\; \subset_{i+2}\supset_i\leftrightarrow \supset_i\subset_i \leftrightarrow \subset_i\supset_{i+2}$$
$$ \supset_i\subset_j \leftrightarrow \subset_{j+ 2_{j\geq i}} \supset_{i + 2_{i\geq j}} \;\;\; \text{if}\;\;\; i\neq j$$
Note that $|_{i+1}x_i^{-1} = x_i^{-1}|_{i}$ and $|_ix_i = x_i|_{i+1}$ are NOT valid moves, as this would require a strand to cross through the band. Furthermore, commutations like $|_i|_j = |_j|_i$ are not valid moves. The bars are essentially fixed in place. 
\end{dfn}

 \begin{figure}[h]
\caption{An example comb diagram, with word $\subset_2x^{-1}_1x^{-1}_2|_3x_2|_1\supset_2$.}
\centering
\includegraphics[scale = 0.8]{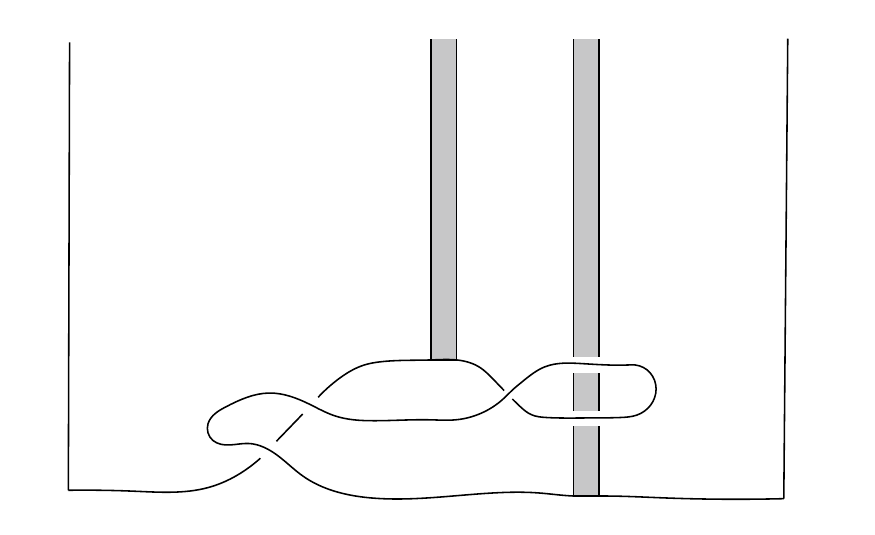}
\end{figure}

We write $E_n$ to denote the comb diagram consisting of $|_1|_1...|_1$ with $n$ bar symbols. We call this the rank $n$ trivial comb diagram. The rank of a comb diagram is the number of bar symbols that appear in it. 

\begin{dfn}
Let $W$ be a rank $n$ comb diagram. We define a $W$-twisted loom to be a sequence of symbols from the alphabet 
$$ \{\subset_{i}, \supset_{i}, x_i, x_i^{-1}, |_i , +_{i,t}^h, -_{i,t}^h, 0_{i,t}^h\}_{i,t,h\in \Z_{>0}}$$
such that the following properties hold. 
\begin{itemize}
\item[1)] If we delete all symbols from $\{+_{i,t}^h, -_{i,t}^h, 0_{i,t}^h\}_{i,t,h\in \Z_{>0}}$, then we are left with $W$. 
\item[2)] Every symbol of the form $\sigma_{i,t}^h$ with $\sigma\in \{+,-,0\}$ has $1 \leq t \leq n$. 
\item[3)] If there are $m$ symbols of the form $\sigma_{i,t}^h$ with $\sigma\in \{+,-,0\}$, then each superscript from $1$ to $m$ appears exactly once.
\end{itemize}
The symbols from $\{+_{i,t}^h, -_{i,t}^h, 0_{i,t}^h\}_{i,t,h\in \Z_{>0}}$ are called thread symbols, like with looms. The additional subscript is the strand of the comb diagram for the thread symbol, and the other two are the target and the height, as with ordinary looms. It should be noted that unlike ordinary looms, we do not have a sign associated to the clasps. 
\end{dfn}

Rather than representing knots, twisted looms represent banded unknots. To obtain a banded unknot from a $W$-twisted loom, we add clasps to the comb diagram in accordance with the thread symbols, just like with ordinary looms. These clasps always go over any strands from the comb diagram, and they link with the bands far away from the comb diagram, so as to not get tangled with it. 

 \begin{figure}[h]
\caption{A $(\subset_2x^{-1}_1x^{-1}_2|_3x_2|_1\supset_2)$-twisted loom with word $\subset_20_{2,2}^1x^{-1}_1x^{-1}_2|_3+_{2,1}^2x_2|_1\supset_2$.}
\centering
\includegraphics[scale = 0.8]{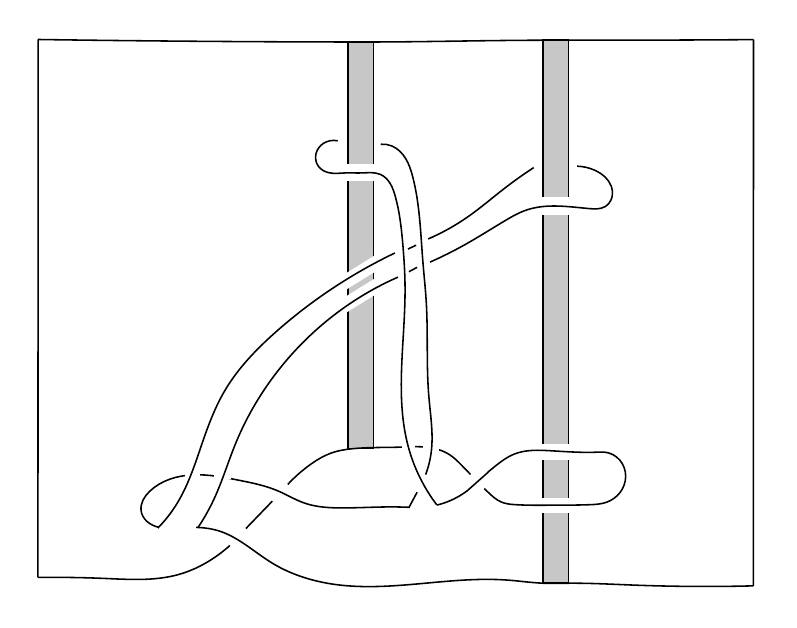}
\end{figure}

For a $W$-twisted loom $\ell$, we write $\beta(\ell)$ to denote the banded unknot represented by $\ell$. 

\begin{lem}
Let $\sigma$ and $\tau$ be symbols in $\{+,-,0\}$, and suppose that we have a $W$-twisted loom of the form $...\sigma_{i,t}^h\tau_{j,s}^w...$, then if $i< j $ and $w < h$, or $j < i$ and $h < w$, we have $$ \beta(...\sigma_{i,t}^h\tau_{j,s}^w...) = \beta(...\tau_{j,s}^w\sigma_{i,t}^h...) $$
and if if $i< j $ and $h < w$, we have $$ \beta(...\sigma_{i,t}^h\tau_{j,s}^w...) = \beta(...\tau_{j,s}^{w+2}\;0_{i,s}^{w+3}-_{i,s}^{w+1}\sigma_{i,t}^h\;0_{i,s}^w+_{i,s}^{w+4}...)^{[w+1,\infty]+4} $$ where the superscript $[w+1,\infty]+4$ means that each thread symbol in the ellipsis with height in the interval $[w+1,\infty]$ has its height incremented by 4, to prevent heights from coinciding. 

Lastly, if $j>i$ and $w>h$ we have
$$ \beta(...\sigma_{i,t}^h\tau_{j,s}^w...) = \beta(...  -_{j,t}^{h+4}0_{j,t}^h\tau_{j,s}^w +_{j,t}^{h+1}0_{j,t}^{h+3}\sigma_{i,t}^{h+2}  ...)^{[h+1,\infty] + 4}$$

Thus, if we have a sequence of thread symbols on the $i$ strand followed by a sequence thread symbols on the $j$ strand, we can transform this into a sequence of thread symbols on the $j$ strand followed by a sequence thread symbols on the $i$ strand while keeping the represented banded unknot constant.
\end{lem}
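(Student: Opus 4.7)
My plan is to prove each of the three cases by exhibiting an explicit ambient isotopy of the banded unknot in $\R^3$ that realizes the claimed equality. Thread symbols encode loops hanging off strands of the comb diagram and linking with bands far away from the comb, so I will think of each $\sigma_{i,t}^h$ geometrically as a small loop attached at a specified position along strand $i$ and climbing up to height $h$ to clasp around the band indexed by $t$, with the twist type $\sigma\in\{+,-,0\}$ determining how it winds.

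For case 1 (the ``compatible'' orderings $i<j$, $w<h$ and its mirror), the two loops live in disjoint vertical strips of space: the higher loop reaches above the lower loop, and the lower loop sits on the strand that is on the opposite side. Consequently, the ambient embedding is the same regardless of which symbol is typed first, since the word order only affects the horizontal positions along the respective strands, and those positions can be freely exchanged by a small isotopy that slides the bases of the loops past each other along their own strands. Here the only check is that no collision or crossing is forced during the slide, which follows from the height inequality $w<h$ combined with the strand inequality $i<j$.

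For the two surgical cases, the obstacle is that the loop corresponding to the higher target genuinely has to pass over or under the other loop. The rewriting in the lemma implements this by inserting a cancelling $+/-$ pair on the strand carrying the newly ``high'' loop, with two $0$ placeholders that serve only to reposition heights so that the intervening loops do not collide. I would verify case 2 ($i<j$, $h<w$) by performing three successive local moves on the diagram: first a Reidemeister-II-type introduction of the cancelling pair $+_{i,s}^{w+1}$ and $-_{i,s}^{w+1}$ on strand $i$ around band $s$ at a height just above $h$; second, a reparametrization of the stack of loops on strand $i$ at heights $w, w+3, w+4$ to create clearance, which is precisely the role of the two $0_{i,s}$ symbols; and third, a slide of $\tau_{j,s}^{w+2}$ past $\sigma_{i,t}^h$ through the channel opened by the previous steps. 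The uniform shift $[w+1,\infty]+4$ applied to the ellipsis is exactly the bookkeeping needed to keep the later thread symbols from landing at the newly used heights $w+1,w+2,w+3,w+4$, and hence their geometric loops are unchanged up to isotopy. Case 3 is entirely analogous, obtained by reflecting the argument across the comb diagram so that the surgical pair is placed on strand $j$ instead of strand $i$.

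The main obstacle is the third step of the surgery: showing that after the $+/-$ pair is introduced and the $0$-placeholders are in place, the loop $\tau_{j,s}^w$ really can be isotoped over $\sigma_{i,t}^h$ without snagging on the bands or on other threads. The cleanest way to handle this is to argue locally in a small ball containing only strands $i$ and $j$, the portion of band $s$ between heights $h$ and $w+4$, and the four auxiliary loops, and to exhibit the isotopy there as the standard ``pass-through'' move using a cancelling clasp pair, noting that both ends of the cancelling pair have target $s$ so they bound an embedded disk that the $\tau$-loop can be pushed across. Once this local picture is drawn and verified, the global statement for the lemma follows by extending the isotopy by the identity outside the ball, and the height-shift on the trailing symbols guarantees that this extension is well-defined.
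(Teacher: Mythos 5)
Your proposal is correct and takes essentially the same route as the paper: the paper also proves the lemma by direct inspection of the represented banded unknots, noting that in the first case the loops do not interact and simply commute, and in the remaining cases exhibiting (in a figure) the explicit isotopy in which the inserted thread symbols form a cancelling configuration around the band that lets the higher loop pass, with the last case obtained as the mirror image. Your verbal description of the pass-through via a cancelling clasp pair localized in a ball is just a written account of the isotopy the paper depicts pictorially.
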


\begin{proof}
We can simply inspect the knots corresponding to the designated sequences and check that they are isotopic. In the first case, when $i< j $ and $w < h$, or $j < i$ and $h < w$, there is no conflict between the loops from the thread symbols, so they commute. In the other cases, the thread symbol on the lower strand has the smaller height, so if we were to simply commute the thread symbols, the loops would pass through each other. The additional terms compensate for this. See the isotopy depicted in figure \ref{comm}. 

 \begin{figure}[h]
\caption{\label{comm}The isotopy from $\sigma_{i,t}^h\tau_{j,s}^w$ to $\tau_{j,s}^{w+2}\;0_{i,s}^{w+3}-_{i,s}^{w+1}\sigma_{i,t}^h\;0_{i,s}^w+_{i,s}^{w+4}$ when $i<j$ and $h < w$.}
\centering
\includegraphics[scale = 0.4]{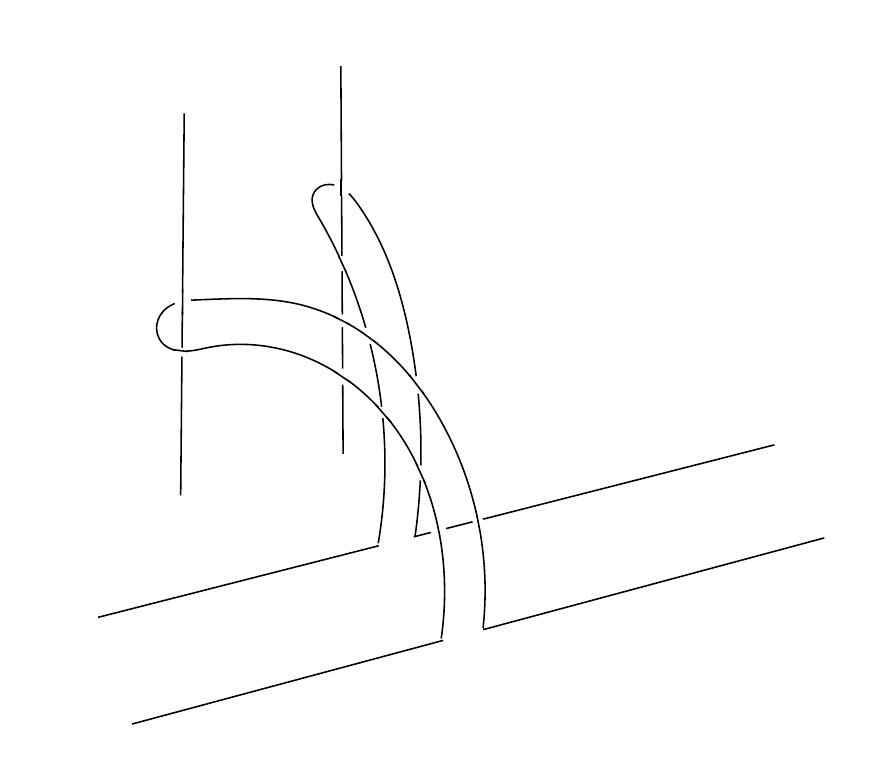}

\includegraphics[scale = 0.4]{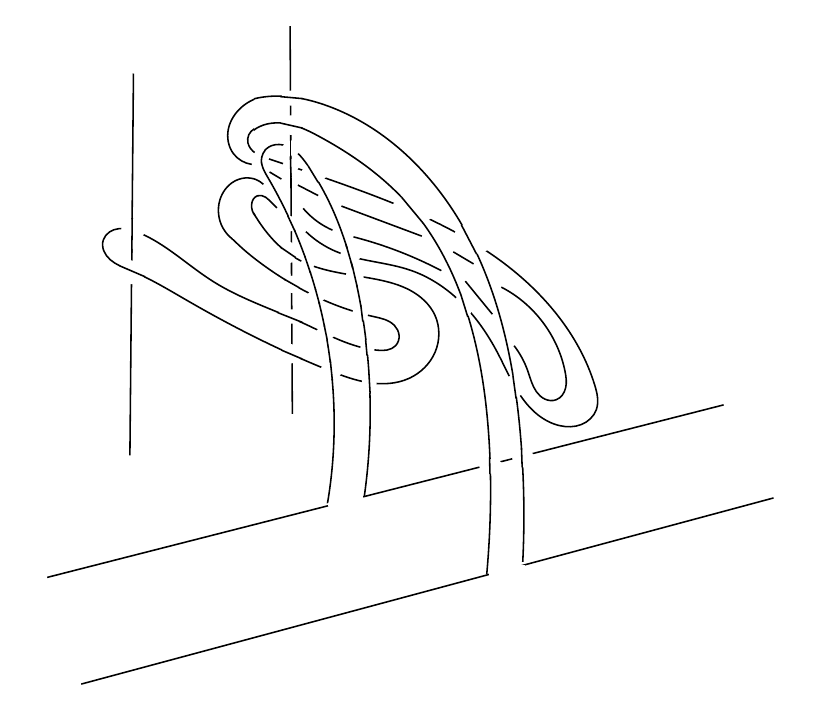}

\includegraphics[scale = 0.4]{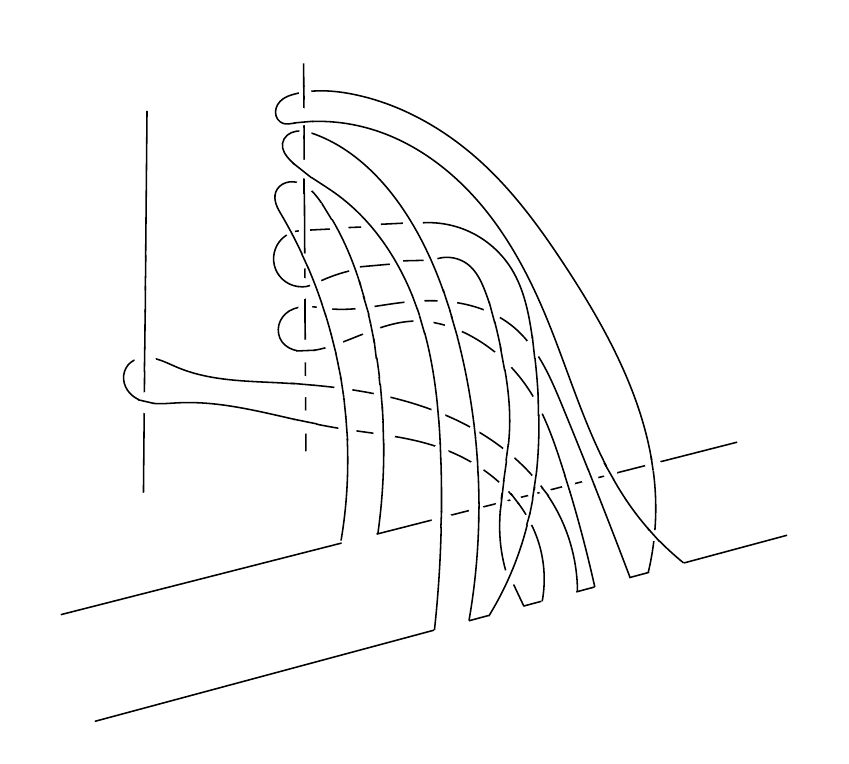}
\end{figure}

For the case when, $j>i$ and $w>h$ we have the mirror image of the isotopy depicted in the figure. 
\end{proof}

\begin{lem}
If $\ell_1$ and $\ell_2$ are $W$-twisted looms that differ by one of the following substring modifications, then $\beta(\ell_1) = \beta(\ell_2)$. Assume $\sigma$ represents some symbol in $\{+,-,0\}$.
$$ |_i\sigma_{j,t}^h \leftrightarrow \sigma_{j,t}^h|_i\;\;\; \text{if} \;\;\; i\neq j, \;\;\;\;\; x_i\sigma_{j,t}^h \leftrightarrow \sigma_{j,t}^hx_i\;\;\; \text{if} \;\;\; j\not\in \{i,i+1\}$$
$$ \supset_i \sigma_{j,t}^h \leftrightarrow \sigma_{j+2_{j\geq i},t}^h\supset_i,\;\;\;\;\; \subset_i\sigma_{j+2_{j\geq i},t}^h \leftrightarrow \sigma_{j,t}^h\subset_i$$
\end{lem}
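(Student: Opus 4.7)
The plan is to verify each of the four substring modifications as an ambient isotopy of the banded unknot, so that $\beta(\ell_1) = \beta(\ell_2)$ in every case. Recall how $\beta(\ell)$ is constructed: one draws the comb diagram $W$ in the plane, reading left to right so that each symbol contributes a small local piece of the strand configuration, and then one attaches a clasp for each thread symbol at the specified strand and height. The far end of each clasp reaches out beyond the comb diagram to link its target band, so within the comb diagram itself the clasp may be assumed to lie in an arbitrarily small tubular neighborhood of a segment of its strand. Since each of the four moves simply swaps two adjacent symbols of the word, it suffices to exhibit a local ambient isotopy in the horizontal region of the diagram corresponding to those two symbols.

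For the first two moves $|_i\sigma_{j,t}^h \leftrightarrow \sigma_{j,t}^h|_i$ (with $i \neq j$) and $x_i\sigma_{j,t}^h \leftrightarrow \sigma_{j,t}^h x_i$ (with $j \notin \{i, i+1\}$), the bar or crossing lies on strand $i$ (or on strands $i$ and $i+1$), while the clasp lies on strand $j$, which is disjoint from those. The clasp, being a thin neighborhood of a segment of strand $j$ with a long tether running outside the diagram, can be slid horizontally past the bar or the crossing without any new linking, giving the required isotopy. For the two remaining moves, the strand carrying the clasp persists on both sides of the creation $\subset_i$ or annihilation $\supset_i$, but its index is shifted by $2_{j\geq i}$, since $\subset_i$ introduces two new strands at positions $i$ and $i+1$ (pushing every strand of index at least $i$ up by two), and $\supset_i$ does the reverse. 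Sliding the clasp along its persistent strand across the creation or annihilation gives the asserted equality, and the index shift $j \mapsto j + 2_{j\geq i}$ is precisely the change of labeling between the two sides.

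The main technical point is this bookkeeping of strand indices, handled uniformly by the shift $j \mapsto j + 2_{j\geq i}$. The only conceivable obstruction would be unintended linking between the clasp and the two strands being created, annihilated, or crossed in the local region. This is ruled out by the convention that clasps link with their target bands only outside the comb diagram, so their interior neighborhoods can be taken small enough to avoid any disjoint local feature entirely. In all four cases, the rigorous verification is best presented by a picture, analogous to those used in the previous lemma.
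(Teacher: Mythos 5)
Your proposal is correct and follows essentially the same route as the paper: the clasp attached to a thread symbol on strand $j$ cannot interact with local features of the comb diagram on other strands, so each of the four swaps is realized by an obvious isotopy, with the shift $j\mapsto j+2_{j\geq i}$ being pure relabeling of strand positions across $\subset_i$ or $\supset_i$. One small imprecision: the clasp cannot literally be confined to a small tubular neighborhood of strand $j$, since it must pass over all strands of position greater than $j$ on its way to the target band; the paper's phrasing---the loop goes over every strand above and crosses none below---is the fact that actually rules out any interaction, and with that substitution your argument matches the paper's.
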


\begin{proof}
A thread symbol on the $j$-th strand corresponds to a loop that goes over all the strands with position $>j$ and which does not cross any of the strands with position $<j$. Thus, the loop will not interact with any of the features of the comb diagram in the other strands. This implies the above commutation relations. 
\end{proof}

\begin{lem}
If $\ell_1$ and $\ell_2$ are $W$-twisted looms that differ by one of the following substring modifications, then $\beta(\ell_1) = \beta(\ell_2)$.

$$ -_{i,t}^h\supset_i \leftrightarrow 0_{i+1,t}^h \supset_i,\;\;\;\;\; 0_{i,t}^h\supset_i \leftrightarrow +_{i+1,t}^h \supset_i,\;\;\;\;\; \subset_i +_{i,t}^h \leftrightarrow \subset_i0_{i+1,t}^h  ,\;\;\;\;\; \subset_i 0_{i,t}^h \leftrightarrow \subset_i-_{i+1,t}^h$$

Furthermore, we have 

$$ \beta(...+_{i,t}^h\supset_i...) =  \beta(...  +_{i+1,t}^h0_{i+1,t}^{h+2}0_{i+1,t}^{h+1}\supset_i  ...)^{[h+1,\infty]+2}$$

$$ \beta(... \subset_i -_{i,t}^h ...) = \beta(... \subset_i 0_{i+1,t}^{h+1}0_{i+1,t}^{h+2}-_{i+1,t}^h ...)^{[h+1,\infty]+2}$$

$$ \beta(...-_{i+1,t}^h\supset_i...) =  \beta(...  0_{i,t}^{h+1}0_{i,t}^{h+2}-_{i,t}^h\supset_i  ...)^{[h+1,\infty]+2}$$

$$ \beta(... \subset_i +_{i+1,t}^h ...) = \beta(... \subset_i +_{i,t}^h0_{i,t}^{h+2}0_{i,t}^{h+1} ...)^{[h+1,\infty]+2}$$

where, as before, the superscript $[h+1,\infty]+2$ denotes incrementing heights in the designated interval by 2 to prevent any two symbols from having the same height. 

Thus, if we have a sequence of thread symbols on the $i$-th strand before a $\supset_i$ or after a $\subset_i$, we can transform it into a sequence on the $i+1$ strand, and vice-versa.
\end{lem}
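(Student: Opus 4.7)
The plan is to verify each substring modification by exhibiting an explicit ambient isotopy of the associated banded unknots, in the spirit of the preceding lemma. Recall that a thread symbol $\sigma_{i,t}^h$ represents a small loop hanging from strand $i$ of the comb diagram that links clasp $t$ at relative height $h$, and that the symbols $\supset_i, \subset_i$ represent a cap joining, or a cup creating, strands $i$ and $i+1$. At such a cap or cup the underlying $1$-manifold reverses orientation, and this is the essential local fact driving the argument.

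For the four simple equivalences in the first display, the isotopy is direct: a loop attached to strand $i$ immediately adjacent to a $\supset_i$ can be slid over the cap onto strand $i+1$, and the orientation reversal at the cap introduces a single positive R1-type twist into the loop. Consequently $-_{i,t}^h$ becomes $0_{i+1,t}^h$ and $0_{i,t}^h$ becomes $+_{i+1,t}^h$ when placed just before $\supset_i$. The two $\subset_i$ cases are the time-reversed analogues and follow from the same local picture read backward. In each case the target $t$ and height $h$ are preserved because only the strand attachment of the loop changes.

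For the four complex equivalences the naive slide would have to produce a doubly twisted $++$ or $--$ loop, which is not in the alphabet, so we must carry along a compensating pair of $0$-twist loops at auxiliary heights. Geometrically, after sliding the $+$ loop across the cap, one regularizes the resulting over-twisted configuration by inserting a small parallel pair of untwisted companion loops whose nested arrangement absorbs the excess twist; these appear in the formula as $0_{i+1,t}^{h+2}0_{i+1,t}^{h+1}$, and the superscript decoration $[h+1,\infty]+2$ simply renumbers the surrounding heights to make room for the two new thread symbols while preserving all relative orderings. The three remaining complex cases are handled by reflecting sign and reversing the direction of time.

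Finally, the concluding summary of the lemma, that any adjacent block of thread symbols on strand $i$ abutting a $\supset_i$ or $\subset_i$ may be transferred wholesale onto strand $i+1$, follows by induction on the length of the block, applying the appropriate single-symbol rule to the thread symbol nearest the cap or cup at each step. I expect the main obstacle to be bookkeeping rather than geometry: one must verify that the height renumberings compose consistently across iterated applications, that the target labels are preserved, and that the auxiliary $0$-pairs introduced in the complex cases do not interfere with subsequent slides of neighboring thread symbols. Each such check reduces to inspection of a local figure analogous to Figure \ref{comm}.
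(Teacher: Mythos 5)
Your proposal is correct and follows essentially the same route as the paper: the four simple equivalences come from the observation that sliding a loop around the bend of a $\supset_i$ or $\subset_i$ adds one unit of twist, and the four remaining equivalences reduce (up to mirror image) to the single isotopy identifying a doubly twisted loop with the triple $+_{i,t}^h0_{i,t}^{h+2}0_{i,t}^{h+1}$, which the paper likewise establishes by inspecting an explicit local picture. Your verbal description of that compensating $0$-pair is exactly the content of the paper's figure, so the two arguments coincide in substance and in level of rigor.
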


\begin{proof}
It is easy to see that sliding a loop around a bend introduces a half-twist in the indicated direction. Thus, the first part of the lemma is obvious.  We then just have to show that a loop with two positive half-twists can be represented by $+_{i,t}^h0_{i,t}^{h+2}0_{i,t}^{h+1}$, and a loop with two negative half-twists can be represented by $0_{i,t}^{h+1}0_{i,t}^{h+2}-_{i,t}^h$. These are mirror images of each other, so just demonstrating one of these isotopies will suffice. The desired isotopy is depicted in the figure below. 

 \begin{figure}[h]
\caption{The isotopy from a double twisted loop to a triple of thread symbols.}
\centering
\includegraphics[scale = 0.8]{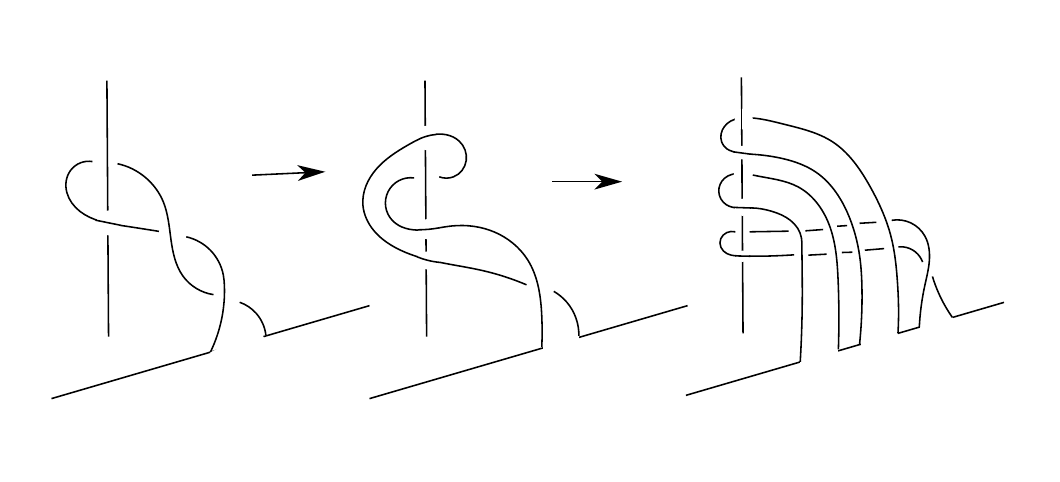}
\end{figure}
\end{proof}

\begin{lem}
If $\ell_1$ and $\ell_2$ are $W$-twisted looms that differ by one of the following substring modifications, then $\beta(\ell_1) = \beta(\ell_2)$. Assume $\sigma$ represents some symbol in $\{+,-,0\}$.
$$\sigma_{i,t}^hx_i \leftrightarrow x_i\sigma_{i+1,t}^h,\;\;\;\;\; \sigma_{i+1,t}^hx_i^{-1} \leftrightarrow x_i^{-1} \sigma_{i,t}^h$$
Furthermore, we have
$$\beta(...\sigma_{i+1,t}^hx_i...) = \beta(...x_i\sigma_{i,t}^{h+1}0_{i+1,t}^h +_{i+1,t}^{h+2}...)^{[h+1,\infty]+2} $$
$$\beta(...x_i\sigma_{i,t}^h...) = \beta(...\sigma_{i+1,t}^{h+1}0_{i,t}^{h+2}-_{i,t}^{h}x_i...)^{[h+1,\infty]+2} $$
$$\beta(...x_i^{-1}\sigma_{i+1,t}^h...) = \beta(... -_{i+1,t}^{h+2}0_{i+1,t}^{h}\sigma_{i,t}^{h+1}x_i^{-1} ...)^{[h+1,\infty]+2}$$
$$\beta(...\sigma_{i,t}^hx_i^{-1}...) = \beta(... x_i^{-1}+_{i,t}^h0_{i,t}^{h+2}\sigma_{i+1,t}^{h+1} ...)^{[h+1,\infty]+2}$$
Thus, if we have a sequence of thread symbols before an $x_i$ symbol, we can transform it into a sequence of thread symbols after that $x_i$ symbol, and vice-versa.
\end{lem}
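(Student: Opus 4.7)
The plan is to verify each identity by direct inspection of the corresponding knot diagrams, following the pattern established in Lemmas 4 and 5. For the first pair of identities $\sigma_{i,t}^h x_i \leftrightarrow x_i \sigma_{i+1,t}^h$ and $\sigma_{i+1,t}^h x_i^{-1} \leftrightarrow x_i^{-1} \sigma_{i,t}^h$, the observation is that the loop corresponding to a thread symbol reaches up over all strands beyond strand $j$, so when the crossing $x_i$ (resp.~$x_i^{-1}$) is oriented so that the loop's strand ends up on top after the crossing, the loop can be slid across the crossing without interacting with the other strand. This is exactly the case of the two identities with the simple left-right swap: they correspond to the loop hopping laterally from strand $i$ to strand $i+1$ (or vice versa) because that is what the crossing does to the strand it sits on, and no twist is introduced.

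For the four more complex identities, the situation is that the loop must pass through the crossing from the under-strand side to the over-strand side (or vice versa), which inevitably drags it around the other strand and produces a compensating half-twist. My plan is to draw explicitly the isotopy for the first case, $\beta(\ldots\sigma_{i+1,t}^h x_i\ldots) = \beta(\ldots x_i \sigma_{i,t}^{h+1} 0_{i+1,t}^h +_{i+1,t}^{h+2}\ldots)^{[h+1,\infty]+2}$: starting with the loop (of type $\sigma$) attached to strand $i+1$ sitting immediately before the $x_i$ crossing, one pulls the loop around to the other side of the crossing. Because strand $i+1$ passes under strand $i$ at this crossing, the loop must be dragged under strand $i$, and a direct check of the local picture shows that a single positive half-twist is acquired, placed on strand $i+1$ immediately after the crossing. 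The three resulting thread symbols $\sigma_{i,t}^{h+1} 0_{i+1,t}^h +_{i+1,t}^{h+2}$ are then precisely the encoding of the original $\sigma$ loop on strand $i$ followed by the extra positive half-twist, expressed in the three-thread-symbol form guaranteed by the isotopy depicted in the TwistIsotopy figure of Lemma 5. The height shift $[h+1,\infty]+2$ is the book-keeping required to make room for the two new heights $h+1, h+2$.

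The remaining three identities are obtained by the symmetries of the picture. Reflecting the diagram across a horizontal axis interchanges $x_i \leftrightarrow x_i^{-1}$ and swaps the role of strands $i$ and $i+1$, converting the first identity into the third. Reflecting across a vertical axis reverses the direction of traversal, swapping the before-crossing and after-crossing configurations and converting positive half-twists into negative ones, which takes the first identity into the second. Composing the two reflections yields the fourth. In each case the particular positions of the $+$, $-$, and $0$ thread symbols, as well as the height offsets, are dictated by which mirror image of the TwistIsotopy figure we are invoking.

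The main obstacle is purely bookkeeping: we must verify that in each of the four cases the compensating half-twist acquired by the loop genuinely has the sign claimed, and that the three-symbol encoding uses the correct strand subscript ($i$ vs.~$i+1$) and the correct order of heights. Once the geometric picture for one case is drawn carefully and the sign of the acquired twist is confirmed, the other three reduce to applying the evident dihedral symmetry of the local crossing diagram, so no further substantive geometric argument is needed.
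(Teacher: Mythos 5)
Your treatment of the two simple commutations matches the paper's (the loop rides the over-strand of the crossing, so it slides across freely), and your plan of proving the hard cases by explicit isotopy is also the paper's method. But the symmetry reduction to a single explicit case has a genuine gap. The only reflection compatible with the twisted-loom conventions is the left-right (vertical-axis) one, since the loops attached by thread symbols are required to pass \emph{over} all strands of higher position and link with the bands far above; an up-down reflection destroys this convention and is not a symmetry of the formalism. Moreover, the valid left-right mirror reverses the word, sends $x_i$ to $x_i^{-1}$, and flips the signs of twists, so it carries the first identity to the third and the second to the fourth --- exactly the reduction the paper makes ("the last two are their mirror images"). It cannot carry the first identity to the second, because the second still involves $x_i$, not $x_i^{-1}$; your claim that a vertical-axis reflection "swaps the before-crossing and after-crossing configurations" while keeping the crossing type is where the bookkeeping goes wrong. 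Consequently you are left needing a separate explicit isotopy for the second identity (pushing the loop backwards along the under-strand through $x_i$, which picks up the compensating $0_{i,t}^{h+2}\,{-}_{i,t}^{h}$ on the over strand \emph{before} the crossing); the paper indeed draws two distinct isotopies, not one.

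A second, smaller inaccuracy: in the first hard identity the compensating symbols $0_{i+1,t}^{h}$ and $+_{i+1,t}^{h+2}$ carry strand subscript $i+1$, i.e.\ they sit on the \emph{over} strand after the crossing, while the moved symbol $\sigma_{i,t}^{h+1}$ sits on strand $i$. So the right-hand side is not an instance of the same-strand three-symbol encoding of a doubly twisted loop from the previous lemma; the geometric mechanism is that dragging the loop along the under-strand through the crossing catches it on the over strand, and this is undone by inserting a height-interleaved pair of loops on the over strand (analogous to the correction terms in the commutation lemma), not by a twist on the loop's own strand. If you draw the isotopy carefully you will likely find the correct picture, but as written the justification both misidentifies the mechanism and relies on a symmetry that does not exist in this calculus.
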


\begin{proof}
It is easy to see that $\sigma_{i,t}^hx_i \leftrightarrow x_i\sigma_{i+1,t}^h$ and $\sigma_{i+1,t}^hx_i^{-1} \leftrightarrow x_i^{-1} \sigma_{i,t}^h$ do not change the banded unknot, since the thread symbol is just sliding over the upper strand of the crossing so it will not get tangled with the lower strand. For the remaining four equivalence moves of the lemma, it suffices to prove the first two because the last two are their mirror images. We demonstrate these moves through the following isotopies. 
 \begin{figure}[h]
\caption{The isotopies $\sigma_{i+1,t}^hx_i\leftrightarrow x_i\sigma_{i,t}^{h+1}0_{i+1,t}^h +_{i+1,t}^{h+2}$ and $x_i\sigma_{i,t}^h\leftrightarrow \sigma_{i+1,t}^{h+1}0_{i,t}^{h+2}-_{i,t}^{h}x_i$. }
\centering
\includegraphics[scale = 0.8]{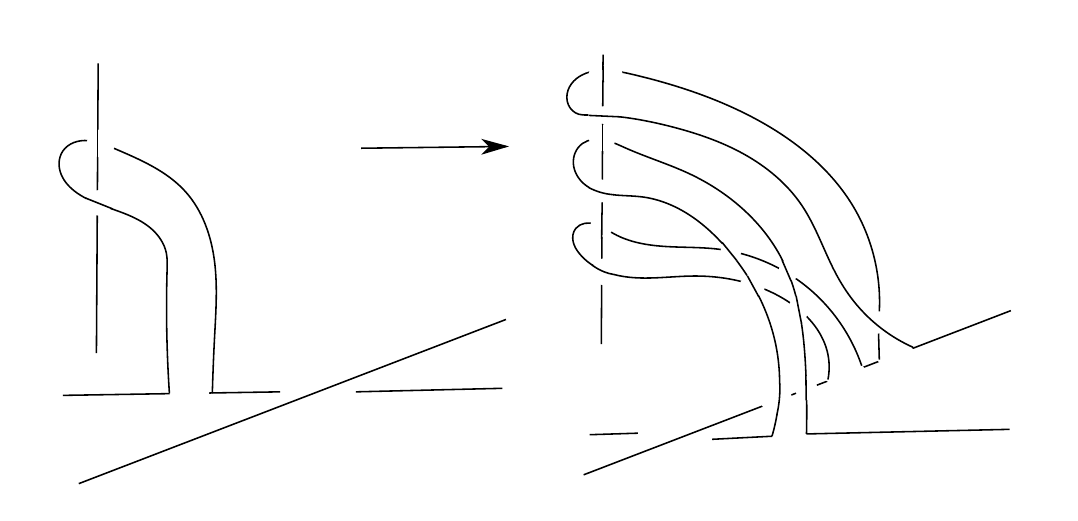}

\includegraphics[scale = 0.8]{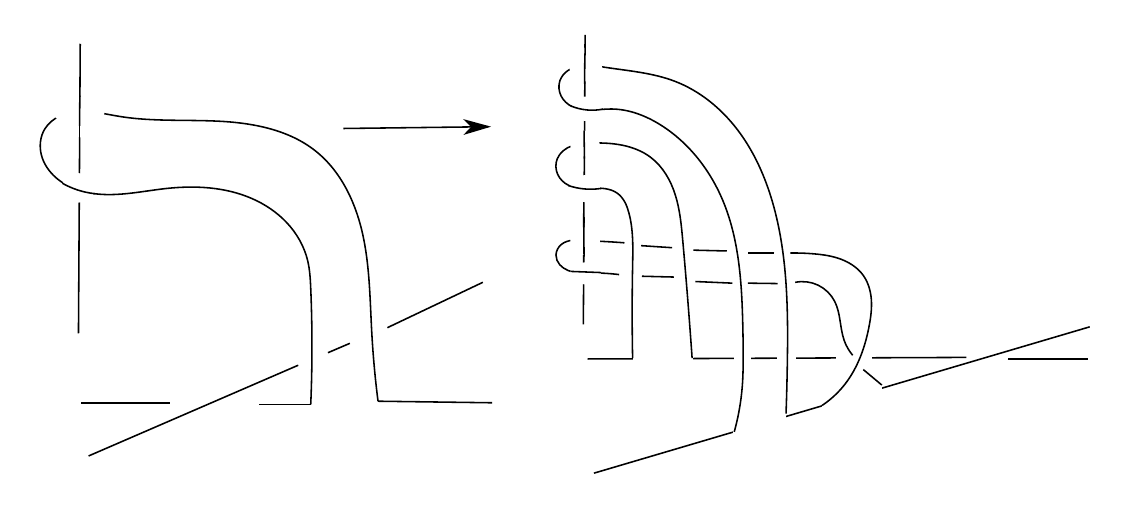}
\end{figure}
\end{proof}

We can now prove the following theorems.
\begin{thm}
The set of banded unknots representable by $W$-twisted looms is equal to the set of banded unknots representable by $E_n$-twisted looms, where $n$ is the rank of $W$. 
\end{thm}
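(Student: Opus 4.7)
The plan is to show that if $W$ and $W'$ are comb diagrams of rank $n$ differing by a single one of the substring modifications listed in the definition of comb diagrams, then $W$-twisted looms and $W'$-twisted looms represent exactly the same set of banded unknots. Once this is established, the theorem follows by iterating along the reducing sequence of substring modifications guaranteed by property (3) of the definition of a comb diagram, which takes $W$ to $E_n$.

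Fix such an elementary move, writing $W = A\alpha B$ and $W' = A\alpha' B$ where $\alpha\leftrightarrow\alpha'$ is one of the allowed substitutions. Given a $W$-twisted loom $\ell$, let $\gamma$ be the portion of $\ell$ whose comb-diagram letters form $\alpha$, so $\ell = L\gamma R$. The substring $\gamma$ consists of $\alpha$ with finitely many thread symbols interleaved among its letters. The first step of the proof is to push these intervening thread symbols out of $\gamma$ into $L$ or $R$, producing a $W$-twisted loom of the form $L'\alpha R'$ with $\beta(L'\alpha R') = \beta(\ell)$, in which no thread symbols appear inside $\alpha$. This is carried out one thread symbol at a time using the commutation results of Lemmas 4, 5, and 6: between them, these lemmas supply equalities that push a thread symbol past any adjacent comb-diagram letter while preserving the banded unknot, at the possible cost of adding more thread symbols on the far side and reindexing heights.

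Once $\alpha$ has been cleared of thread symbols, replacing $\alpha$ by $\alpha'$ yields a word $L'\alpha'R'$; since deleting all thread symbols now gives $W'$, this word is a $W'$-twisted loom. To see that $\beta(L'\alpha R') = \beta(L'\alpha' R')$, note that the clasps coming from the thread symbols in $L'$ and $R'$ are attached to the comb-diagram bands far from the comb diagram itself, so the isotopy class of the banded unknot in the region of $\alpha$ is determined purely by the comb-diagram letters there. Each allowed substring substitution (cap-cup cancellations, Reidemeister II and III, sliding a bar past a crossing, moving caps and cups past strands, bars and each other, and the commutation of distant letters) is a standard ambient isotopy of the underlying Morse presentation, so $\beta$ is unchanged. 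By symmetry, every $W'$-twisted loom can likewise be converted to a $W$-twisted loom with the same banded unknot, completing the inductive step.

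The hard part will be controlling the bookkeeping in the eviction phase: some cases of Lemmas 5 and 6 replace a single thread symbol with four, so one must be sure that iteratively evicting thread symbols from $\gamma$ actually terminates rather than proliferates indefinitely. The remedy is to assign each thread symbol currently inside $\alpha$ a potential equal to the number of comb-diagram letters of $\alpha$ between it and the nearer end of $\gamma$, and then always push the thread symbol with smallest potential. Inspection of the equations in Lemmas 5 and 6 shows that the extra thread symbols introduced by a single eviction all land on the outer side of the comb-diagram letter past which one is pushing, so they have potential zero and never re-enter $\alpha$. Hence each eviction strictly decreases the total potential, and the procedure terminates after finitely many steps.
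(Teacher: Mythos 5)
Your overall strategy is the same as the paper's (reduce to a single comb-diagram move, clear the thread symbols out of the affected subword using the commutation lemmas, then apply the move), but the eviction phase as you have written it rests on two claims that are false. First, the lemmas do \emph{not} let you ``push a thread symbol past any adjacent comb-diagram letter'': there is no move at all for sliding a thread symbol on strand $i$ past a bar $|_i$ on the same strand (Lemma~4 explicitly requires $i\neq j$), and Lemma~5 does not move a thread symbol past a cap or cup whose strands it sits on --- it only re-strands it between positions $i$ and $i+1$ while keeping it on the same side of the $\supset_i$ or $\subset_i$. So a symbol can be locally stuck in your scheme (e.g.\ a symbol on strand $i+1$ between $\subset_i$ and $\supset_{i+1}$ can be pushed past neither adjacent letter directly; it escapes only by first applying Lemma~5 to move it to strand $i+2$ and then commuting it past the cup), and your prescription ``push the symbol with smallest potential'' may point it at a letter it cannot cross; you give no case analysis showing a viable direction always exists. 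Second, the termination argument is broken: in the under-strand moves of Lemma~6 the extra symbols land immediately on the far side of the \emph{single crossing} being passed, not outside $\alpha$. For the braid relation $x_ix_{i+1}x_i\leftrightarrow x_{i+1}x_ix_{i+1}$, a symbol between the first two letters pushed past $x_{i+1}$ produces three symbols that are still inside $\alpha$ at positive potential, so with your potential (distance to the nearer end) the total can strictly increase ($1$ symbol at potential $2$ becomes $3$ symbols at potential $1$). Termination is salvageable --- the new symbols are strictly closer to the exit, so an exponentially weighted potential such as $\sum_s 4^{d(s)}$ does decrease --- but the argument as stated fails.

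The paper sidesteps both problems with one organizing idea: define the \emph{arcs} of the comb diagram (maximal over-crossing intervals of the knot) and observe that the four lemmas let you slide all thread symbols along their arcs so as to localize them at one chosen point per arc, chosen away from the letters of the move. Then the only comb-diagram moves that can trap an entire arc between their letters are $x_ix_i^{-1}\leftrightarrow\varnothing\leftrightarrow x_i^{-1}x_i$ and $x_ix_{i+1}x_i\leftrightarrow x_{i+1}x_ix_{i+1}$, and for those the Lemma~6 moves push everything out to the right through the $x_i$'s. If you want to keep your word-by-word eviction scheme, you need (i) a case analysis guaranteeing that every trapped symbol has at least one passable direction (possibly after a Lemma~5 re-stranding step), and (ii) a corrected termination measure of the kind above.
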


\begin{proof}
By definition, any rank $n$ comb diagram has a sequence of equivalence moves taking it to $E_n$. Therefore, it suffices to prove that if $W$ and $W'$ are comb diagrams that differ by one of the equivalence moves, and if  a banded unknot $B$ is representable by a $W$-twisted loom $\ell$, then $B$ is also representable by a $W'$-twisted loom $\ell'$. 

We claim that there will always be a sequence of transformations of $W$-twisted looms, from the previous four lemmas, moving all the thread symbols of $\ell$ out from in between the symbols of the equivalence move that we want to apply. This would allow us to apply the equivalence move to obtain the desired $W'$-twisted loom. With the large number of different equivalence moves, this may seem like a daunting task, but it is actually quite simple. 

First, we define an \emph{arc} of our comb diagram to be a maximal interval in the corresponding 1-manifold where all the crossings are over-crossings. Intuitively, the arcs are the connected lines in the drawing of the knot diagram, when we draw under-crossings as gaps. From the previous four lemmas, it is easy to see that if we choose a point in each arc of $W$, we can apply transformations to $\ell$ to localize the thread symbols around those points, without changing the corresponding banded unknot. Therefore, the only equivalence moves of comb diagrams that we need to concern ourselves with are $x_ix_{i+1}x_i \leftrightarrow x_{i+1}x_ix_{i+1}$ and $x_ix_i^{-1}\leftrightarrow \varnothing \leftrightarrow x_i^{-1}x_i$, because these are the only two moves that trap an entire arc of the comb diagram between their symbols. However, from Lemma 6, we see that we can simply move all thread symbols out to the right of these moves by commuting them with the $x_i$ symbols. Thus, we can apply any equivalence move of comb diagrams.
\end{proof}

\begin{thm}
Let $B$ be a banded unknot with oriented bands that are combinatorially parallel. There exists a comb diagram $W$ and a $W$-twisted loom $\ell$ such that $B = \beta(\ell)$. 
\end{thm}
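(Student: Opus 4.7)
The plan is to isotope the banded unknot $B$ into a normal form from which the comb diagram $W$ and twisted loom $\ell$ can be read off directly. Since the core of $B$ is an unknot, I would first isotope the core to a round circle lying in a horizontal plane. The combinatorially parallel hypothesis says that the two boundary arcs of each band on the core are combinatorially adjacent, which means each band can be reshaped into a thin vertical strip rising straight up out of the plane, with no self-linking and no entanglement with other bands near their feet. Once all bands stand up vertically, their feet will become the bar symbols $|_i$ of $W$, and their orientation (inherited from the orientation of the core) will automatically be compatible with the forward-oriented strand requirement in property $(2)$ of a comb diagram.

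Having placed $B$ in this position, I would scan horizontally to extract the words of $W$ and $\ell$ simultaneously. Generic Morse events on the core contribute $\subset_i$, $\supset_i$, $x_i^{\pm 1}$ symbols in the usual way, while the feet of the bands contribute $|_i$ symbols; this produces $W$. Any loops of the core that rise up and link one of the bars far above the plane are not part of $W$: each such loop, around the $t$th bar on strand $i$ at some relative height $h$, contributes a thread symbol $\sigma_{i,t}^h$ with $\sigma\in\{+,-,0\}$ chosen to match the sign of the twisting of that loop relative to the band's framing. By Lemmas 6--9, we can arrange the loops into a canonical order, and after all loops are extracted, what remains is exactly the configuration described by $W$; re-gluing the clasps along the thread symbols recovers $B$ up to isotopy, giving $\beta(\ell)=B$.

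The main obstacle is verifying property $(3)$ of the comb diagram definition, namely that $W$ reduces to $E_n$ via the prescribed list of substring moves. Properties $(1)$ and $(2)$ are automatic from the construction, since the core is a connected 1-manifold with the correct start and end and the bands attach to forward-oriented arcs. For $(3)$, since the underlying Morse word obtained by ignoring the bars represents the unknot, it admits a standard Morse-theoretic reduction to a single strand. The subtlety is that the comb diagram moves treat each bar as rigid, so I cannot freely push bars past cups, caps, or crossings without using the specific listed rules. I would overcome this by choosing the isotopy in the first step carefully: each band's foot is placed on an isolated short arc of the core, so that a small neighborhood of each bar can be slid to the left of the diagram using the commutation rules for $|_i$ with $x_j$, $\subset_j$, $\supset_j$, reducing the problem to trivializing the remaining bar-free Morse word for the unknot and then stacking up the bars as $|_1^n$. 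An induction on the number of non-bar symbols in $W$, with the innermost bar peeled off first at each stage, completes the verification.
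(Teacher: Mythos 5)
There is a genuine gap, and it sits exactly where the theorem's content lies. Your first step asserts that, because the bands are combinatorially parallel, they ``can be reshaped into thin vertical strips rising straight up out of the plane, with no self-linking and no entanglement,'' with all residual complexity appearing as loops of the core that each encircle a single bar in the standard clasp-loop fashion of a thread symbol. Combinatorial parallelism is only a condition on how the attaching arcs sit along the circle; it says nothing about how the bands are knotted with the core, with each other, or with themselves in space, so it does not let you straighten them for free. A banded unknot in which all entanglement has been concentrated into single-bar loops attached to designated strands \emph{is} (essentially) a twisted loom, so this step assumes the conclusion rather than proving it. (There is also an internal tension: you first put the core on a round planar circle, and later extract $\subset_i,\supset_i,x_i^{\pm1}$ from its ``generic Morse events,'' which presupposes a nontrivial diagram again; and nothing in your construction guarantees that each rising loop links exactly one bar, or links it in the specific over-all-strands, untwisted-or-once-twisted pattern that a thread symbol encodes.)

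The second problem is your verification of property $(3)$. Reducibility of the bar-free Morse word to a single strand only says the core is an unknot; property $(3)$ is a triviality statement about the \emph{banded} unknot underlying $W$, and the listed moves deliberately keep the bars rigid: a bar cannot pass under strands, bars do not commute with one another, and a bar slides through a crossing on its own strand only for one crossing sign ($|_{i+1}x_i\leftrightarrow x_i|_i$ and $|_ix_i^{-1}\leftrightarrow x_i^{-1}|_{i+1}$ are allowed, the other two are not). So ``slide each bar to the left by commutations and then trivialize the remaining word'' is not available in general; whether the bars can be disentangled is precisely the geometric condition you need to arrange, not a bookkeeping consequence. The paper secures it differently: it first produces a diagram of $B$ in which the only interaction between the core and the bands is strands passing \emph{underneath} the bands, chosen so that pushing those strands up through the bands yields an isotopically trivial banded unknot. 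Existence of such a diagram comes from a neighborhood argument (take an interval of the core containing one end of each band; the union of that interval with the bands is contractible, so everything outside a small neighborhood of it can be isotoped far away, leaving near the bands only the strands one wants to pass through them). Then $W$ is read off from the trivialized picture, which makes property $(3)$ automatic, and each former under-band crossing is recorded as a $0_{i,t}^h$ thread symbol. If you want to salvage your outline, you need an argument of this kind that simultaneously produces the normal form and certifies condition $(3)$; as written, both are asserted.
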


\begin{proof}
We claim that $B$ has a diagram as in figure \ref{bandunknot}, such that if we take all the strands that go underneath the bands and push them up through the bands, then we get an isotopically trivial banded unknot. If this is true, then we can find a twisted loom representing $B$ by switching the crossings under the bands to over-crossings, taking the comb diagram for that, and then adding $0_{i,t}^h$ thread symbols where the under crossings were, thereby obtaining a twisted loom that represents $B$. 
 \begin{figure}[h]
\caption{\label{bandunknot}A diagram for a banded unknot that can easily be interpreted as a twisted loom. }
\centering
\includegraphics[scale = 0.8]{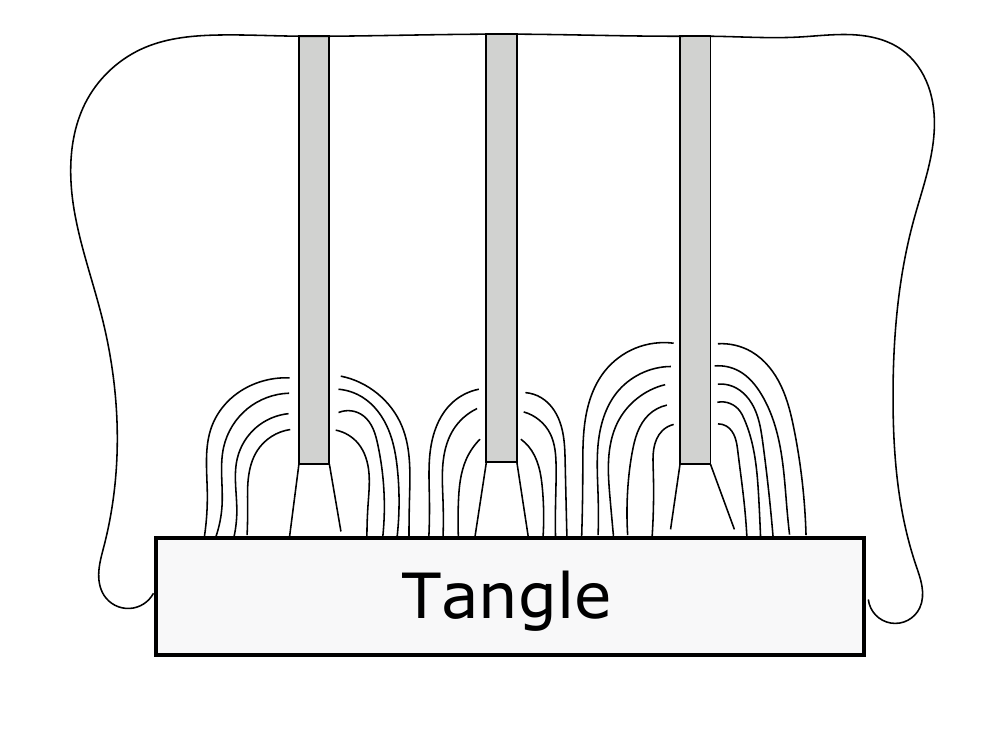}
\end{figure}

Now, we just need to prove that such a diagram exists for $B$. To do this, first take an interval on the unknot containing exactly one end of each band of $B$, and let $X$ denote the union of the interval with the bands. We have that $X$ is contractible, so for a small open neighborhood $N$ around $X$, we can isotope everything outside $N$ to be far away. We can also make it so that the strands we would need to pass through the bands to trivialize $B$ are close enough to those bands to be inside $N$, and without loss of generality, we may assume these strands go along the underside of the bands.  Thus, when we push everything outside $N$ far away, the only strands that remain close to the bands are those we wish to pass through them to trivialize $B$. This gives us a diagram in the desired form. 
\end{proof}

\begin{lem}
If a knot has unknotting number $n$, then it can be obtained from some clasp surgery on a parallel-banded unknot with $n$ bands.
\end{lem}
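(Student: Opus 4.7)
The plan is to realize each of the $n$ unknotting crossing changes of $K$ as a clasp surgery on a small signed band inserted into the unknot, and then to isotope the resulting banded unknot into combinatorially parallel position.

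First I would use $u(K) = n$ to fix a diagram of the unknot $U$ together with a set of $n$ of its crossings whose simultaneous change yields a diagram of $K$; such a configuration is obtained by reading an unknotting sequence for $K$ in reverse. At each of these $n$ distinguished crossings of $U$, I would insert a small clasp band: a narrow strip spanning the two strands at the crossing, equipped with the single half-twist whose sign is chosen so that clasp surgery on the band alone performs the required crossing change. After inserting all $n$ such bands, we obtain a banded unknot $B$ with $n$ signed bands whose simultaneous clasp surgery produces $K$.

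It then remains to isotope $B$ so that its bands are combinatorially parallel in the sense needed to apply the previous theorem. Because each band was inserted locally at a single crossing site, the bands can be taken to have arbitrarily small diameter, to be pairwise disjoint, and to interact with the rest of $U$ only at their feet. I would then isotope $U$ to a standard round planar circle (carrying the bands along), slide the feet of the bands along $U$ to place them in a chosen cyclic order, and isotope each band body into a small perpendicular strip standing above the disk bounded by $U$. This yields the desired parallel-banded unknot.

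The main obstacle will be verifying that the ambient isotopy taking $U$ to a round circle does not force the bands to become tangled with one another. The cleanest way to handle this, mimicking the technique used in the proof of the previous theorem, is to let $X$ be the union of a short arc on $U$ containing one foot from each band together with the cores of all the bands. Then $X$ is contractible, so a small open neighborhood $N$ of $X$ is isotopic to a standard handlebody-like region in which the bands already appear in parallel position; everything outside $N$ can be isotoped out to infinity without interfering with the band configuration, and the result is a banded unknot with $n$ combinatorially parallel bands whose clasp surgery recovers $K$.
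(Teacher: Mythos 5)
Your construction breaks down at the step where you ``slide the feet of the bands along $U$ to place them in a chosen cyclic order.'' The attaching intervals of the bands are disjoint arcs on the unknot, and any isotopy of the banded unknot preserves their cyclic order along the circle: feet may slide along $U$, but they can never pass one another. When you insert the clasp bands locally at the $n$ distinguished crossings, the cyclic order of the $2n$ feet is dictated by the diagram and is in general \emph{not} combinatorially parallel, and no later isotopy can repair this. For example, if $K$ is a connected sum of $n$ twist knots and the $n$ crossing changes are performed inside the separate summands, the feet occur in the sequential pattern $a_1 b_1 a_2 b_2 \cdots a_n b_n$, which for $n \geq 3$ admits no arc of $U$ containing exactly one foot of each band; such an arc is precisely what combinatorial parallelism provides and what the proof of Theorem 7 uses. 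For the same reason the contractible-neighborhood argument in your last paragraph is circular: the ``short arc containing one foot from each band'' needed to form $X$ does not exist until parallelism has already been arranged. The further claim that each band can be made ``a small perpendicular strip standing above the disk bounded by $U$'' is also too strong: once $U$ is round, the knotting of $K$ must be carried by the bands, which will in general be knotted and linked with $U$ and with one another; only the combinatorial attachment pattern can be standardized.

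The paper avoids this by exercising the freedom in the choice of each band at the moment of its creation, not after all bands are in place. It realizes the unknotting as a generic homotopy from the knot to the unknot with $n$ double points and a fixed basepoint; when the $i$-th double point occurs, the band recording that clasp is attached and its feet are immediately slid next to the basepoint --- at that moment the only constraint is to avoid the feet of the $i-1$ bands already sitting there, so this is always possible --- and all bands are then carried along the rest of the homotopy. Your argument can be repaired in the same spirit (insert the bands one at a time, routing each from a common basepoint along $U$ to its crossing so the feet are nested near the basepoint by construction), but the all-at-once insertion followed by post hoc sliding, as written, does not prove the lemma. A minor additional remark: your opening reduction to a single diagram of $U$ whose $n$ simultaneous crossing changes give $K$ is a standard fact, but its usual proof is the same track-the-site-through-the-isotopy argument, which the paper's homotopy formulation uses directly.
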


\begin{proof}
Let $\gamma: S^1\times[0,1]$ be a generic homotopy from our knot to the unknot that has $n$ self intersections at times $t_1,t_2,...,t_n$. Furthermore, assume this homotopy fixes a basepoint of $S^1$. Then, at each time $t_i$ we can attach a band that stays on the knot for the rest of the homotopy, encoding the clasp that we would need to add to undo the crossing change. We are free to move the endpoints of this band along the knot however we like, as long as they do not coincide with the endpoints of the bands we have already added. If, whenever a band appears, we immediately move its endpoints to be close to the basepoint, then the final result will be $n$ bands on the unknot that are combinatorially parallel, and if we do clasp surgery on these bands, we obtain the original knot. 
\end{proof}

We can finally prove Theorems $3$ and $4$.

\begin{proof}[Proof of Theorem 4.]
Given a knot $K$ of unknotting number $n$, Lemma 7 lets us represent it as a clasp surgery on a parallel banded unknot, Theorem 7 lets us represent that parallel banded unknot by a $W$-twisted loom, and Theorem 6 then lets us represent it by an $E_n$-twisted loom. Finally, we see that clasp surgery on a $E_n$-twisted loom gives us an ordinary loom with $n$ bar symbols, so we have proven that $K$ is representable by a loom with $n$ bar symbols. 
\end{proof}

\begin{proof}[Proof of Theorem 3.]
We have a natural transformation $k:LM^{\Box}\to K^\Delta$, and Theorem 4 gives us that $k_0: LM^{\Box}_0\to K^\Delta_0$ is surjective, so $(LM,k)$ is a diagram system for $K^\Delta$.
\end{proof}

\section{Virtual transverse knots and braided Gauss diagrams}

The $n$-strand virtual braid group $\text{VB}_n$ is the group generated by ``crossings'' $\sigma_1,...,\sigma_{n-1}$ and ``virtual crossings'' $v_1,...,v_{n-1}$ subject to the following relations. 
\begin{itemize}
\item[1)]  $v_i^2 = e$.
\item[2)] $s_it_{i+1}s_i = s_{i+1}t_is_{i+1}$ when we replace $(s,t)$ by $(\sigma, \sigma)$, $(v, \sigma)$, or $(v, v)$. (not $(\sigma,v)$)
\item[3)] $s_it_j = t_js_i$ where $s$ and $t$ can each be either $\sigma$ or $v$, and $|i-j|>1$.
\end{itemize}

We write $W_n$ to denote words of symbols from the set $\{\sigma_1,...,\sigma_{n-1},\sigma_1^{-1},...,\sigma_{n-1}^{-1},v_1,...,v_{n-1}\}$ and $W_n^{+}$ to denote words of symbols from the set $\{\sigma_1,...,\sigma_{n-1},v_1,...,v_{n-1}\}$. 

\begin{dfn}
A \emph{virtual transverse knot} is defined to be an equivalence class of 1-component virtual braid closures modulo positive stabilization. Two braids $\beta\in \text{VB}_n$ and $\beta'\in \text{VB}_{n+1}$ are related by positive stabilization if when $w\in W_n^1$ is a word representing $\beta$, the word $\sigma_nw\in W_{n+1}^1$ represents $\beta'$. Thus, virtual transverse knots are just virtual braids modulo conjugation and positive stabilization. 
\end{dfn}

Transverse knots can be considered ordinary braids modulo conjugation and positive stabilization, so every transverse knot gives us a virtual transverse knot. We make the following conjecture. \begin{cnj}No two transverse knots are the same virtual transverse knot.\end{cnj}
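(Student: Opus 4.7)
My plan is to attack Conjecture 2 in two stages: first, establish a coarse separation invariant distinguishing transverse knots of different self-linking number, then seek finer invariants capable of distinguishing transversely non-simple pairs.

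\emph{Stage 1: Self-linking descends.} Define $sl(\beta) = e(\beta) - n$ for $\beta \in \text{VB}_n$, where $e(\beta)$ is the signed $\sigma$-exponent sum of any word in $W_n$ representing $\beta$, ignoring the $v_i$ letters entirely. This is well-defined on $\text{VB}_n$ because every defining relation preserves signed $\sigma$-exponent: the relations $v_i^2 = e$ and $v_iv_{i+1}v_i = v_{i+1}v_iv_{i+1}$ involve no $\sigma$ at all, the classical braid relation is symmetric in $\sigma$'s, and the mixed relation $v_i\sigma_{i+1}v_i = v_{i+1}\sigma_iv_{i+1}$ has signed $\sigma$-exponent $1$ on both sides. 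As a homomorphism $\text{VB}_n \to \Z$, it is conjugation invariant; positive stabilization takes $(\beta, n)$ to $(\sigma_n\beta, n+1)$ and so preserves $e - n$. Hence $sl$ descends to an invariant of virtual transverse knots that agrees with the classical self-linking on ordinary transverse knots, immediately proving the conjecture for pairs of different self-linking number.

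\emph{Stage 2: Equal self-linking.} The heart of the conjecture is separating classical transverse knots sharing a self-linking number --- the transversely non-simple pairs. I would pursue two parallel routes. Route A: extend Morton's transverse HOMFLY bound to virtual braid closures via a Markov-compatible trace on $\text{VB}_n$ producing a virtual HOMFLY polynomial, and hope that the virtual bound agrees with Morton's classically, so that any transverse-HOMFLY-separated pair remains separated virtually. Route B: use the virtual transverse finite type invariants constructed in this paper; compute a basis of $U_n$ rationally for small $n$ (extending the author's finite-field computations through rank $5$), then test the universal invariant against the smallest known transversely non-simple pairs, such as Birman--Menasco's or Etnyre--Honda's.

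\emph{The main obstacle} is Stage 2. The author explicitly states that no known virtual transverse invariant separates even one pair of classical transverse knots, and the classical invariants that do detect transversely non-simple pairs --- the LOSS invariant in knot Floer homology and Plamenevskaya's transverse class $\psi$ in Khovanov homology --- are built from contact-geometric or diagrammatic input with no obvious virtual analog. A definitive resolution likely requires either a purely combinatorial repackaging of such a contact invariant that is compatible with virtual crossings, or the discovery of a genuinely new invariant whose nontriviality on classical transverse knots is forced by extra structure --- for instance, a filtration on $U_n$ analogous to the unknotting-number filtration $A_{n,m}$ on $U_n(K^\Delta)$ from Section 3 --- that survives virtualization.
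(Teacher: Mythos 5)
The statement you are addressing is a conjecture; the paper offers no proof of it and explicitly flags it as a difficult open problem (noting that the quandle-based argument for the injectivity of knots into virtual knots has no transverse analogue). Your proposal is likewise not a proof: Stage 1 is correct but easy --- the signed $\sigma$-exponent count $e(\beta)-n$ is indeed invariant under the virtual braid relations, conjugation, and positive stabilization, so the self-linking number descends to virtual transverse knots --- and one can similarly dispose of pairs with different topological type by passing to the underlying virtual knot and invoking the Goussarov--Polyak--Viro/Kuperberg injectivity of classical knots into virtual knots. But this only reduces the conjecture to its hard core, transversely non-simple pairs (same knot type, same $sl$), and Stage 2, which is the entire content of the conjecture, is left as a pair of hopes rather than arguments.

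Moreover, your Route B is ruled out by the paper itself: Proposition \ref{realtriv} shows that any two transverse knots with the same topological knot type and the same self-linking number represent the \emph{same} element of $U_n(VTK)$ for every $n$, so no amount of computing the finite type groups (rationally or otherwise) can separate a transversely non-simple pair; and Conjecture \ref{trivial} expresses the expectation that this vanishing persists even at the level of braided virtual knot type. Route A is pure speculation --- no Markov-compatible trace on $\text{VB}_n$ is constructed, and even classically Morton's HOMFLY bound constrains $sl$ rather than distinguishing transverse knots with equal $sl$, so it attacks the wrong quantity. In short, the only cases your plan actually handles are the cases that were never in doubt, and the mechanism you propose for the essential case is one the paper proves cannot work; a genuinely new invariant (e.g.\ a combinatorial repackaging of a contact-type invariant compatible with virtual crossings, as you suggest at the end) would be needed, and constructing one is precisely the open problem.
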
 This appears to be a  difficult problem. The standard proof that the map from knots to virtual knots is injective relies on the uniqueness of the fundamental quandle of a knot, which is a quite nontrivial fact of 3-manifold topology. There is no hope of adapting this proof to the transverse case, so something new is needed.  The set of virtual transverse knots will be written $VTK$, and the set of transverse knots will be written $TK$. 

There is also a relevant class of knots between virtual transverse knots and virtual knots, which we call \emph{braided virtual knots}. These are simply virtual braids modulo positive and negative stabilization. To get ordinary virtual knots, we would also need to mod out by virtual stabilizations, namely stabilizations where the added crossing is virtual. 

Let $S^1$ denote the unit complex numbers. 
\begin{dfn}
A braided Gauss diagram is defined to be a triple $(n,C,s)$ where $n$ is a positive integer, $C$ is a finite set of disjoint ordered pairs of elements of $S^1$ such that for any pair $(x,y)\in C$ we have $x^n=y^n$, and $s: C\to \{+1, -1\}$ is a function assigning a sign to each element of $C$. The elements of $C$ are called chords, and the number $n$ is called the braid index. $s$ is called the sign function.  A braided chord diagram is a braided Gauss diagram for which each chord has a positive sign. Chords are thought of as arrows going from the first term in the ordered pair to the second term. The boundary of a braided Gauss diagram is defined to be the set of points in $S^1$ which are not the endpoint of any chord. Sub-diagrams of a braided Gauss diagram are obtained by restricting to subsets of $C$. 
\end{dfn}

\begin{dfn}
Let $X = (n,C,s)$ and $X' = (n,C',s')$ be braided Gauss diagrams. We say $X$ and $X'$ are equivalent if there exists a homotopy $h: S^1\times[0,1]\to S^1$ so that
\begin{itemize}
\item[1)] $h(x,0) = x$ for all $x\in S^1$, and the map $x\mapsto h(x,t)$ is a homeomorphism for all $t\in [0,1]$.
\item[2)] For all $t\in [0,1]$ and all $(x,y)\in C$, we have $(h(x,t))^n = (h(y,t))^n$.
\item[3)] $C' = \{(h(x,1),h(y,1)): (x,y)\in C\}$, and $s'(h(x,1),h(y,1)) = s(x,y)$ for all $(x,y)\in C$. 
\end{itemize}
Let $BG$ denote the set of braided Gauss diagrams up to equivalence, and let $BG^+$ denote the set of braided chord diagrams up to equivalence. For both notations, a subscript of $n$ restricts the set to diagrams of braid index $n$. Both of these sets can be given the structure of a diagram category where arrows are subdiagram inclusions and rotational symmetries, and $F_{BG}$ takes a diagram to its set of chords. 

\end{dfn}

When we speak about braided Gauss diagrams, we will generally mean equivalence classes of braided Gauss diagrams. When we refer to braided Gauss diagrams with specified points on their boundaries, we consider them up to equivalences where the homotopy preserves the specified points in the same way it preserves the endpoints of the chords. 

It is sometimes convenient to think of braided Gauss diagrams as Gauss diagrams with a metric on their boundary such that, for each chord, the paths between the two endpoints have integer length. The metric we choose on $S^1$ for this to work is the uniform metric for which the total length of the circle is the braid index. 

There is a function $W_n^1\to BG_n$ and a function $W_n^{1+}\to BG_n^+$ for all $n$, where the 1-manifold for the virtual braid closure is mapped to the circle, and for each crossing, there is a chord from the over-crossing to the under-crossing with sign equal to the sign of the crossing.  These maps will be denoted $w\mapsto [w]$. The map $W_n\to VTK$ factors through the map $W_n^1\to BG_n$, so we may talk about braided Gauss diagrams as representing virtual transverse knots. In particular, virtual transverse knots are equivalent to braided Gauss diagrams modulo what we will call braided Reidemeister moves. The braided Reidemeister moves can be listed as follows.
\begin{itemize}
\item[1)] If $w\in W_n$ and $i$ is any index such that $\sigma_i\sigma_{i+1}\sigma_i w\in W_n^1$, then $[\sigma_i\sigma_{i+1}\sigma_i w]\leftrightarrow [\sigma_{i+1}\sigma_{i}\sigma_{i+1} w]$ is a type 1 braided Reidemeister move. 
\item[2)] If $w\in W_n^1$ and $i$ is an index, then $[w]\leftrightarrow [\sigma_i\sigma_i^{-1}w]$ and $[w]\leftrightarrow [\sigma_i^{-1}\sigma_iw]$ are type 2 braided Reidemeister moves. 
\item[3)] If $w\in W_n^1$, then $\sigma_nw\in W_{n+1}^1$, and $[w]\leftrightarrow [\sigma_nw]$ is a type 3 braided Reidemeister move.
\end{itemize}

Using these equivalence moves, we can find relations for a finite type theory of virtual transverse knots. We have a map $\pi: BG^\Box_0\to VTK$, so we are in a situation like we discussed at the end of section 2.1. We define $U_n(VTK)$ to be the abelian group generated by braided Gauss diagrams of at most $n$ chords, modulo the following relations, where a term is considered zero if it has more than $n$ chords. 

\begin{itemize}
\item[1)] For any $w \in W_n$, and any index $i$, such that $\sigma_i\sigma_{i+1}\sigma_i w\in W_n^1$, we have the relation $$ [\sigma_i\sigma_{i+1}v_iw]  + [v_i\sigma_{i+1}\sigma_iw] + [\sigma_{i}v_{i+1}\sigma_{i}w] + [\sigma_i\sigma_{i+1}\sigma_{i}w] $$  $$ -  [\sigma_{i+1}\sigma_{i}v_{i+1}w ] - [v_{i+1}\sigma_{i}\sigma_{i+1}w] - [\sigma_{i+1}v_{i}\sigma_{i+1}w] - [w\sigma_{i+1}\sigma_i\sigma_{i+1}w]    = 0$$ 
\item[2)]For any $w\in W_n^1$, and any index $i$, we have the relations $$ [w\sigma_i\sigma_i^{-1}] + [w\sigma_iv_i] + [wv_i\sigma_i^{-1}]  = 0 $$ and $$ [w\sigma_i^{-1}\sigma_i] + [w\sigma_i^{-1}v_i] + [wv_i\sigma_i]  = 0 $$
\item[3)] For any $w\in W_n^1$, we have the relation $$ [\sigma_nw] + [v_nw] - [w]  = 0 $$ where $\sigma_nw$ and $v_nw$ are regarded as elements of $W_{n+1}^1$. 
\end{itemize}

A braided Gauss diagram $x$ is then represented as an element of $U_n(VTK)$ by taking the sum of its subdiagrams of at most $n$ chords, $s(x)$. We can easily see that the relations we have defined are just $s(x)-s(y)$ when $x$ and $y$ differ by one of the braided Reidemeister moves. 

If we wish to actually compute the abelian groups $U_n(VTK)$, we run into an immediate problem. There are infinitely many braided Gauss diagrams with a given number of chords, as the braid index may be arbitrarily large. We will solve this problem by giving a presentation for $U_n(VTK)$, different from the one above, which is finite. 

\begin{dfn}
Let $CD$ be the set of ordinary chord diagrams with directed edges. We define a map $CD\to BG$ by taking a chord diagram with $n$ chords to the corresponding braided Gauss diagram of braid index $2n$ for which the distance between any two adjacent chord endpoints is exactly one, and all the chords have positive sign. We call braided Gauss diagrams \emph{unitary} when they can be obtained in this way.  Equivalently, we can consider the unitary braided Gauss diagrams to be those that can be represented in such a way that there is exactly one $2n$-th root of unity between each adjacent pair of chord endpoints, where $2n$ is the braid index and $n$ is the number of chords. 
\end{dfn}

Let $u: Ab(CD_{\leq n})\to U_n(VTK)$ be the homomorphism from the free abelian group on chord diagrams with at most $n$ chords to the finite type group of virtual transverse knots, given by mapping a chord diagram to its corresponding unitary braided Gauss diagram. 

\begin{thm}\label{surj}
The map $u$ is surjective. Thus, $U_n(VTK)$ is finitely generated. 
\end{thm}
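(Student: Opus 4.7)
The plan is a downward induction on the chord count $k$, exploiting that BGDs with more than $n$ chords vanish in $U_n(VTK)$. Assuming every BGD with more than $k$ chords equals a $\Z$-linear combination of unitary BGDs, I want to establish the same for an arbitrary BGD $G$ with exactly $k$ chords; finite generation of $U_n(VTK)$ then follows at once since $CD_{\leq n}$ is finite.

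The key observation is that each of the three listed relations has the form $s(x) - s(y) = 0$ for BGDs $x, y$ connected by a braided Reidemeister move, so modulo BGDs with more than $k$ chords (by induction combinations of unitary BGDs), any two BGDs of chord count $k$ joined by such a move are equal in $U_n(VTK)$. For example, rearranging relation 3 gives $[w] - [v_n w] = [\sigma_n w]$ with the right-hand side at chord count $k + 1$ and hence a combination of unitary BGDs by induction, so $[w] \equiv [v_n w]$ modulo unitary BGDs: virtual stabilization becomes an equivalence in the relevant quotient. Relation 2 enables insertion or deletion of $\sigma_i \sigma_i^{-1}$ pairs up to higher-chord terms, and relation 1 encodes the braid group relation $\sigma_i \sigma_{i+1} \sigma_i = \sigma_{i+1} \sigma_i \sigma_{i+1}$. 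Cyclic rotation of the braid word, valid at the BGD level since the braid closure is preserved, lets all these moves be applied at arbitrary cyclic positions in the word.

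Using these tools, I would transform $G$ into a unitary BGD of $k$ chords as follows. Represent $G$ by a braid word $w \in W_m^1$ with $k$ real crossings. First, apply iterated virtual stabilizations to raise the braid index to $2k$. Second, apply braid group moves together with controlled insertions of $\sigma_i \sigma_i^{-1}$ pairs (whose extra chords are absorbed by induction) to redistribute the $k$ real crossings so that between each adjacent pair of chord endpoints on $S^1$ lies exactly one of the $2k$ equal arcs determined by the braid index, placing the endpoints at the $2k$-th roots of unity. Finally, normalize any remaining negative chord signs using the identity $[w \sigma_i v_i] \equiv -[w v_i \sigma_i^{-1}]$ that arises from relation 2 at the appropriate chord count. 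The resulting BGD is unitary, so $G$ is a $\Z$-linear combination of unitary BGDs, completing the induction.

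The main obstacle is the redistribution step: showing that, using only the braid-word manipulations that are equivalences modulo higher-chord terms, every 1-component braid word with $k$ real crossings on at least $2k$ strands can be brought to a form whose associated BGD is unitary. This is essentially a Markov-type normal form theorem for virtual braid words at fixed real-crossing count, and I expect it will require a systematic case analysis of how stabilizations, braid group relations, and $\sigma_i \sigma_i^{-1}$ insertions interact in order to realize the precise combinatorial condition of one chord endpoint per track on $S^1$.
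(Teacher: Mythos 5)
Your overall strategy---downward induction on chord count, rewriting a $k$-chord braided Gauss diagram modulo terms with more chords, which the inductive hypothesis turns into combinations of unitary diagrams---is a sensible framework, and two of your moves really are available in that form: relation 3 gives $[w]\equiv[v_nw]$ modulo the single term $[\sigma_nw]$, which has one extra chord, and relation 2 gives $[w\sigma_iv_i]\equiv-[wv_i\sigma_i^{-1}]$ modulo $[w\sigma_i\sigma_i^{-1}]$, again with one extra chord. But the other two tools you invoke point in the wrong direction. In relation 1 the two braid-move terms $[\sigma_i\sigma_{i+1}\sigma_iw]$ and $[\sigma_{i+1}\sigma_i\sigma_{i+1}w]$ are the terms of \emph{maximal} chord count; their difference equals a six-term combination of diagrams with one chord \emph{fewer}, so the braid relation is only an equivalence modulo lower-order terms, which a downward induction does not control. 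Similarly, relation 2 does not permit ``insertion or deletion of $\sigma_i\sigma_i^{-1}$ pairs up to higher-chord terms'': the generator $[w]$ does not occur in that relation at all, and $[w\sigma_i\sigma_i^{-1}]$ is expressed there through diagrams with one chord fewer. There is also a smaller unaddressed point: the stabilization relation is stated for words and inserts or removes a unit of length only at the top of the braid, so to adjust an arbitrary gap of the diagram you must first argue (via conjugation and detour moves) that the diagram admits a word representative placing that gap at the top.

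More importantly, the redistribution step, which you correctly flag as the main obstacle, is exactly the substance of the theorem, and you leave it unproven while planning to attack it with the (partly unavailable) moves above. The paper does not prove a Markov-type normal form at fixed chord count; instead it first removes negative chords by the isomorphism $\phi:U_n^+(VTK)\to U_n(VTK)$, whose inverse replaces $\sigma_i^{-1}$ by the alternating sum $\sum_{k}(-1)^k(v_i\sigma_i)^kv_i$, and then proves the quantitative reduction Lemma \ref{reduce}: using only the stabilization relations (precisely the move that \emph{is} valid in your sense), an interval carrying $i$ extra units of length satisfies $X_{0,i}=\sum_j f(i,j)X_{j,j+1}$, where the $X_{j,j+1}$ are locally unitary and $f$ is an explicitly defined Catalan-type function. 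Applied interval by interval, this expresses every positive diagram through unitary ones, giving surjectivity; the explicit coefficients are then reused to build the isomorphisms $U_n(CD)\cong U_n(NCD)\cong U_n^+(VTK)$, which is what later makes $\ker(u)$ finitely presentable. So your sketch could in principle be repaired by discarding the braid-move and pair-insertion steps and carrying out a careful bookkeeping of stabilizations alone, but as written it both relies on moves that fail in the needed direction and defers the essential argument.
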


Later in the section, we will prove this theorem, and give an explicit finite set of generators for $\ker(u)$. For now, though, we will discuss some numerical results. We wrote a computer program which uses the presentation with unitary braided Gauss diagrams to compute the vector spaces $U_n(VTK)\otimes(\Z/p\Z)$ for small $n$ and any prime $p$. 

\begin{center}
\begin{tabular}{ | m{1cm} | m{2.3cm}| m{2.3cm} | m{2.3cm} | m{2.3cm} |} 
  \hline
 $p$ & $\dim((\Z/p\Z)\otimes U_2(VTK))$ & $\dim((\Z/p\Z)\otimes U_3(VTK))$ & $\dim((\Z/p\Z)\otimes U_4(VTK))$ & $\dim((\Z/p\Z)\otimes U_5(VTK))$ \\ 
  \hline \hline
2 & 3 & 9 & 31 & 117 \\ 
  \hline
3 & 3 & 8 & 27 & 106\\
 \hline
5 & 3 & 8 & 27 & 104\\
 \hline
7 & 3 & 8 & 27 & 104\\
 \hline
\end{tabular}
\end{center}

\begin{exm}
The simplest example of two virtual transverse knots which are the same virtual knot are the unknots $[\sigma_1v_2]$ and $[v_1\sigma_2]$. These are distinguished in $(\Z/2\Z)\otimes U_2(VTK)$.
\end{exm}

We were not able to find a pair of virtual transverse knots of the same braided virtual knot type and self linking number that are distinguished by these invariants. Therefore, we make the following conjecture. 

\begin{cnj}\label{trivial}
If $x$ and $y$ are virtual transverse knots with the same braided virtual knot type and self-linking number, then they represent the same element of $U_n(VTK)$ for all $n$. 
\end{cnj}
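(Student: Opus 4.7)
The plan is to show that if $x$ and $y$ are virtual transverse knots with the same braided virtual knot type and the same self-linking number, then $[x] = [y]$ in $U_n(VTK)$ is already a consequence of the listed relations $(1)$--$(3)$. Since $x$ and $y$ agree as braided virtual knots, they are connected by a finite sequence of virtual braid relations, conjugations, and positive/negative stabilizations and destabilizations; equal self-linking numbers force the negative stabilizations and destabilizations in the sequence to appear in balanced pairs. Virtual braid relations and conjugations are already absorbed into the definition of a braided Gauss diagram equivalence class, positive stabilization is exactly relation $(3)$, and Reidemeister 2 is covered by relation $(2)$, so the entire problem reduces to showing that a single balanced pair of negative stabilization and destabilization, with arbitrary intervening virtual braid moves, leaves $[x]$ unchanged in $U_n(VTK)$.

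The main obstacle is the absence of an analog of relation $(3)$ for negative stabilization. I would attempt to derive an identity in $U_n(VTK)$ of the form $[\sigma_n^{-1} w] = [w] + \Phi(w)$ for $w \in W_n^1$, where $\Phi(w)$ is expressible as a sum of diagrams of strictly fewer real crossings. A natural strategy is to insert a cancelling pair $\sigma_n \sigma_n^{-1}$ via relation $(2)$ and then apply relation $(3)$ to absorb the outer $\sigma_n$, producing a rewriting of $[\sigma_n^{-1} w]$ that can be analyzed inductively. This must be done with care, since $\sigma_n^{-1}$ is not itself a stabilization letter, and the resulting expression needs to land in a form directly comparable to $[w]$ modulo terms of lower order in the chord filtration.

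Given such a negative-stabilization identity, one would verify that the two correction terms $\Phi$ from the stabilization/destabilization pair cancel after passing through the intervening braid moves. An induction on the number of negative stabilizations in the connecting sequence, combined with Theorem~\ref{surj} to restrict attention to a finite generating set of unitary diagrams, should then complete the proof. The hardest step is constructing $\Phi$ and showing that its image transforms equivariantly under the intermediate virtual braid rewriting. A worthwhile preliminary experiment is to compute $\Phi$ explicitly for small $n$ and check it is consistent with the dimensions over $\mathbb{Z}/p\mathbb{Z}$ recorded in the text; the apparent stabilization of those dimensions for $p\geq 5$ suggests that the relations governing negative stabilization should be derivable over $\mathbb{Z}$ via a universal formula, rather than only in characteristic zero.
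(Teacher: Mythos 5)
There is a genuine gap here, and it starts before any of your technical steps: the statement you are proving is stated in the paper as a \emph{conjecture}, not a theorem. The paper offers no proof of it; its only evidence is the computer calculation of $U_n(VTK)\otimes(\Z/p\Z)$ up to $n=5$, and the only case actually proved is Proposition~\ref{realtriv}, which concerns honest transverse knots. That proof relies on a fact special to the classical setting: two transverse knots with the same topological knot type and self-linking number become \emph{equal} after sufficiently many negative stabilizations, and one can then use the finite type relations to rewrite each class as a combination of negative stabilizations. The paper explicitly conjectures that this mechanism breaks down virtually --- it conjectures that negative stabilization is \emph{not} a unique operation on virtual transverse knots --- and calls the apparent triviality of the finite type theory ``mysterious'' precisely because the classical argument is not expected to transfer. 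Your plan is essentially an attempt to transfer it, so the paper's own Conjecture~4 is evidence against the step you most need.

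Concretely, the unproved steps in your proposal are exactly the hard content. First, ``equal self-linking numbers force the negative stabilizations and destabilizations to appear in balanced pairs'' only gives equality of counts; to cancel them you need Markov-type control over where in the sequence they occur, and no such structure theorem for virtual transverse knots is known (its absence is the whole difficulty). Second, the identity $[\sigma_n^{-1}w]=[w]+\Phi(w)$ is not derived: relations (2) and (3) let you eliminate negative crossings (as in the map $\psi$ in the proof that $\phi$ is an isomorphism) and absorb a \emph{positive} stabilization, but they do not produce a relation comparing $[\sigma_n^{-1}w]$ with $[w]$, and if they did, the conjecture would follow too easily to be mysterious. Third, the claim that the correction terms $\Phi$ ``transform equivariantly'' under the intervening braid moves and cancel is asserted, not argued, and it is precisely the kind of well-definedness statement that the non-uniqueness conjecture puts in doubt. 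So the proposal is a research program with its central lemma missing, not a proof, and it should not be presented as settling a statement the paper leaves open.
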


This conjecture makes virtual transverse knots an interesting case study for the purposes of finite type theory, as they give us an example of a kind of structure that finite type invariants seem incapable of seeing. There is a long standing and very difficult question of whether universal finite type invariant of knots is a complete invariant. In order to approach this problem, we need to further develop an understanding about exactly what type of things finite type theories can see, and what type of things they cannot see. We posit that studying the finite type theory of virtual transverse knots may be helpful in understanding such questions. 

The extent to which we understand Conjecture \ref{trivial} is the following easy fact. 

\begin{prp}\label{realtriv}
If $x$ and $y$ are transverse knots with the same knot type and self-linking number, then they represent the same element of $U_n(VTK)$ for all $n$.
\end{prp}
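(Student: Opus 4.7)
The plan is to apply Markov's theorem for classical braids and check that each move in the connecting sequence preserves the element $s([w])\in U_n(VTK)$. First, I would represent $x$ and $y$ by classical braid words $w_x$ and $w_y$ involving only $\sigma_i^{\pm 1}$ (no virtual letters), whose closures realize the common ordinary knot type and whose self-linking numbers $\mathrm{writhe}(w)-\mathrm{bi}(w)$ agree. By the classical Markov theorem, $w_x$ and $w_y$ are connected by a finite sequence of moves drawn from: braid relations (including far commutations $\sigma_i\sigma_j=\sigma_j\sigma_i$ for $|i-j|>1$), conjugation, $\sigma_i\sigma_i^{-1}\leftrightarrow\varnothing$ insertions/deletions, positive stabilization $w\leftrightarrow w\sigma_n$, and negative stabilization $w\leftrightarrow w\sigma_n^{-1}$.

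The braided Reidemeister moves of types 1, 2, 3 built into the defining relations of $U_n(VTK)$ exactly correspond to braid relations, $\sigma_i\sigma_i^{-1}$ moves, and positive stabilization, while far commutations and conjugation are absorbed by the diagram-category structure on $BG$ (far commutations do not change the braided Gauss diagram, and conjugation of braid words corresponds to rotational symmetries already identified among braided Gauss diagrams). Hence each of these moves preserves $s([w])\in U_n(VTK)$. The only remaining concern is negative stabilization: since $sl(x)=sl(y)$, in any connecting sequence the number of negative stabilizations equals the number of negative destabilizations, so they occur in balanced pairs.

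The crux of the proof is to show that each balanced pair of a negative stabilization and its matching negative destabilization can be removed from the sequence without changing $s([w])$. My plan is to commute each negative stabilization $u\to u\sigma_k^{-1}$ rightward through the intervening moves, using braid relations, commutations, and conjugations (all of which are braided Reidemeister moves or are already absorbed by $BG$) to bring the introduced $\sigma_k^{-1}$ crossing adjacent to the matching destabilization; the pair then cancels as a literal composition of inverse operations. When the introduced crossing conflicts with a later braid relation, the conflict can be resolved by re-writing the subsequent moves using braid relations and $\sigma_i\sigma_i^{-1}$ insertions, which are themselves braided Reidemeister moves and preserve $s([w])$.

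The main obstacle is the combinatorial bookkeeping in this rearrangement: the introduced $\sigma_k^{-1}$ may need to be commuted past many other crossings and its index adjusted under subsequent stabilizations, and the rewrites of the intervening moves must be carefully tracked. I expect to carry this out by induction on the number of balanced negative-stabilization pairs, with an inner induction on the number of intervening moves between a pair, using the braid relations of $U_n(VTK)$ and the Reidemeister type 2 relation to perform each step. Once all balanced pairs are eliminated, the modified connecting sequence uses only moves that preserve $s([w])$, and we conclude $s(x)=s(y)$ in $U_n(VTK)$ for all $n$.
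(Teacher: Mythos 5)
Your reduction to the classical Markov theorem runs into a fatal problem at exactly the step you identify as the ``crux.'' All of the moves you allow yourself in the rewritten sequence --- braid relations, far commutations, conjugation, $\sigma_i\sigma_i^{-1}$ insertions/deletions, and positive stabilization --- are precisely the transverse Markov moves: by the transverse Markov theorem they generate transverse isotopy of braid closures. So if you could really eliminate every balanced pair of a negative stabilization and a matching negative destabilization and be left with a sequence of only these moves, you would have proved not just $s(x)=s(y)$ in $U_n(VTK)$ but that $x$ and $y$ are transversely isotopic. That statement --- knot type plus self-linking number determines the transverse knot --- is false in general (transversely non-simple knot types exist, e.g.\ by Etnyre--Honda and Birman--Menasco). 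Equivalently: negative stabilization genuinely changes transverse type, and the fact that the \emph{number} of negative stabilizations equals the number of negative destabilizations in a Markov sequence gives no way to pair them off and commute one to the other; the intervening moves can involve the newly created strand in an essential way, and this is an obstruction of substance, not ``combinatorial bookkeeping.'' Any correct argument must exploit that you are working with linear combinations in $U_n(VTK)$ rather than with honest move sequences, since the conclusion you want is strictly weaker than transverse isotopy.

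That is what the paper's proof does, and it also uses a different topological input. Instead of the Markov theorem it invokes the stable equivalence theorem: two transverse knots with the same knot type and the same self-linking number become transversely isotopic after sufficiently many negative stabilizations. The algebraic half of the argument is that the defining relations of $U_n(VTK)$ (the type 2 and type 3 relations involving $v_i$) allow one to rewrite the class of any braided Gauss diagram as a $\Z$-linear combination of classes of its negative stabilizations; iterating, both $s(x)$ and $s(y)$ are rewritten as linear combinations of classes of heavily negatively stabilized representatives, which coincide term by term by the stable equivalence theorem, giving $s(x)=s(y)$. If you want to salvage your approach, you would need to replace the ``cancel balanced pairs'' step by an argument of this kind, in which a negative stabilization is not undone but rather traded, inside $U_n(VTK)$, for a linear combination of diagrams on which the comparison can be made.
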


\begin{proof}
$x$ and $y$ will become the same transverse knot if we negatively stabilize them enough times. However, it is possible to use the finite type relations to rewrite an element of $U_n(VTK)$ in terms of a linear combination of various negative stabilizations. Repeatedly doing this will allow us to rewrite both $x$ and $y$ as a linear combination of transverse knots that we know to be the same.
\end{proof}

However, we also believe the following conjecture.

\begin{cnj}
Negative stabilization is \emph{not} a unique operation on virtual transverse knots. That is to say, there are two ways to negatively stabilize some virtual transverse knot such that the two results are no longer the same virtual transverse knot.
\end{cnj}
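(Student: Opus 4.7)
The plan is to exhibit two virtual braid words $w_1, w_2 \in W_n^1$ representing the same virtual transverse knot, whose putative negative stabilizations $\sigma_n^{-1}w_1$ and $\sigma_n^{-1}w_2$ in $W_{n+1}^1$ represent distinct virtual transverse knots. The underlying structural reason such examples should exist is the asymmetry in the virtual braid relations: $s_it_{i+1}s_i = s_{i+1}t_is_{i+1}$ holds for $(s,t) \in \{(\sigma,\sigma),(v,\sigma),(v,v)\}$ but fails for $(\sigma,v)$. Because a real crossing cannot be slid past a virtual crossing in one direction, attaching a new negative real crossing on the right should ``see'' different local structure depending on which braid representative of the original virtual transverse knot is chosen, despite the two representatives being related through conjugation and positive stabilization.

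First, I would search among small examples. The natural candidates come from two sources: cyclic conjugates of a single word, such as $w_1 = \sigma_{n-1}v_{n-1}u$ and $w_2 = v_{n-1}u\sigma_{n-1}$ for various short tails $u$, and pairs $w \in W_n^1$ versus $\sigma_n w \in W_{n+1}^1$ differing by a positive stabilization, with $\sigma_n^{-1}w$ and $\sigma_{n+1}^{-1}\sigma_nw$ compared after re-stabilizing both to a common braid index. For each candidate pair, one would attempt to reduce one stabilization to the other using the virtual braid relations and the Markov-type moves available to virtual transverse knots; the forbidden-move asymmetry provides the structural obstruction that makes failure plausible.

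Second, the candidates must actually be distinguished by an invariant. The most tractable route is the finite type theory developed in this section: apply the computer program that computes $U_n(VTK) \otimes \Z/p\Z$ to the unitary braided Gauss diagram images of the two candidates, and verify that the images of $s(\sigma_n^{-1}w_1)$ and $s(\sigma_n^{-1}w_2)$ differ in some $U_n(VTK)\otimes \Z/p\Z$. This would succeed only if Conjecture \ref{trivial} fails at that level, since by construction the two stabilizations share the same braided virtual knot type and the same self-linking number.

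The main obstacle is precisely this tension with Conjecture \ref{trivial}. If Conjecture \ref{trivial} holds, then finite type invariants by definition cannot distinguish the two stabilizations, and no amount of computation inside the framework developed here will resolve the conjecture. In that case, a new invariant is needed, presumably coming from outside the finite type paradigm, such as a representation-theoretic invariant of the virtual braid group, a categorification, or an analogue of a transverse invariant from contact topology adapted to the virtual setting. The two conjectures are thus linked: a computational refutation of Conjecture \ref{trivial} would immediately suggest tools for the present conjecture, whereas a proof of Conjecture \ref{trivial} would force a fundamentally different attack. I expect the real difficulty to lie in navigating this interplay rather than in the combinatorics of choosing $w_1$ and $w_2$.
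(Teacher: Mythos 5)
The statement you are trying to prove is a conjecture, and the paper offers no proof of it; so the only question is whether your plan would actually establish it, and as written it would not. You never produce a concrete pair $w_1,w_2$, and, more importantly, your verification step is structurally self-defeating. As you yourself observe, the two negative stabilizations $\sigma_n^{-1}w_1$ and $\sigma_n^{-1}w_2$ automatically share the same braided virtual knot type (negative stabilization is one of the defining moves for braided virtual knots) and the same self-linking number, so they lie exactly in the class of pairs that Conjecture \ref{trivial} predicts the groups $U_n(VTK)$ cannot separate. The paper's computations through rank $5$ support that prediction --- the authors report being unable to find any pair with the same braided virtual knot type and self-linking number distinguished by these invariants --- so the proposed computer check should be expected to return equality, which proves nothing in either direction: failure of $U_n(VTK)\otimes(\Z/p\Z)$ to distinguish the two stabilizations neither shows they are the same virtual transverse knot nor that they differ. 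The plan therefore collapses to ``find an invariant outside the finite type framework,'' which is precisely the open problem; no such invariant is constructed, and the heuristic appeal to the forbidden $(\sigma,v)$ relation is never converted into an actual obstruction.

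To close the gap you would need one of two missing ingredients: either a refutation of Conjecture \ref{trivial} at some rank (which would make your search-and-compute strategy viable, but would contradict what the paper's evidence suggests), or a genuinely new invariant of virtual transverse knots --- not factoring through the finite type theory of this paper --- together with an explicit pair of braid representatives of a single virtual transverse knot whose negative stabilizations that invariant separates. Since neither is supplied, the proposal is a reasonable research outline but not a proof, and it does not advance beyond the paper, which likewise leaves the conjecture open.
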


Thus, we expect the argument in the proof of Proposition \ref{realtriv} to not apply to virtual transverse knots in general. This makes the apparent triviality of the finite type theory somewhat mysterious.

We will now work towards proving the surjectivity of $u$ and showing how to find a finite generating set for its kernel. This will comprise the remainder of this section.

\begin{dfn}
Let $U_n^+(VTK)$ be the quotient of $Ab(BG^+_{\leq n})$, the free abelian group on braided chord diagrams with at most $n$ chords, by the following relations.
\begin{itemize}
\item[1)] For any $w \in W_n^+$, and any index $i$, such that $\sigma_i\sigma_{i+1}\sigma_i w\in W_n^{1+}$, we have the relation $$ [\sigma_i\sigma_{i+1}v_iw]  + [v_i\sigma_{i+1}\sigma_iw] + [\sigma_{i}v_{i+1}\sigma_{i}w] + [\sigma_i\sigma_{i+1}\sigma_{i}w] $$  $$ -  [\sigma_{i+1}\sigma_{i}v_{i+1}w ] - [v_{i+1}\sigma_{i}\sigma_{i+1}w] - [\sigma_{i+1}v_{i}\sigma_{i+1}w] - [w\sigma_{i+1}\sigma_i\sigma_{i+1}w]    = 0$$ 
\item[2)] For any $w\in W_n^{1+}$, we have the relation $$ [\sigma_nw] + [v_nw] - [w]  = 0 $$ where $\sigma_nw$ and $v_nw$ are regarded as elements of $W_{n+1}^{1+}$. 
\end{itemize}
\end{dfn}

There is a homomorphism $\phi: U_n^+(VTK)\to U_n(VTK)$ given by the inclusion map from braided chord diagrams into braided gauss diagrams. Every relation of $U_n^+(VTK)$ is also a relation of $U_n(VTK)$, so this map is a well-defined homomorphism. 

\begin{lem}
The homomorphism $\phi$ is an isomorphism of abelian groups. 
\end{lem}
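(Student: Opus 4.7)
The plan is to construct an explicit left inverse $\psi\colon U_n(VTK)\to U_n^+(VTK)$ of $\phi$ by reading each R2 relation as a rewriting rule that expresses a diagram with a negative chord as a linear combination of diagrams with fewer negatives.

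For surjectivity, let $D\in BG_{\leq n}$ contain at least one negative chord. Since cyclic rotation of a braid word preserves its braided Gauss diagram, write $D=[u'\sigma_j^{-1}]$ with the chosen negative chord being the trailing $\sigma_j^{-1}$. Using $v_j^2=e$ in $\text{VB}_n$, we may further write $D=[u'v_jv_j\sigma_j^{-1}]$; the word $w:=u'v_j$ has the same underlying permutation as $u'\sigma_j^{-1}\in W_n^1$ and hence also lies in $W_n^1$. Applying the R2 relation
\[
[w\sigma_j\sigma_j^{-1}] + [w\sigma_jv_j] + [wv_j\sigma_j^{-1}] = 0
\]
and rearranging gives
\[
D\;=\;-[u'v_j\sigma_jv_j]\;-\;[u'v_j\sigma_j\sigma_j^{-1}].
\]
The first summand has one fewer negative chord than $D$, while the second has the same number of negatives but one additional positive chord. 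Iterating the procedure on the second summand, its total chord count eventually exceeds $n$ and the term vanishes by the zero convention. A double induction (outer on number of negatives, inner on total chord count) writes every $D$ as a combination of positive braided chord diagrams, so $\phi$ is surjective.

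For injectivity, define $\psi\colon Ab(BG_{\leq n})\to U_n^+(VTK)$ by iteratively applying the above elimination in a fixed order and then interpreting the resulting positive combination in $U_n^+(VTK)$. Two choices of ordering can produce different intermediate positive combinations, but they must agree in $U_n^+(VTK)$ after modding out by the positive R1 and R3 relations; this is a local confluence check on pairs of negative chords whose eliminations commute after virtual braid manipulations on the inserted $v_j$'s. Granting this, $\psi$ descends to a homomorphism on $U_n(VTK)$ provided it kills each R1, R2, R3 relation. The R2 relations are killed tautologically. The R3 relations are killed because the stabilization involves a fresh strand disjoint from every chord being eliminated. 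The main obstacle is descent through R1: when the word $w$ of an R1 relation contains negative crossings, the elimination of such a negative chord may interact with the local triple $\sigma_i\sigma_{i+1}\sigma_i$ or $\sigma_{i+1}\sigma_i\sigma_{i+1}$, and one must carry out a commutation argument using the virtual braid relations to show that applying $\psi$ to both sides yields the same element modulo the positive R1 relations. Once $\psi$ is well defined, the equality $\psi\circ\phi=\text{id}$ is immediate on positive diagrams since no elimination takes place, and so $\phi$ is an isomorphism.
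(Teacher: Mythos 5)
Your overall strategy is the same as the paper's: invert $\phi$ by using the type 2 relations to trade each negative chord for an alternating combination of diagrams with more positive chords, with the runaway terms dying once their chord count exceeds $n$. Indeed, iterating your single rewriting step $[u'v_jv_j\sigma_j^{-1}]=-[u'v_j\sigma_jv_j]-[u'v_j\sigma_j\sigma_j^{-1}]$ at one negative chord reproduces exactly the paper's closed-form substitution $[w\sigma_i^{-1}w']\mapsto\sum_{k=1}^n(-1)^k[w(v_i\sigma_i)^kv_iw']$, and your surjectivity argument is the same computation read inside $U_n(VTK)$.

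The genuine gap is that the heart of the lemma --- the well-definedness of $\psi$ on $U_n(VTK)$ --- is not proved but only flagged: you write ``granting this'' for the confluence of your ordered elimination procedure, and you defer the descent through the R1 relations to ``a commutation argument'' that is never carried out. Those verifications are precisely what the lemma amounts to, so as written the proof is incomplete. Your claim that the R2 relations are killed ``tautologically'' is also too quick in your setup: applying the elimination to the three terms $[w\sigma_i\sigma_i^{-1}]$, $[w\sigma_iv_i]$, $[wv_i\sigma_i^{-1}]$ produces different words, and one must actually check the telescoping cancellation, whose leftover term only vanishes because it has more than $n$ chords. The cleaner route, which is the paper's, is to define $\psi$ directly by the closed substitution above, applied locally at each negative chord; this is manifestly independent of any ordering, so no confluence check is needed, and the remaining verifications are short: every R1, R3, or R2 relation whose distinguished letters do not involve the replaced negative chord has that chord sitting in the common word $w$, so the substitution carries the relation to a linear combination of relations of the same type, while the R2 relation at the replaced chord itself telescopes to a single diagram with more than $n$ chords, hence to zero. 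Recasting your $\psi$ in this closed form and performing these two checks would close the gap; with them, $\psi\phi=\mathrm{id}$ is immediate and $\phi\psi=\mathrm{id}$ follows because the substitution is an R2-consequence in $U_n(VTK)$, as in your surjectivity step.
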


\begin{proof}
The inverse to $\phi$, which we will call $\psi$, can be obtained by replacing each negative chord with an alternating sum of adjacent positive chords. To be more precise, we apply the transformation $$[w\sigma_i^{-1}w']\mapsto \sum_{k = 1}^n(-1)^k[w(v_i\sigma_i)^kv_iw']$$ repeatedly until there are no more negative chords. The type 2 relations for $U_n(VTK)$ map to zero under this transformation, and the other relations are preserved, so it is a well defined homomorphism. It is immediate that $\psi\phi$ is the identity, and $\phi\psi$ is the identity because $[w\sigma_i^{-1}w']$ and $\sum_{k = 1}^n(-1)^k[w(v_i\sigma_i)^kv_iw']$ are always equivalent modulo type 2 relations.
\end{proof}

\begin{dfn}
A \emph{numbered chord diagram} is a chord diagram with directed chords, equipped with a choice of nonnegative integer for each section of the boundary of the diagram between chord endpoints. We define $U_n(NCD)$ to be the group generated by numbered chord diagrams of at most $n$ chords, subject to the three types of relations depicted in figures \ref{type1}, \ref{type2}, and \ref{type3}.
 \begin{figure}[h]
\caption{\label{type1}Relations of type 1. The mirror image of the depicted relation is also a relation.}
\centering
\includegraphics[scale = 0.5]{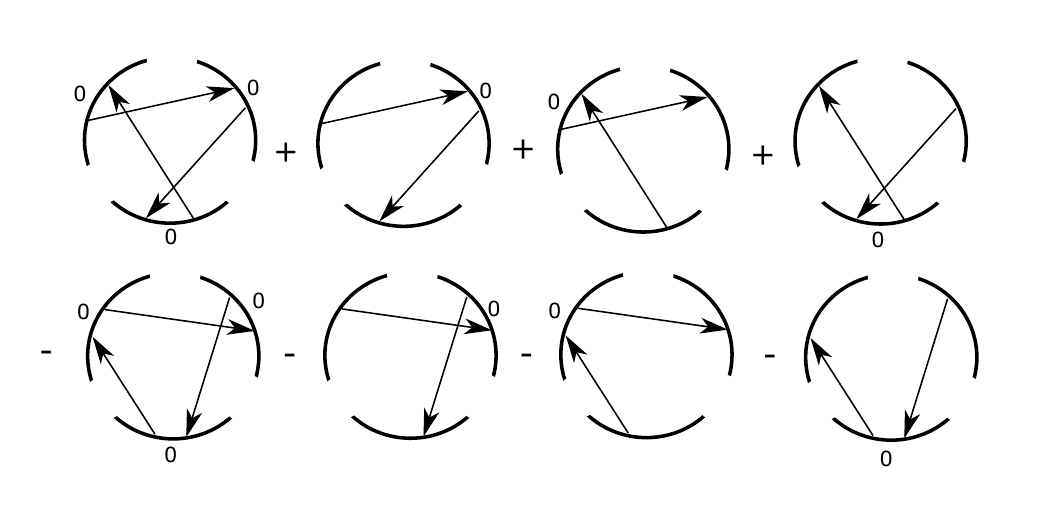}
\end{figure}
 \begin{figure}[h]
\caption{\label{type2}Relations of type 2. Reversing the depicted chord does NOT yield a valid relation.}
\centering
\includegraphics[scale = 0.5]{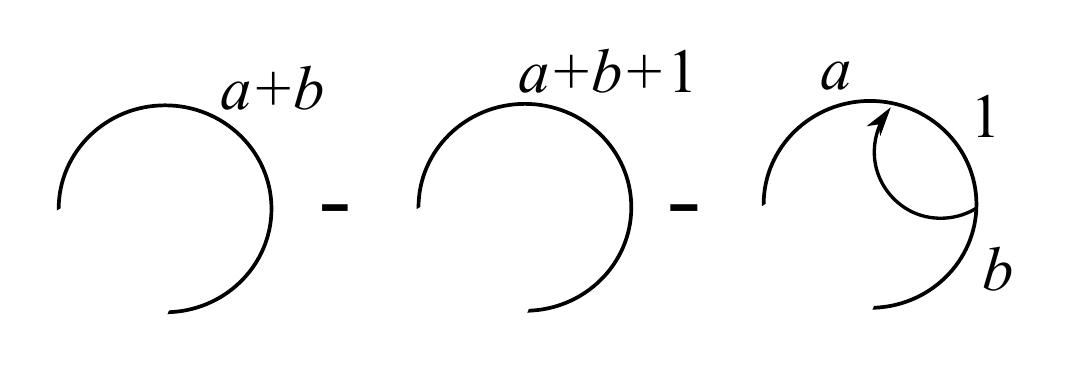}
\end{figure}
 \begin{figure}[h]
\caption{\label{type3}Relations of type 3. In the case where the $a$ and $c$ section, or the $b$ and $d$ sections are the same section of the diagram, the number in that section does not change between the two terms, as in both terms that section gets a single $+1$.}
\centering
\includegraphics[scale = 0.5]{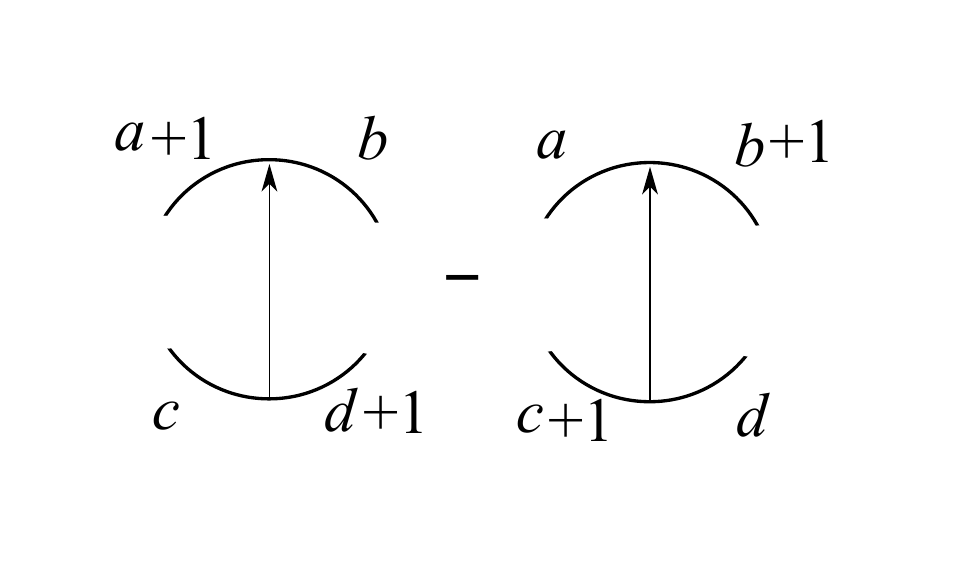}
\end{figure}
\end{dfn}

We can construct a homomorphism $w: U_n^+(VTK)\to U_n(NCD)$ by taking a braided chord diagram $X$, and then choosing a representative of its equivalence class where the $m$-th roots of unity in $S^1$ never coincide with chord endpoints, where $m$ is the braid index. Then, the numbers for the corresponding numbered chord diagram are the number of roots of unity that lie in each of the sections of the boundary of the diagram. Although this numbered chord diagram will not be uniquely determined by the equivalence class of $X$, it will be determined up to type 3 relations in $U_n(NCD)$. This is because whenever we move a chord endpoint past an $m$-th root of unity, we also move the other endpoint of that chord past an $m$-th root of unity, and this corresponds to applying a type 3 relation to the resulting numbered chord diagram. We then see that $w: U_n^+(VTK)\to U_n(NCD)$ is a well-defined homomorphism because it maps type 1 relations in $U_n^+(VTK)$ to type 1 relations in $U_n(NCD)$, and similarly with type 2 relations.

Let $Ab(NCD_{\leq n})$ be the free abelian group on numbered chord diagrams of at most $n$ chords, and let $Ab(CD_{\leq n})$ be the free abelian group on chord diagrams of at most $n$ chords. We will define a map $v: Ab(NCD_{\leq n})\to Ab(CD_{\leq n})$ by the following construction. Take a numbered chord diagram $X$, and label the segments of its boundary $I_1,...,I_{2k}$, where $k$ is the number of chords. Then, we let $a_1,...,a_{2k}$ be the numbers of the numbered chord diagram corresponding to each of those segments. Then, we define $X(b_1,...,b_{2k})$ to be the chord diagram obtained by adding in $b_i$ small counterclockwise pointing isolated chords into the interval $I_i$ for all $i$. If $X(b_1,...,b_{2k})$ has more than $n$ chords, we set it to zero. Finally, we define the map $v: Ab(NCD_{\leq n})\to Ab(CD_{\leq n})$ by the formula. $$v(X) = \sum_{b_1= 0}^n\sum_{b_2= 0}^n...\sum_{b_{2k} = 0}^n \left(\prod_{i = 1}^{2k}f(a_i,b_i)\right) X(b_1,...,b_{2k}) $$

where $f$ is defined recursively by the following requirements.
\begin{itemize}
\item[1)] We have $f(0,b) = C_b$, where $C_b$ is the $b$-th Catalan number.
\item[2)] We have $f(a,0) = 1$ for all $a$.
\item[3)] If $b>0$, then $f(1,b) = 0$.
\item[4)] If $a>1$ and $b>0$, then $f(a,b) = f(a-1,b) - f(a-2,b-1)$.
\end{itemize}

Next, we let $R$ denote the subgroup of $ Ab(NCD_{\leq n})$ generated by the relations of type 1, 2, and 3. Thus, $Ab(NCD_{\leq n})/R = U_n(NCD)$. We define $U_n(CD)$ to be $Ab(CD_{\leq n})/v(R)$. Thus, we have an induced map $v: U_n(NCD)\to U_n(CD)$.

\begin{lem}\label{reduce}
Let $X$ be a braided Gauss diagram of braid index $m$, and let $I$ be an interval between chord endpoints of $X$ such that no $m$-th roots of unity are in $I$. Then, let $X_{i,j}$ denote the braided chord diagram of braid index $m+i+j$ obtained from $X$ by adding $j$ units of distance to $I$ and then stabilizing $i$ times in $I$. We claim that $$ X_{0,i} = \sum_{j = 0}^n f(i,j)X_{j,j+1} $$ in $U_{n}(VTK)$.

Note that $X_{j,j+1}$ is locally unitary in $I$, because we can position the isolated chords from the stabilizations with exactly one root of unity separating each one, as well as one root of unity on either side separating them from the endpoints of $I$.
\end{lem}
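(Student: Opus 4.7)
The plan is to prove the identity by induction on $i$, using the type 3 Reidemeister relation of $U_n(VTK)$ as the fundamental move. In the language of braided Gauss diagrams, this relation says that for any diagram $Y$ and any chosen point in the boundary of $Y$, we have $Y = Y^+ + Y^v$, where $Y^+$ is obtained from $Y$ by inserting a small isolated positive chord at that point (increasing the braid index by one), and $Y^v$ is obtained by simply inserting one unit of empty boundary at that point (also increasing the braid index by one). Specializing the chosen point to lie in $I$, this yields the identity $X_{0, i-1} = X_{1, i-1} + X_{0, i}$, and analogous identities can be obtained by applying the relation in the interiors of the various $X_{a,b}$.

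First I would dispose of the trivial base case $i = 1$: the boundary values $f(1, 0) = 1$ and $f(1, b) = 0$ for $b \geq 1$ reduce the claim to the tautology $X_{0,1} = X_{0,1}$. The case $i = 0$ asserts the nontrivial identity $X = \sum_{j \geq 0} C_j\, X_{j, j+1}$; I would prove it by iterating the type 3 relation starting from $X = X_{0,0}$, applying the move repeatedly at successive points of $I$, and grouping the resulting terms by the locally unitary configuration they produce. The count of terms collapsing to a fixed $X_{j, j+1}$ should turn out to be the Catalan number $C_j$, recovering the standard enumeration of noncrossing matchings. For the inductive step with $i \geq 2$, the type 3 relation gives $X_{0, i} = X_{0, i-1} - X_{1, i-1}$. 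The inductive hypothesis expands $X_{0, i-1} = \sum_j f(i-1, j)\, X_{j, j+1}$, and to rewrite $X_{1, i-1}$ I would treat the inserted chord as splitting the extended $I$ into a chord interior and two flanking regions, then invoke the inductive hypothesis on the right-hand flanking region (viewed as a length-zero subinterval $I'$ of the new diagram $X' := X_{1, 0}$), with a type 3 bookkeeping step to reconcile lengths. This should produce $X_{1, i-1} = \sum_k f(i-2, k)\, X_{k+1, k+2}$. Combining the two expansions and invoking the defining recursion $f(i, j) = f(i-1, j) - f(i-2, j-1)$, extended by the convention $f(i, -1) = 0$ to subsume the boundary case $f(i, 0) = 1$, yields the desired formula.

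The main obstacle will be the $i = 0$ base case, where the Catalan numbers first enter the picture and cannot be bootstrapped from smaller $i$ within this induction. A useful cross-check, should the direct combinatorics prove delicate, is a generating-function computation: setting $F_a(x) = \sum_b f(a, b)\, x^b$, the defining recursion becomes $F_a = F_{a-1} - x\, F_{a-2}$ with $F_0(x)$ equal to the Catalan generating function and $F_1(x) = 1$, which solves explicitly to $F_a(x) = \alpha(x)^{a-1}$ for $\alpha(x) = (1 + \sqrt{1 - 4x})/2$. This closed form can then be matched against the coefficients produced by the Reidemeister-iteration procedure to confirm that no signs or combinatorial factors have been miscounted.
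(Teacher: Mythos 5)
Your proposal follows essentially the same route as the paper's proof: induction on $i$, with the $i=0$ case handled by iterating the stabilization relation $X_{a,b}=X_{a+1,b}+X_{a,b+1}$ and counting the resulting terms (the paper phrases this as monotone lattice paths to $(j,j)$ staying weakly below the diagonal, giving the Catalan coefficient $C_j$), the trivial $i=1$ case, and the inductive step $X_{0,i}=X_{0,i-1}-X_{1,i-1}$ in which the once-stabilized diagram is re-read as a smaller instance of the lemma so that the inductive hypothesis yields $X_{1,i-1}=\sum_k f(i-2,k)X_{k+1,k+2}$ and the recursion $f(i,j)=f(i-1,j)-f(i-2,j-1)$ closes the argument. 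Your sign bookkeeping and the convention $f(i,-1)=0$ for the $j=0$ boundary are correct (the paper effects the same shift via $Y=X_{1,1}$), and the Catalan count you flag as the main obstacle is exactly the paper's path-counting argument, so there is no substantive gap.
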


\begin{proof}
We proceed by induction. For the base case $i = 0$. We see that type 2 relations in $U_n^+(VTK)$ give us $X_{i,j} = X_{i+1,j} + X_{i,j+1}$ for all $i$ and $j$. If we repeatedly apply this relation starting at $X_{0,0}$ and stopping when any given term becomes $X_{j,j+1}$, then we see that the resulting coefficient for $X_{j,j+1}$ will be the number of paths in $\Z^2$, starting at $(0,0)$ and ending at $(j,j)$, always going either up or right, and never going above the diagonal. This is one common definition for the Catalan numbers.
For the case $i = 1$, the equation is trivially true because the sum only has one nonzero term, which is $X_{0,1}$. 

For the inductive step, suppose $i>1$, and suppose the equation is true for all smaller $i$. Then, a type 2 relation gives us $X_{0,i} = X_{0,i-1} + X_{1,i-1}$. Now, let $Y= X_{1,1}$.  Then, position the stabilization in $X_{1,1}$ so that there are no $(m+2)$-th roots of unity between the stabilization and the left endpoint of $I$. Then, using this interval to define $Y_{i,j}$ as we did for $X_{i.j}$, we have that $X_{i,j} = Y_{i-1,j-1}$. Therefore, we have $X_{0,i} = X_{0,i-1} + X_{1,i-1} = X_{0,i-1} + Y_{0,i-2}$. From our inductive assumption, we can now apply the formula to the two latter terms in this equation. We have $$ X_{0,i} = \sum_{j = 0}^n f(i-1,j)X_{j,j+1} +   f(i-2,j)Y_{j,j+1}$$ 
$$ = \sum_{j = 0}^n f(i-1,j)X_{j,j+1} +   f(i-2,j-1)X_{j,j+1} = \sum_{j = 0}^n f(i,j)X_{j,j+1} $$ 
Which proves the lemma.
\end{proof}

\begin{lem}
Earlier we defined $u: Ab(CD_{\geq 0})\to U_n^+(VTK)$, which takes chord diagrams to their unitary representatives. We claim that $u(v(r)) = 0$ for any $r\in R$, and thus there is an indued map $u: U_n(CD)\to U_n^+(VTK)$.
\end{lem}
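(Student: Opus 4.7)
The plan is to first establish an intermediate claim that gives a concrete interpretation of $u(v(X))$ for a numbered chord diagram $X$, and then verify that the three types of relations generating $R$ each map to zero under $u\circ v$ by exploiting this interpretation.

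The intermediate claim I would prove is the following: for any numbered chord diagram $X$ with numbers $a_1, \ldots, a_{2k}$ assigned to its $2k$ boundary intervals, the element $u(v(X)) \in U_n^+(VTK)$ coincides with the braided chord diagram obtained from $X$ by placing $a_i$ roots of unity in the $i$-th interval while leaving the chord structure unchanged. The proof would proceed by applying Lemma \ref{reduce} once to each of the $2k$ boundary intervals in turn. A single application to interval $I_i$ rewrites the diagram as $\sum_{b_i=0}^n f(a_i, b_i)$ times the diagram obtained by adding $b_i$ isolated positive chords in $I_i$ and making it locally unitary there. Since the stabilizations introduced in one interval live strictly inside that interval and do not disturb the root-of-unity count in any other interval, the applications commute, and iterating them matches exactly the definition $v(X) = \sum_{b_1, \ldots, b_{2k}} \prod_i f(a_i, b_i)\, X(b_1, \ldots, b_{2k})$, followed by $u$ sending each summand to its unitary braided chord diagram.

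With the intermediate claim in hand, each relation type can be treated separately. A type 3 (2-term) relation corresponds to simultaneously shifting both endpoints of a chord past adjacent roots of unity, which is by definition an equivalence of braided Gauss diagrams; by the claim, the two sides of such a relation produce the same element of $U_n^+(VTK)$, so their difference vanishes. A type 1 (8-term) relation in $U_n(NCD)$ differs only in a small localized region of the diagram, with the numbers in all intervals outside that region being identical on both sides. Expanding via $v$ therefore gives $\sum_{\text{outer }b}\bigl(\prod_{\text{outer }i} f(a_i, b_i)\bigr)$ times a local 8-term expression that, after applying $u$, is precisely the 8-term Reidemeister-3 relation in $U_n^+(VTK)$ and hence zero in each summand. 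The same argument applies to the type 2 (3-term) NCD relations, which correspond term-by-term to the positive-stabilization relation of $U_n^+(VTK)$.

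The main obstacle is the careful verification of the intermediate claim. Lemma \ref{reduce} is stated for one interval at a time, under the hypothesis that no $m$-th roots of unity lie in that interval, and making the iteration rigorous requires tracking how the braid index, gap structure, and chord data evolve after each step. In particular, one must confirm that a stabilization inserted in $I_i$ does not interfere with a subsequent application of the lemma to a different interval $I_j$, and that the combined coefficient truly factors as $\prod_i f(a_i, b_i)$ rather than acquiring correction terms. Once this bookkeeping is handled, the remainder of the argument is a direct translation between the local structure of each NCD relation and the corresponding relation already present in $U_n^+(VTK)$.
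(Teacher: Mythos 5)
Your intermediate claim is sound in spirit: iterating Lemma \ref{reduce} over the disjoint boundary intervals does show that $u(v(X))$ equals, in $U_n^+(VTK)$, the braided chord diagram with $a_i$ roots of unity in the $i$-th section, and this is exactly the computation the paper itself relies on later when it asserts $uvw=\mathrm{id}$. Your handling of the type 3 and type 1 relations then tracks the paper's proof closely: type 3 becomes a simultaneous slide of both endpoints of a chord past roots of unity, i.e.\ an equivalence of braided chord diagrams, and type 1 collapses (via the $f(0,b)=C_b$ case of Lemma \ref{reduce}) to linear combinations of the eight-term relations of $U_n^+(VTK)$.

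The gap is in your treatment of the type 2 relations. The paper does something genuinely different there: it shows that $v$ \emph{alone} annihilates the type 2 generators, already in $Ab(CD_{\leq n})$, by proving the convolution identity
\begin{equation*}
f(a_1+a_2,b+1) \;=\; f(a_1+a_2+1,b+1) \;+\; \sum_{b_1+b_2=b} f(a_1,b_1)\,f(a_2,b_2)
\end{equation*}
by induction; this is precisely where the recursive definition of $f$ earns its keep, and no appeal to relations of $U_n^+(VTK)$ is needed. Your replacement claim, that the type 2 relations ``correspond term-by-term to the positive-stabilization relation of $U_n^+(VTK)$,'' is not justified and is not literally true for all numberings allowed in Figure \ref{type2}. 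The defining relation $[\sigma_m w]+[v_m w]-[w]=0$ only produces configurations in which the inserted chord sits immediately after a strand junction: one root of unity between its endpoints and at least one root of unity, with no intervening chord endpoints, on one flank. The NCD type 2 relation is imposed for arbitrary nonnegative flanking numbers, including zero (and including intervals containing no roots of unity at all), and such instances are not images of the stabilization relation for any word $w$; their vanishing in $U_n^+(VTK)$ requires a further argument (a realizability statement for words representing a given braided chord diagram, plus a derivation for the degenerate flank-zero cases), none of which you sketch. So either supply that geometric argument in $U_n^+(VTK)$, or follow the paper and verify the identity for $f$ directly; as written, the type 2 case of $u(v(r))=0$ is not established.
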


\begin{proof}
We will check each type of relation, and verify that they indeed map to zero in $U_n^+(VTK)$. 

Using Lemma \ref{reduce}, is easy to see that type 1 relations in $R$ map to zero under $uv$. The designated intervals that have numbering zero in figure \ref{type1} will map under $v$ to linear combinations with Catalan number coefficients, which are then equivalent in $U_n^+(VTK)$ to sections of the boundary where the chords are close to each other. This turns the type $1$ relations of $R$ into linear combinations of type 1 relations of $U_n^+(VTK)$. 

Next, we check the type 2 relations of $R$. We claim these map to zero under $v$. We see that it suffices to show that the recursively defined function $f$ from the definition of $v$ has the following property for all triples of nonnegative integers $(a_1,a_2,b)$. $$ f(a_1+a_2,b+1) = f(a_1+a_2+1,b+1) + \sum_{b_1 + b_2 = b} f(a_1,b_1)f(a_2,b_2)  $$ 

First, note that when $a_1 = 1$ this formula reduces to the recurrence relation for $f$, so it suffices to prove $$ \sum_{b_1 + b_2 = b} f(a_1,b_1)f(a_2+1,b_2) = \sum_{b_1 + b_2 = b} f(a_1+1,b_1)f(a_2,b_2) $$ for all triples of nonnegative integers $(a_1,a_2,b)$. To prove this, we use induction on $b$. Our base case is $b = 0$, in which case all the terms are $1$, so the equation is true. For our inductive step, suppose we know this equation is true for all $b < b_0$. We wish to prove it for $b_0$. We have 
$$ \sum_{b_1 + b_2 = b_0} f(a_1,b_1)f(a_2+1,b_2) - \sum_{b_1 + b_2 = b_0} f(a_1+1,b_1)f(a_2,b_2) $$
$$ =   \sum_{b_1 + b_2 = b_0} f(a_1,b_1)(f(a_2,b_2)- f(a_2-1,b_2-1)) - \sum_{b_1 + b_2 = b_0} (f(a_1,b_1)- f(a_1-1,b_1-1))f(a_2,b_2)  $$
$$ = \sum_{b_1 + b_2 = b_0}f(a_1-1,b_1-1)f(a_2,b_2) -  \sum_{b_1 + b_2 = b_0} f(a_1,b_1)f(a_2-1,b_2-1)  $$
$$ = \sum_{b_1 + b_2 = b_0-1}f(a_1-1,b_1)f(a_2,b_2) -  \sum_{b_1 + b_2 = b_0-1} f(a_1,b_1)f(a_2-1,b_2)   =  0 $$
which completes the inductive step. 

Finally, we wish to prove that type 3 relations $r\in R$ are mapped to zero under $uv$. By Lemma \ref{reduce}, we see that $uv(r)$ can be transformed into a relation where we move a $m$-th root of unity through both the front and back endpoints of a chord, where $m$ is the braid index. This is a valid transformation of braided chord diagrams, simply corresponding to a homotopy of the diagram. Thus, $uv(r)= 0$.
\end{proof}

\begin{lem}
We have defined the following three maps. $$u:U_n(CD)\to U_n^+(VTK),\;\;\;\;w: U_n^+(VTK)\to U_n(NCD),\;\;\;\;v: U_n(NCD)\to U_n(CD)$$ We claim that all three of these maps are isomorphisms, and the compositions $vwu,$ $uvw$, and $wuv$, are all identity maps.
\end{lem}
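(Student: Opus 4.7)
The plan is to show each of the three compositions $vwu$, $uvw$, and $wuv$ is the identity; the iso-ness of $u,v,w$ will then follow automatically, as each will have the composite of the other two as a two-sided inverse. I would verify $vwu = \text{id}_{U_n(CD)}$ by a direct computation: a chord diagram $X$ with $k$ chords maps under $u$ to the unitary braided chord diagram of braid index $2k$ whose $2k$ boundary intervals each contain exactly one root of unity, so $w(u(X))$ is the numbered chord diagram with every number equal to $1$, and
$$v(w(u(X))) = \sum_{b_1,\dots,b_{2k}} \Big(\prod_i f(1,b_i)\Big)\, X(b_1,\dots,b_{2k}).$$
Since the recursion for $f$ gives $f(1,0)=1$ and $f(1,b)=0$ for $b>0$, only the zero tuple contributes and the sum collapses to $X$.

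For $uvw = \text{id}_{U_n^+(VTK)}$, I would take a braided chord diagram $X$ of braid index $m$ whose intervals $I_1,\dots,I_{2k}$ contain $a_1,\dots,a_{2k}$ roots of unity. Then
$$u(v(w(X))) = \sum_{\mathbf{b}} \Big(\prod_i f(a_i,b_i)\Big)\, u(X(\mathbf{b})),$$
where $u(X(\mathbf{b}))$ is the unitary braided chord diagram obtained by inserting $b_i$ isolated chords into $I_i$. To identify this with $X$ inside $U_n^+(VTK)$, I would apply Lemma \ref{reduce} to one interval at a time: when processing $I_j$, the lemma rewrites the local configuration of length $a_j$ with no stabilizations as $\sum_{b_j} f(a_j,b_j)\, X_{b_j,b_j+1}$, which introduces $b_j$ isolated chords and gives $I_j$ unitary spacing. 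Because the moves on different intervals are local and commute, iterating over $j=1,\dots,2k$ produces the stated sum with coefficient $\prod_i f(a_i,b_i)$ on each unitary term, matching $u(v(w(X)))$ term by term.

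The two identities above show that $u$ has $vw$ as both left and right inverse, so $u$ is an isomorphism with $u^{-1}=vw$. To promote this to $w$ being an isomorphism I would show $w$ is surjective: any numbered chord diagram $Y$ whose numbers $c_1,\dots,c_{2k}$ sum to $m>0$ is $w$ applied to the braided chord diagram of braid index $m$ whose chord endpoints are positioned so that $I_i$ contains exactly $c_i$ of the $m$-th roots of unity (the degenerate all-zero case is handled by first using a type 3 relation to raise some number). Combined with injectivity of $w$ from $u^{-1}=vw$, this makes $w$ an isomorphism; then $wuv=\text{id}$ follows by right-multiplying $uvw=\text{id}$ by $w^{-1}$, and $v = u^{-1}w^{-1}$ is automatically an isomorphism. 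The main obstacle is the combinatorial bookkeeping in the second step: one must verify that iterated application of Lemma \ref{reduce} across all $2k$ intervals produces the correct product coefficient $\prod_i f(a_i,b_i)$ for each summand, and that the reductions on distinct intervals genuinely commute so the final unitary representative matches $u(v(w(X)))$ on the nose. The recursion defining $f$ is engineered precisely to make this match work.
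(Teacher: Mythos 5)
Your computations of the two composites match the paper's: $vwu=\mathrm{id}$ because $f(1,0)=1$ and $f(1,b)=0$ for $b>0$, and $uvw=\mathrm{id}$ by applying Lemma \ref{reduce} interval by interval; that part is fine and is essentially the argument the paper gives. Where you diverge is the final upgrade to ``all three maps are isomorphisms'': the paper gets the missing third piece by arguing that $v$ is an isomorphism directly from the definition $U_n(CD)=Ab(CD_{\leq n})/v(R)$ together with surjectivity of $v$ on the free groups (all-ones numbered diagrams hit every chord diagram), whereas you instead try to prove $w$ is surjective by explicit realizability. That step has a genuine gap. In a braided Gauss diagram the two endpoints of a chord satisfy $x^m=y^m$, so they sit at integer distance at least $1$ apart in the metric of total length $m$; consequently \emph{every} chord of a braided chord diagram has at least one $m$-th root of unity strictly on each of its two sides. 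Hence the numbered chord diagrams of the form $w(\text{generator})$ are exactly those in which, for each chord, the numbers on each side of that chord have positive sum. Your claimed construction therefore fails not only for the all-zero diagram but for any numbered chord diagram in which some chord has all zeros on one side (e.g.\ a single chord numbered $(0,m)$), because the required endpoint placement would force the two endpoints of that chord into the same unit gap, contradicting $x^m=y^m$ with $x\neq y$.

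Your proposed repair via a type 3 relation also cannot work: the relations of Figure \ref{type3} equate two diagrams each obtained from a common base by adding $+1$'s around a single chord, so both terms have the same chord count and the same total, and for the problematic configurations (in particular the one-chord case, where the $a,c$ and $b,d$ sections coincide) the relation is vacuous; type 3 moves never raise a zero total on one side of a chord. The correct tool is the type 2 relations (Figure \ref{type2}), i.e.\ the local identity $X_{i,j}=X_{i+1,j}+X_{i,j+1}$ underlying Lemma \ref{reduce}: a section with a too-small number can be traded for the same diagram with the number incremented plus a diagram with an extra isolated chord, and an induction on the number of chords (terms with more than $n$ chords vanish) then shows every numbered chord diagram lies in the image of $w$ inside $U_n(NCD)$. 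With that repair your route goes through; alternatively, you can sidestep surjectivity of $w$ entirely by following the paper and establishing that $v:U_n(NCD)\to U_n(CD)$ is an isomorphism from the way $U_n(CD)$ was defined as $Ab(CD_{\leq n})/v(R)$, after which $w=v^{-1}u^{-1}$ and $wuv=\mathrm{id}$ follow formally as you indicate.
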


\begin{proof}
$v: U_n(NCD)\to U_n(CD)$ is an isomorphism because $U_n(CD)$ was defined as the quotient $Ab(CD_{\leq n})/v(R)$, and the map $v: Ab(NCD_{\leq n}) \to Ab(CD_{\leq n})$ is surjective since it takes numbered chord diagrams where all the numbers are one to their corresponding chord diagrams.

Thus, we only have to check that the compositions $vwu$ and $uvw$ are identities. The map $vwu$ can easily be seen to be an identity because it takes a chord diagram to its unitary representative, which then maps to a numbered chord diagram where all the numbers are one, which then maps back to the original diagram. Finally, $uvw$ is the identity by Lemma \ref{reduce}. We map a braided chord diagram to the numbered chord diagram that counts roots of unity in each boundary section, which then maps to a linear combination of unitary diagrams which can be reduced by the relations of $U_n^+(VTK)$ to the original diagram.
\end{proof}

We can now prove Theorem \ref{surj}. 
\begin{proof}[Proof of Theorem \ref{surj}.]
We know that the map $Ab(CD_{\leq n})\to U_n(VTK)$ is surjective because we have proven that the induced map $U_n(CD)\to U_n(VTK)$ is an isomorphism. 
\end{proof}

Thus, we finally know that $U_n(VTK)$ is finitely generated, because it is isomorphic to the finitely generated abelian group $U_n(CD)$. However, if we actually want to compute this abelian group, we also need a finite set of relations. The problem is that the relations of $U_n(CD)$ are defined to be $v(R)$ where $R$ is the infinite dimensional space of relations in $U_n(NCD)$. We need to find a finite dimensional subspace $\tilde{R}\subseteq R$ so that $v(R) = v(\tilde{R})$. One obvious such choice is the following.

\begin{dfn}
Let $\tilde{R}$ be the subspace of $R$ generated by the following subset of relations.
\begin{itemize}
\item[1)] Type 1 relations of $R$ for which all numbers in the diagram are $1$, except where they are specified to be zero in Figure \ref{type1}.
\item[2)] Type 3 relations of $R$ for which at most two of the numbers in the diagrams are $0$, and the rest are $1$. 
\end{itemize}
\end{dfn}

Thus, $Ab(CD_{\leq n})/v(\tilde{R})$ gives us a finite presentation for the abelian group $U_n(VTK)$. It is this presentation that we used in our computer program to compute $U_n(VTK)\otimes(\Z/p\Z)$.

\newpage
\nocite{*}
\bibliography{Refrences}{}
\bibliographystyle{plain}

\end{document}